\tikzset{
    segment/.style={line width=1.3pt},
    vertex/.style={
        inner sep=0.7mm,circle,draw,line width=1.5pt,fill=white
    },
    dsegment/.style={segment, dash pattern = on 5pt off 2pt},
    edge/.style={
        line width=1.3pt,
        decoration={
            markings,
            mark=at position 0.6 with {\arrow{>}}
        }
    },
    cylinder/.style = {line width=0.7pt,dash pattern = on 5pt off 5pt},
    graphfill/.style = {gray!50},
}
\theoremstyle{definition}
\newtheorem{remark}{Remark}
\newtheorem{example}{Example}
\theoremstyle{plain}
\newtheorem{theorem}{Theorem}[section]
\newtheorem{corollary}[theorem]{Corollary}
\newtheorem{lemma}[theorem]{Lemma}
\newtheorem{prop}[theorem]{Proposition}
\newtheorem{claim}[theorem]{Claim}
\newcommand{\mL}{\mathcal{L}}
\newcommand{\mV}{\mathcal{V}}
\newcommand{\mE}{\mathcal{E}}
\newcommand{\td}{\mathrm{td}}
\newcommand{\Id}{\mathrm{Id}}
\definecolor{mediumtealblue}{rgb}{0.0, 0.33, 0.71}
\definecolor{caribbeangreen}{rgb}{0.0, 0.8, 0.6}
\definecolor{capri}{rgb}{0.0, 0.75, 1.0}
\newcommand{\isom}{\mathrm{Isom}}
\title{Global Rigidity of Line Constrained Frameworks}
\author{
James Cruickshank 
\and Fatemeh Mohammadi \and Harshit J. Motwani \and Anthony Nixon \and Shin-ichi Tanigawa}
\date{}
\begin{document}

\maketitle

\noindent{\bf Abstract.} %\begin{abstract}
We consider the global rigidity problem for bar-joint frameworks where each vertex is constrained to lie on a particular line in $\mathbb R^d$. In our setting we allow multiple vertices to be constrained to the same line. 
We give a combinatorial characterisation of generic rigidity in this setting for arbitrary line sets. Further,
under a mild assumption on the given set of lines, we give a complete combinatorial characterisation of graphs that are generically globally rigid. 
This gives a $d$-dimensional extension of the well-known combinatorial characterisation of 2-dimensional global rigidity. In particular, our results imply that global rigidity is a generic property in this setting. 

\medskip
\noindent{\thanks{2020 {\it  Mathematics Subject Classification:}
52C25, 05C10\\
Key words and phrases: bar-joint framework, global rigidity, line constrained framework}}

\section{Introduction}

Consider a discrete geometric structure consisting of a collection of points subject to a system of constraints specifying the distances between points. Such structures are often modelled by graphs with vertices corresponding to points and edges to fixed length line segments. A natural question is to what extent the given structure is unique given the topology of the underlying graph and the specified lengths. The structures are usually referred to as bar-joint frameworks and the question is then the well studied global rigidity problem (see, e.g.,~\cite{JW}).

In particular, while determining global rigidity of a given framework even on the line is NP-hard \cite{Saxe}, a folklore result says that, generically, global rigidity on the line is equivalent to the graph being 2-connected. We will extend this result to higher dimensions giving one of a very small number of combinatorial results for generic global rigidity in arbitrary dimension (see \cite{CJWbb,CJTman,JKTbh} for the others). The general $d$-dimensional case is known to depend only on the underlying graph \cite{GHT} and the case when $d=2$ was resolved in \cite{J&J} but, in general, giving a graph theoretic characterisation is a central challenge in rigidity theory.

Initially, the global rigidity problem arose from attempts to understand configurations of molecules \cite{hendrickson}. More recently global rigidity has found a broader range of applications, for example in localizing networks from partial observations of inter-point distances, see, e.g.~\cite{JJsensor}. In such applications,  it is natural to assume that some points have a fixed location, or perhaps are fixed to move on the ground, or a given wall. Such external boundary conditions can be incorporated into the constraints for a bar-joint framework as additional linear constraints (restriction to move on a fixed hyperplane) on the points \cite{CGJN,GJN,Bill-Tony-Shinichi,Katoh-Tanigawa,ST}. 
In this linearly constrained context, global rigidity has been characterised generically in the plane \cite{GJN} in purely combinatorial terms. However, little is known combinatorially in dimension greater than 2 with, or without, linear constraints. 

In this paper we address the global rigidity problem of bar-joint frameworks whose points are constrained to move on lines. Our research generalises that of \cite{GJN} in two directions. Firstly we will work in arbitrary dimensions and consider bar-joint frameworks constrained to a fixed system of lines. Secondly we will substantially weaken the genericity hypothesis by imposing no restriction on how many points are constrained to any given line. 
This generalization gives us a $d$-dimensional extension of the aforementioned  combinatorial characterisation of $1$-dimensional global rigidity.

The paper is organized as follows.
In Section~\ref{sec:prelim}, we will formally introduce our line constrained rigidity setting and give a characterisation of generic rigidity in this context in Section~\ref{sec:rigidity}. (Rigidity, while fundamental, is a weaker property than global rigidity where uniqueness is only required in a neighbourhood of the given framework.) This characterisation extends, to the general case, a result obtained for line constrained frameworks in \cite{CGJN} since there only 2 points were allowed to lie on each line. We then prove exact necessary conditions for a generic line constrained framework to be globally rigid in Section~\ref{sec:nec}; these results are analogous to the well known Hendrickson conditions for global rigidity of a bar-joint framework \cite{hendrickson}. The remainder of the paper is devoted to showing these necessary conditions are also sufficient, and thereby giving a full combinatorial description of global rigidity. We achieve this by an inductive proof strategy, the geometric aspect of which is to prove that particular graph operations preserve generic global rigidity; this is the content of Section~\ref{sec:exten}. In Section~\ref{sec:small}, we then establish global rigidity for small graphs of three fundamental topological types. These results can then be combined with a combinatorial reduction step, in Section~\ref{sec:main}, to complete the proof of our characterisation.

\section{Preliminaries}

\label{sec:prelim}
\subsection{Line constrained frameworks}
Any line in $\mathbb R^d$ has a unique standard equation of the form $Ax=b$ where $(A,b)$ is a  
$(d-1) \times (d+1)$ matrix in reduced row echelon form. 
Throughout the rest of the paper, $k$ is some positive integer and $\mL = \{L_i: 1 \leq i \leq k \}$ will be a set of lines in $\mathbb R^d$. 
Suppose that $A_i x = b_i$ is the standard representation of the line $L_i$ as above. 
We say that $\mL$ is {\em parallel} if $A_i = A_j$ for all $1 \leq i,j \leq k $ and {\em non-parallel} otherwise.
Let $X$ be the union of the sets of entries of $(A_i,b_i)$ for $1\leq i \leq k$ and 
define $\mathbb Q(\mL)$ to be the smallest subfield of $\mathbb R$ containing $X$. 

A {\em partitioned graph}  is a finite graph $G = (\mV, \mE)$ together 
with a partition of the vertex set $\mV =  \cup_{i =1}^k V_i$. Note that we allow $V_i$ to be empty here. 
Also in contrast to a multipartite graph, the induced edge set on $V_i$ can 
be non-empty. 
We say that a subset $W \subset \mV$ is {\em crossing} if $W \cap V_i$ is non-empty for at least two distinct values of $i$.  We say that a graph or an edge 
is {\em crossing} if its set of vertices is crossing.

Given a line set $\mL$ in $\mathbb R^d$, an {\em $\mL$-constrained framework} is a pair $(G,p)$ where
$G$ is a partitioned graph and $p:\mV \rightarrow \mathbb R^d$ satisfies $p(V_i) \subset L_i$ for $1 \leq i \leq k$.
In this situation, if $u \in V_i$ then define $L_u$ to be $ L_i$.
Let $Y$ be the set of real numbers that appear as a coordinate in some $p(v)$ and define $\mathbb Q(\mL,p)$ to be the smallest subfield of $\mathbb R$ that contains both $\mathbb Q(\mL)$ and $Y$. We say that $(G,p)$ (or $p$) is {\em $\mL$-generic} if 
$\td(\mathbb Q(\mL,p):\mathbb Q(\mL)) = |\mV|$.

\subsection{Isometries on lines}
In the context of framework rigidity, the group of  isometries of the ambient space is important since  such isometries induce trivial 
flexes of a framework.
In this subsection we show basic properties of the isometry group on lines.

Let $\mL$ be a set of lines in $\mathbb{R}^d$.
An {\em isometry of $\mL$} is a map $\theta:\cup_{L \in \mL} L \rightarrow \cup_{L \in \mL} L$, 
such that $|\theta(x) - \theta (y) | = |x-y|$ for
all $x,y \in \cup_{L \in \mL} L$ and such that 
$\theta(L) = L$ for all $ L \in \mL$. In other 
words it is an isometry of the underlying induced metric on $\cup_{L \in \mL}L$ that induces the identity permutation on the set of lines in $\mL$.
Let $\isom(\mL)$ be the group of isometries of $\mL$. 
The following elementary observations will be used repeatedly in the sequel.

\begin{lemma}
\label{lem_twoset}
Suppose that $\mL = \{L_1,L_2\}$ is a set of two lines in $\mathbb{R}^d$, where $L_1, L_2$ are neither parallel nor perpendicular. %Then
\begin{enumerate}
\item[{\rm (1)}] There is a unique pair $(x_1, x_2)$ of points $x_i\in L_i\ (i=1,2)$ such that $x_i$ is the closest point in $L_i$ to $L_{3-i}$.
Moreover, the coordinates of the point $x_i$ lie in $\mathbb Q(\mL)$. 
\item[{\rm (2)}]  If $\theta \in \isom(\mL)$ is a non-identity element then for $i=1,2$, $\theta|_{L_i}$ is a reflection in $x_i$.
\item[{\rm (3)}]  $\isom(\mL)$ is cyclic of order $2$. 
\end{enumerate}
\end{lemma}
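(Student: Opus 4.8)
The plan is to reduce all three parts to a single quadratic computation carried out in convenient coordinates adapted to the closest pair. For part~(1) I would first fix, for each $i$, a nonzero direction vector $v_i$ spanning $\ker A_i$ and a base point $a_i$ with $A_i a_i = b_i$; since $(A_i,b_i)$ has entries in $\mathbb Q(\mL)$, elementary linear algebra over the field $\mathbb Q(\mL)$ produces such $v_i,a_i$ with coordinates in $\mathbb Q(\mL)$. Writing $x_i = a_i + t_i v_i$ and minimising $|x_1-x_2|^2$ yields the normal equations $v_i\cdot(x_1-x_2)=0$ for $i=1,2$, a $2\times 2$ linear system in $(t_1,t_2)$ whose determinant is $(v_1\cdot v_2)^2 - |v_1|^2|v_2|^2$; by Cauchy--Schwarz this vanishes exactly when the lines are parallel, so non-parallelism gives a unique critical pair, which is the global minimiser. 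The same normal equations say precisely that $x_1$ is the nearest point of $L_1$ to $L_2$ and $x_2$ the nearest point of $L_2$ to $L_1$, and since the system has coefficients in $\mathbb Q(\mL)$ its solution, hence $x_1,x_2$, lies in $\mathbb Q(\mL)$.

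For part~(2) I would re-coordinatise each line by arc length centred at $x_i$, identifying $L_i$ with $\mathbb R$ via $t\mapsto x_i + t u_i$ for a unit direction $u_i$. The geometric input from part~(1) is the orthogonality $n := x_1-x_2 \perp u_1,u_2$, which makes the cross-line squared distance a clean quadratic: $|(x_1+su_1)-(x_2+tu_2)|^2 = |n|^2 + s^2 + t^2 - 2c\,st$, where $c = u_1\cdot u_2$ satisfies $c\notin\{0,\pm 1\}$ precisely because $L_1,L_2$ are neither perpendicular nor parallel. Any $\theta\in\isom(\mL)$ restricts on $L_i$ to a one-dimensional isometry $t\mapsto \epsilon_i t + c_i$ with $\epsilon_i\in\{\pm 1\}$. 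Substituting and demanding equality of squared distances for all $s,t$, I would match coefficients: the $st$-coefficient forces $\epsilon_1\epsilon_2 = 1$ (this is where $c\neq 0$ is used), while the linear coefficients give $c_1 = c\,c_2$ and $c_2 = c\,c_1$, whence $c_1=c_2=0$ (this is where $c^2\neq 1$ is used). Thus $\theta|_{L_i}$ is $t\mapsto \epsilon t$ for a common sign $\epsilon$, and a non-identity $\theta$ forces $\epsilon=-1$, which is exactly the reflection in $x_i$.

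Part~(3) then follows quickly. The computation shows the only candidate non-identity element is the simultaneous reflection in $x_1$ and in $x_2$, so $|\isom(\mL)|\le 2$; conversely the coefficient conditions are also sufficient, so this reflection genuinely preserves all distances and fixes each line setwise, hence lies in $\isom(\mL)$, and it is clearly an involution. Therefore $\isom(\mL)$ has order exactly $2$ and is cyclic. One small bookkeeping point is the case where $L_1,L_2$ intersect: then the minimum distance is $0$ and $x_1=x_2$ is the intersection point, so the two reflections agree there and $\theta$ is well defined on $L_1\cup L_2$; the quadratic-form argument is unaffected, it merely has $n=0$.

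The only genuine subtlety, and the step I would be most careful about, is the coefficient-matching in~(2), since this is exactly where both hypotheses are consumed: dropping non-perpendicularity ($c=0$) would decouple $\epsilon_1,\epsilon_2$ and enlarge the group to order $4$, while dropping non-parallelism ($c=\pm 1$) would both destroy the uniqueness in~(1) and fail to pin down $c_1,c_2$. Everything else is routine once the orthogonal decomposition furnished by part~(1) is in place.
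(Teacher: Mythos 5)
Your proof is correct, and while part~(1) matches the paper's argument (minimise the cross-line quadratic, solve the resulting linear system over $\mathbb Q(\mL)$, invoke Cramer's rule), your treatment of parts~(2) and~(3) takes a genuinely different route. The paper argues geometrically: by strict minimality, any $\theta\in\isom(\mL)$ must map the unique closest pair to itself, hence fixes $x_1$ and $x_2$, so each restriction $\theta|_{L_i}$ is the identity or the reflection in $x_i$; non-perpendicularity then couples the two restrictions (identity on one line forces identity on the other), and existence of the non-trivial element is obtained by restricting the ambient half-turn of $\mathbb R^d$ about the axis through $x_1$ and $x_2$. You instead parametrise both lines by arc length centred at $x_1,x_2$, write a general element as $t\mapsto \epsilon_i t + c_i$ on each line, and match coefficients of the cross-line quadratic $|n|^2+s^2+t^2-2c\,st$: the $st$-term couples the signs (using $c\neq 0$, i.e.\ non-perpendicularity) and the linear terms force $c_1=c_2=0$ (using $c^2\neq 1$, i.e.\ non-parallelism), with existence coming for free since the matched conditions are also sufficient. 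The paper's argument is shorter and its key idea (isometries must preserve the unique closest pair) is reused later for three or more lines in the proof of Lemma~\ref{lem_isomgroup}; your computation is more self-contained, makes completely explicit where each hypothesis is consumed, avoids appealing to an ambient rigid motion of $\mathbb R^d$ (the paper's half-turn about ``the axis through $x_1$ and $x_2$'' silently needs $x_1\neq x_2$, whereas you handle the intersecting case explicitly), and exhibits directly how the group degenerates to order $4$ or to the $1$-dimensional Euclidean group when the respective hypotheses are dropped.
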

\begin{proof}
Consider the quadratic polynomial $f: L_1 \times L_2 \rightarrow \mathbb R, f(y_1,y_2) = \|y_1-y_2\|^2$.
Since $L_1, L_2$ are not parallel, $f$ is a strictly convex quadratic function and hence has a unique minimiser  $(y_1,y_2) = (x_1,x_2)$.
Therefore $(x_1,x_2)$ is the unique solution to a linear system of equations with coefficients in $\mathbb Q(\mL)$ and statement ${\rm(1)}$ follows from Cramer's rule.

The strict minimality of $f$ implies that any element $\gamma$ of $\isom(\mL)$ must fix $x_1,x_2$. Hence, $\gamma$ restricted to each $L_i$ must be either the identity or the reflection about $x_i$.
The half-turn rotation of $\mathbb{R}^d$ about an axis through $x_1$ and $x_2$ gives a non-identity element $\gamma$ of  $\isom(\mL)$ by restriction.
Also if any element $\gamma$ of $\isom(\mL)$ restricted to $L_i$ is the identity, it must also be the identity on $L_{3-i}$ since $L_1$ and $L_2$ are not perpendicular. These imply (2) and (3). 
\end{proof}

We note also that if $\mL$ is  parallel then 
$\isom(\mL)$ is isomorphic to the Euclidean group of isometries of $\mathbb R$. If $L_1,L_2$ are perpendicular then $\isom(\{L_1,L_2\}) \cong \mathbb Z/2 \times \mathbb Z/2$.
In particular, if $\mL$ is not parallel then $\isom(\mL)$ is
finite.

The situation is more straightforward if the set of lines are non-degenerate in a sense that we now define. 
Three (non-parallel) lines $L_1, L_2, L_3$ in $\mathbb{R}^d$ are {\em weakly concurrent} if 
the closest point  in $L_1$ to $L_2$ coincides with
the closest point  in $L_1$ to $L_3$.
Note that when $d=2$, $L_1, L_2, L_3$ are weakly concurrent if and only if they are concurrent.
We say that a set $\mL$ of lines is in {\em general position} if 
\begin{itemize}
\item no two lines in $\mL$ are parallel,
\item no two lines in $\mL$ are perpendicular, and
\item no three lines in $\mL$ are weakly concurrent.
\end{itemize}

\begin{lemma}
\label{lem_isomgroup}
Suppose that $\mL$ is in general position.
If $|\mL|=1$, then $\isom(\mL)$ is isomorphic to the $1$-dimensional Euclidean group. If $|\mL|=2$ then $\isom(\mL)$ is cyclic of order two and the non-trivial element is a rotation about a common perpendicular of the two lines. If $|\mL|\geq 3$, then $\isom(\mL)$ is trivial. 
\end{lemma}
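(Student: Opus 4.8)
The plan is to prove the three cases separately, leveraging Lemma~\ref{lem_twoset} for the $|\mL|=2$ case and building the $|\mL|\geq 3$ case on top of it. For $|\mL|=1$, the single line $L$ is isometric to $\mathbb{R}$ and any isometry fixing $L$ setwise is simply an isometry of the underlying $1$-dimensional metric space; I would note that such a map is determined by an orientation-preserving translation or an orientation-reversing reflection, so $\isom(\mL)$ is precisely the $1$-dimensional Euclidean group. This case requires only identifying $L$ with $\mathbb{R}$ via an arc-length parametrization.

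For $|\mL|=2$, the two lines are non-parallel and non-perpendicular (since $\mL$ is in general position), so Lemma~\ref{lem_twoset} applies directly and gives exactly the claim: $\isom(\mL)$ is cyclic of order two, with the nontrivial element acting as a reflection on each $L_i$ about the closest point $x_i$. I would then observe that the half-turn rotation realizing this element has its axis passing through $x_1$ and $x_2$, and that the segment $[x_1,x_2]$ is the common perpendicular of $L_1$ and $L_2$ (since $x_i$ is the foot of the shortest segment between the lines, the segment $x_1 x_2$ meets both lines orthogonally). Hence the nontrivial element is the rotation about this common perpendicular.

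The main case, and the one I expect to be the crux, is $|\mL|\geq 3$. Here I would argue that the general position hypothesis forces any isometry $\theta$ to be the identity. The key idea is to reduce to the two-line case: pick any two lines $L_i, L_j$ and apply Lemma~\ref{lem_twoset}, which says $\theta|_{L_i}$ and $\theta|_{L_j}$ are each either the identity or the reflection about the respective closest point, and moreover these two choices are linked (either $\theta$ is the identity on both or the reflection on both). Suppose for contradiction that $\theta$ is nontrivial; then restricting to $L_1, L_2$ shows $\theta|_{L_1}$ is the reflection about $x_{12}$, the closest point in $L_1$ to $L_2$. Restricting instead to $L_1, L_3$ shows $\theta|_{L_1}$ is the reflection about $x_{13}$, the closest point in $L_1$ to $L_3$. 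But a line has at most one reflection-center giving a fixed nontrivial reflection, so $x_{12} = x_{13}$, which says precisely that $L_1, L_2, L_3$ are weakly concurrent, contradicting general position.

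The one subtlety to handle carefully is ruling out the \emph{mixed} possibility where, say, $\theta|_{L_1}$ is the identity while $\theta|_{L_2}$ is a nontrivial reflection; this is where the non-perpendicularity clause of general position is essential. By the final assertion in the proof of Lemma~\ref{lem_twoset}, if $\theta$ restricts to the identity on one of a non-perpendicular pair of lines then it is the identity on the other as well. So if $\theta|_{L_1}=\mathrm{id}$, applying this to each pair $\{L_1, L_j\}$ forces $\theta|_{L_j}=\mathrm{id}$ for all $j$, whence $\theta=\mathrm{id}$. Thus a nontrivial $\theta$ must act as a nontrivial reflection on every line simultaneously, and then the weak-concurrency contradiction above applies. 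This completes the argument that $\isom(\mL)$ is trivial when $|\mL|\geq 3$.
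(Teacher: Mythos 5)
Your proof is correct and follows essentially the same route as the paper: the cases $|\mL|=1,2$ are read off from Lemma~\ref{lem_twoset}, and for $|\mL|\geq 3$ a non-trivial $\theta$ is restricted to the pairs $\{L_1,L_2\}$ and $\{L_1,L_3\}$, forcing the two closest points on $L_1$ to coincide and contradicting the weak non-concurrency in the general position hypothesis. Your extra care with the ``mixed'' case (identity on one line, reflection on another) is a detail the paper elides by choosing $L_1$ to be a line on which $\theta$ is non-trivial from the outset, and it follows already from the statement of Lemma~\ref{lem_twoset}(2) rather than needing its proof.
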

\begin{proof}
The cases $|\mL| =1,2$ follow immediately from Lemma~\ref{lem_twoset}. Suppose that $|\mL| \geq 3$ and, for a contradiction, suppose that  
$\theta \in \isom(\mL)$ is non-trivial. So there is some
$L_1 \in \mL$ such that $\theta|_{L_1}$ is not the identity.
Choose $L_2,L_3 \in \mL$ such that $L_1,L_2,L_3$ are pairwise distinct. Let $x\in L_1$ be the closest point to $L_2$. By Lemma~\ref{lem_twoset}, $x$ is also the closest point to $L_3$. However this contradicts the fact that $\mL$ is in general position.
\end{proof}

We next state, in a form  suitable for our purposes, one of the fundamental theorems of real algebraic geometry. 
Suppose that $\mathbb F$ is a subfield of $\mathbb R$. Let $X$ be a semi-algebraic subset of $\mathbb R^d$. We say that $X$ is {\em defined over $\mathbb F$} if there is a set of defining polynomial equations and inequalities for $X$ that have coefficients in $\mathbb F$. 

\begin{theorem}[Tarski \cite{Tarski}, Seidenberg \cite{Sei}] 
    Suppose that $\mathbb F$ is a subfield of $\mathbb R$ and that 
    $X \subset \mathbb R^d$ is a semi-algebraic set defined over $\mathbb F$.
    Suppose that $f: \mathbb R^d \rightarrow \mathbb R^c$ is a polynomial map with coefficients in $\mathbb F$. Then 
    $f(X)$ is a semi-algebraic set defined over $\mathbb F$.
    \label{thm_tarskiseidenberg}
\end{theorem}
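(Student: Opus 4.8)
The plan is to reduce this statement, which is the projection (equivalently, quantifier-elimination) form of the Tarski--Seidenberg theorem, to the fundamental case of eliminating a single variable, and then to invoke the algebraic machinery of subresultants while tracking carefully that every auxiliary polynomial produced has coefficients in $\mathbb F$. The first step is to replace the image of the polynomial map by a coordinate projection. Consider the graph
$$\Gamma = \{(x,y) \in \mathbb R^d \times \mathbb R^c : x \in X \text{ and } y = f(x)\}.$$
Since $X$ is defined over $\mathbb F$ and $f$ has coefficients in $\mathbb F$, the equations $y_j = f_j(x)$ together with the defining (in)equalities of $X$ exhibit $\Gamma$ as a semi-algebraic set defined over $\mathbb F$. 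As $f(X) = \pi(\Gamma)$, where $\pi:\mathbb R^{d}\times\mathbb R^c \to \mathbb R^c$ is the coordinate projection, it suffices to prove that the image of a semi-algebraic set over $\mathbb F$ under a coordinate projection is again semi-algebraic over $\mathbb F$. Factoring $\pi$ into single-coordinate projections reduces everything to the case $\pi:\mathbb R^{n+1}\to\mathbb R^n$, $(x,t)\mapsto x$.

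Next I would put the set into a normal form. Writing the semi-algebraic set $S\subset\mathbb R^{n+1}$ in disjunctive normal form and using that projection commutes with finite unions, it is enough to treat a basic set
$$S = \{(x,t) : g_1(x,t)=\dots=g_a(x,t)=0,\ h_1(x,t)>0,\dots,h_b(x,t)>0\},$$
with all $g_i,h_j\in\mathbb F[x_1,\dots,x_n,t]$. Regarding each $g_i$ and $h_j$ as a polynomial in $t$ whose coefficients are polynomials in $x$ over $\mathbb F$, the fibre over a fixed $x$ is a subset of $\mathbb R$ cut out by finitely many univariate sign conditions, and the task is to describe exactly those $x$ for which this fibre is non-empty.

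The heart of the argument, and the step I expect to be the main obstacle, is to show that non-emptiness of the fibre is equivalent to a Boolean combination of sign conditions on polynomials in $x$ over $\mathbb F$. The real roots of the $g_i$ and $h_j$ (in $t$) subdivide the line into intervals on which every $g_i,h_j$ has constant sign, and the fibre is non-empty precisely when some root, or some sample point between consecutive roots, realises the required signs. The number and relative order of these roots, and the signs of the polynomials at them, are governed by subresultant and Sturm--Habicht sequences, resultants, and discriminants of the $g_i,h_j$. Crucially, each such object is a fixed integer-coefficient polynomial in the coefficients of its inputs, hence a polynomial in $x$ with coefficients in $\mathbb F$; a case analysis on the vanishing of the successive leading coefficients, needed to account for drops in the $t$-degree, reduces everything to finitely many such sign conditions. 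Assembling these yields a description of $\pi(S)$ by polynomial equalities and inequalities over $\mathbb F$, and preservation of the field is then automatic, since only ring operations on the coefficients are ever performed. The delicate bookkeeping lies entirely in verifying that this finite list of sign conditions genuinely captures the projection in every degenerate configuration.

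Finally, I note that a cleaner conceptual route to the same conclusion is cylindrical algebraic decomposition: from the defining polynomials one constructs a finite partition of $\mathbb R^{n+1}$ into cells on which all the $g_i,h_j$ have constant sign, projects this cell structure to $\mathbb R^n$, and reads off $\pi(S)$ as a union of base cells. Since the projection and lifting phases of the construction again use only resultants, subresultants and gcd computations over the coefficient ring, the entire decomposition, and hence the defining data of $\pi(S)$, remains defined over $\mathbb F$.
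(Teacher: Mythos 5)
The paper does not actually prove this statement: it is quoted as a known theorem of Tarski and Seidenberg, and the reader is referred to \cite[Theorem 2.76]{basu_algo_07} for a proof. So there is no internal argument to compare yours against; what you have done is reconstruct, in outline, the standard proof that the paper delegates to the literature. Your outline is sound and hits the right points: replacing $f(X)$ by the projection of the graph $\Gamma$ (which is defined over $\mathbb F$ because the equations $y_j - f_j(x)=0$ have coefficients in $\mathbb F$), factoring into one-variable projections, reducing by disjunctive normal form to basic sets, and then invoking single-variable elimination. Most importantly, you correctly isolate the one point that makes the ``defined over $\mathbb F$'' refinement come for free, which is exactly what the paper needs in Lemma~\ref{lem_infrigid}: every auxiliary polynomial produced by the elimination (resultants, subresultants, Sturm--Habicht sequences, discriminants, and the leading-coefficient case splits) is a fixed integer-coefficient polynomial in the coefficients of the inputs, hence again has coefficients in $\mathbb F$. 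The one caveat is that your central step --- that non-emptiness of a one-dimensional fibre is equivalent to a Boolean combination of sign conditions on these coefficient polynomials, in all degenerate configurations --- is described rather than proved; that is precisely the technical core of the Tarski--Seidenberg theorem and occupies a chapter of \cite{basu_algo_07}. As a blind attempt this is the right skeleton, but one should be clear that the hard content is being outsourced to the same machinery the paper cites, not established by the sketch itself.
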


See \cite[Theorem 2.76]{basu_algo_07} for a proof of this result.  

\subsection{Rigidity of line constrained frameworks}

Given a partitioned graph $G$ and a line set $\mL$ indexed by $J$ as above, let $$\mL^\mV = \{p:\mV \rightarrow 
\mathbb R^d \text{ such that } p(V_i) \subset L_i\text{ for } i \in J\}.$$ 
Observe that $\mL^\mV$ is a $|\mV|$-dimensional linear subspace 
of $\mathbb R^{d|\mV|}$ that is defined by 
equations with coefficients in $\mathbb Q (\mL)$.
Define the 
{\em measurement map} $M_G: \mL^\mV \rightarrow \mathbb R^\mE$ by $M_G(p) = (\|p(u) -p(v)\|^2)_{uv \in \mE}$.
For any subgraph $H$ of $G$, denote $\mL[H] = \{L_i \in \mL: V_i\cap V(H) \neq \emptyset\}$.

We say that $(G,p)$ is {\em globally $\mL$-rigid} if $M_G(q) = M_G(p)$ implies that $q = \theta \circ p$ for some $ \theta \in \isom (\mL[G])$,
and it is {\em $\mL$-rigid} if, for any $q$ in a neighbourhood of $p$ in $\mL^{\mV}$, 
$M_G(q) = M_G(p)$ implies that $q = \theta \circ p$ for some $ \theta \in \isom (\mL[G])$. See Figures~\ref{Example 1 (rigid).pdf} and \ref{Example 2 (P-connected)} for examples.
\begin{figure}[ht]
    \centering
    \includegraphics[width=0.95\textwidth]{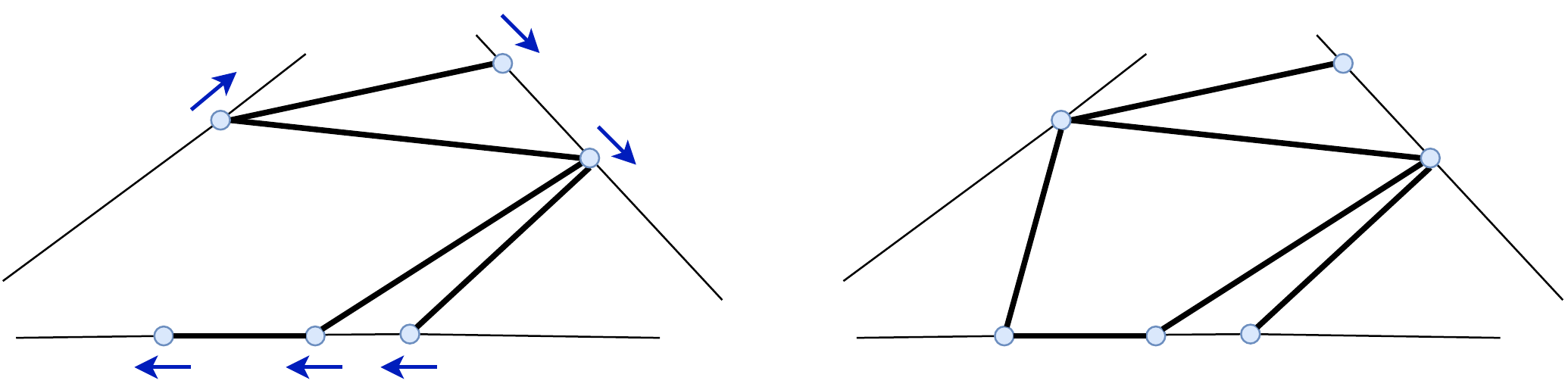}%{Rigidity-3.jpg}
    \caption{On the left a non-rigid $\mathcal{L}$-constrained framework and on the right an $\mathcal{L}$-rigid framework which is not globally $\mathcal{L}$-rigid.
}
    \label{Example 1 (rigid).pdf}
\end{figure}

We say that a graph $G$ is {\em generically globally $\mL$-rigid} if for any $\mL$-generic $p \in \mL^\mV$, the framework 
$(G,p)$ is globally $\mL$-rigid.
Our main goal is to give a combinatorial characterisation of partitioned graphs $G$ that are globally $\mL$-rigid in the case when $\mL$ is in general position. In particular we shall see that this characterisation is independent of $\mL$, as long as $\mL$ is in general position. Moreover, as a consequence of our characterisation, we will also be able to deduce that the property of global $\mL$-rigidity is a generic property of partitioned graphs. 

For rigidity analysis, it is a common strategy to consider a linearised version known as infinitesimal rigidity.
Suppose that $(G,p)$ is an $\mL$-constrained framework. An {\em infinitesimal 
flex} of $(G,p)$ is a function $f:\mV \rightarrow \mathbb R^d$ satisfying 
\begin{equation}
    \label{eqn_inf_bar_constraint}
    (p(u)-p(v))\cdot (f(u) - f(v)) = 0 \text{ for all $uv \in \mE$ and} 
\end{equation}
\begin{equation}
    \label{eqn_inf_line_constraint}
    A_i f(v) = 0 \text{ for $1 \leq i \leq k, v \in V_i$\ ,}
\end{equation}
where $A_i x = b_i$ is the standard equation of the line $L_i$. 
The coefficient matrix of the linear system defined by (\ref{eqn_inf_bar_constraint}) and (\ref{eqn_inf_line_constraint}), denoted $R(G,p,\mL)$, is called the \emph{$\mL$-rigidity matrix} of $(G,p)$. Thus an 
infinitesimal flex is an element of $\ker(R(G,p,\mL))$.

We say that $(G,p)$ is {\em infinitesimally $\mL$-rigid} if 
\begin{equation}\label{eqn_inf_rigid_def}
\dim(\ker(R(G,p,\mL))) 
=\begin{cases}
1 & (\text{if all lines in $\mL[G]$ are parallel}) \\
0 & (\text{otherwise}).
\end{cases}
\end{equation}
As observed earlier, the right hand side of (\ref{eqn_inf_rigid_def}) is equal to the dimension of $\isom(\mL[G])$, validating our definition of infinitesimal rigidity. 

In general an $\mL$-rigid framework need not be infinitesimally $\mL$-rigid. However 
we will show that for $\mL$-generic frameworks rigidity and infinitesimal rigidity are equivalent. The arguments use a well known technique due to Asimow and Roth in the case of bar-joint frameworks \cite{asi-rot} and so we only sketch the proofs.

\begin{lemma}\label{lem_dimension}
Suppose that $(G,p)$ is $\mL$-generic and let $k=\dim \ker R(G,p,\mL)$.
Then $M_G^{-1}(M_G(p))$ is an algebraic set of dimension $k$. Moreover $p$ has a open neighbourhood $U$ in $\mL^\mV$ such that $M_G^{-1}(M_G(p))\cap U$ is a smoothly embedded submanifold of $U$ of dimension $k$.
\end{lemma}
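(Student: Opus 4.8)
The plan is to run the Asimow--Roth argument \cite{asi-rot}, the crux of which is that $\mL$-genericity forces the differential of $M_G$ to attain its maximal rank, both on the source and, after pushing forward, on the image.

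First I would identify $R(G,p,\mL)$ with the differential of $M_G$. Since $\mL^\mV$ is a linear subspace defined over $\mathbb Q(\mL)$, fix a linear parametrisation $\psi:\mathbb R^{|\mV|}\to\mL^\mV$ with entries in $\mathbb Q(\mL)$ and write $p=\psi(t)$; by definition of $\mL$-genericity the coordinates of $t$ are algebraically independent over $\mathbb Q(\mL)$. Viewing $\mathrm{d}M_G|_p$ as a linear map on the tangent space $T_p\mL^\mV\cong\mL^\mV$, a direct computation shows that its kernel consists precisely of the maps $f$ satisfying \eqref{eqn_inf_bar_constraint} (the line constraints \eqref{eqn_inf_line_constraint} being exactly the condition $f\in T_p\mL^\mV$). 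Hence $\ker R(G,p,\mL)=\ker\mathrm{d}M_G|_p$ and $k=|\mV|-r$, where $r:=\rank\mathrm{d}M_G|_p$.

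Next I would show $r$ is the generic rank of $\mathrm{d}M_G$ and deduce the local statement. The locus where $F:=M_G\circ\psi$ has Jacobian rank below its maximum $r_{\max}$ is the common zero set of the $r_{\max}\times r_{\max}$ minors of $\mathrm{d}F$, which are polynomials in $t$ over $\mathbb Q(\mL)$, not all identically zero. Algebraic independence of $t$ rules out that $t$ annihilates all of them, so $r=r_{\max}$ and $p$ is a regular point. By lower semicontinuity of rank together with maximality, $\mathrm{d}M_G$ has constant rank $r$ on an open neighbourhood $U$ of $p$ in $\mL^\mV$, and the constant-rank theorem then shows $M_G^{-1}(M_G(p))\cap U$ is a smoothly embedded submanifold of $U$ of dimension $|\mV|-r=k$.

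Finally I would pin down the global dimension of the real algebraic set $V:=M_G^{-1}(M_G(p))$, which I expect to be the main obstacle. The local statement already gives $\dim_p V=k$, hence $\dim V\ge k$; the difficulty is the reverse inequality, since an individual (non-generic) fibre of a polynomial map can have components of dimension exceeding the generic fibre dimension. The resolution is that $M_G(p)$ is itself generic in the image: because $p$ is $\mL$-generic and $M_G$ has generic rank $r$, one has $\td(\mathbb Q(\mL,M_G(p)):\mathbb Q(\mL))=r$, the dimension of $\overline{M_G(\mL^\mV)}$. By Tarski--Seidenberg (Theorem~\ref{thm_tarskiseidenberg}) the image $M_G(\mL^\mV)$ and the locus $W$ of points of the image over which the fibre has dimension exceeding $|\mV|-r$ are semi-algebraic and defined over $\mathbb Q(\mL)$, and $W$ is a proper, hence lower-dimensional, subset of the image. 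Since $M_G(p)$ is generic it avoids the (Zariski closure of the) proper $\mathbb Q(\mL)$-subset $W$, so $\dim V=|\mV|-r=k$. The points needing care are keeping every exceptional locus defined over $\mathbb Q(\mL)$ so that genericity can be invoked, and using the real (semi-algebraic) notion of fibre dimension throughout, which is precisely why Tarski--Seidenberg is the tool invoked here.
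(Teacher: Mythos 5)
Your proposal takes essentially the same route as the paper's own (sketched) proof: identify $R(G,p,\mL)$ with the Jacobian of $M_G$ restricted to $\mL^\mV$, use $\mL$-genericity to conclude that its rank $r$ is maximal over $\mL^\mV$ (since the minors are polynomials over $\mathbb Q(\mL)$ and the coordinates of $p$ are algebraically independent), and apply the constant rank theorem for the local submanifold statement. Where you go beyond the paper is the first conclusion: the paper simply asserts that maximality of the rank ``implies the first conclusion,'' whereas you correctly identify the real issue, namely that a non-generic fibre of a polynomial map can have components of dimension larger than the generic fibre dimension, and that this must be ruled out by showing $M_G(p)$ is itself a generic point of the image, of transcendence degree $r$ over $\mathbb Q(\mL)$.

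However, the justification you give for the key sub-claim in that added step is not valid as stated. You argue that the exceptional locus $W$ (image points whose fibre has dimension exceeding $|\mV|-r$) is ``a proper, hence lower-dimensional, subset of the image.'' For semi-algebraic sets, properness does not imply a drop in dimension: a closed half-disc is a proper semi-algebraic subset of a disc of the same dimension. Moreover, even the properness of $W$ is not immediate from what precedes it, since the constant rank theorem controls the fibre only locally near a regular point and says nothing about distant components of that same fibre. What you actually need is that $W$ is semi-algebraic, defined over $\mathbb Q(\mL)$ (or its real closure, which does not change transcendence degrees), and of dimension strictly less than $r$; this is the semi-algebraic fibre-dimension theorem, a consequence of Hardt's semi-algebraic triviality theorem (see \cite{BCR13}). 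Concretely: partition the image into finitely many semi-algebraic pieces over which $M_G$ is semi-algebraically trivial; over any piece of dimension $r$ the fibres have dimension at most $|\mV|-r$, because the preimage of that piece has dimension at most $|\mV|$ and dimension adds along a trivial family; hence $W$ is contained in the union of the pieces of dimension less than $r$. With that substitution your argument is complete, and indeed it fills in a step that the paper's sketch leaves unjustified.
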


\begin{proof}[Sketch of proof.]
    First observe that we can view the submatrix of $R(G,p,\mL)$ induced by the rows corresponding to (\ref{eqn_inf_bar_constraint}) as the Jacobian matrix at $p$ of the polynomial function $M_G$. The fact that $p$ is $\mL$-generic ensures that the rank of $R(G,p,\mL)$ is maximal 
    over all points in $\mL^\mV$ which implies the first conclusion. Moreover this rank is constant in an open neighbourhood of $p$. Now the constant rank theorem \cite[Theorem 9]{Spivak1}
    implies the second conclusion.
\end{proof}

\begin{prop}\label{prop:generic_rigidity}
Let $(G,p)$ be a generic $\mL$-constrained framework.
Then, $(G,p)$ is rigid if and only if it is infinitesimally rigid. \qed
\end{prop}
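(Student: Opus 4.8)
The plan is to follow the classical Asimow--Roth strategy \cite{asi-rot}, now packaged conveniently by Lemma~\ref{lem_dimension}. The content is entirely contained in relating the dimension of the solution variety $M_G^{-1}(M_G(p))$ near $p$ to the dimension of the trivial orbit, and for this we use the fact that $p$ is $\mL$-generic so that the rank of $R(G,p,\mL)$ is maximal and locally constant.

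First I would record the easy direction. If $(G,p)$ is infinitesimally $\mL$-rigid, then by definition $k := \dim\ker R(G,p,\mL)$ equals $\dim\isom(\mL[G])$. By Lemma~\ref{lem_dimension}, $p$ has an open neighbourhood $U$ in $\mL^\mV$ on which $M_G^{-1}(M_G(p))\cap U$ is a smoothly embedded submanifold of dimension $k$. On the other hand, the orbit $\{\theta\circ p : \theta\in\isom(\mL[G])\}$ already sits inside $M_G^{-1}(M_G(p))$ and has dimension exactly $\dim\isom(\mL[G]) = k$ (an isometry orbit through a point is a smooth submanifold of this dimension). Since a $k$-dimensional submanifold cannot properly contain another $k$-dimensional submanifold through the same point, after possibly shrinking $U$ the two coincide. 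Hence every $q\in U$ with $M_G(q)=M_G(p)$ lies in the trivial orbit, which is exactly the statement that $(G,p)$ is $\mL$-rigid.

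For the converse I would argue by contrapositive. Suppose $(G,p)$ is not infinitesimally $\mL$-rigid, so $k > \dim\isom(\mL[G])$. Again by Lemma~\ref{lem_dimension} there is a neighbourhood $U$ of $p$ in which $M_G^{-1}(M_G(p))\cap U$ is a smooth $k$-dimensional manifold. The trivial orbit through $p$ has dimension $\dim\isom(\mL[G]) < k$, so it is a proper submanifold of $M_G^{-1}(M_G(p))\cap U$. Consequently there exist points $q\in U$ arbitrarily close to $p$ with $M_G(q)=M_G(p)$ but $q$ not of the form $\theta\circ p$ for any $\theta\in\isom(\mL[G])$, witnessing that $(G,p)$ is not $\mL$-rigid.

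The step requiring the most care is the genericity input, which is what makes the local manifold dimension equal to the global kernel dimension $k$ rather than merely bounding it: at a non-generic $p$ the rank of $R(G,p,\mL)$ could drop and the solution set could be singular at $p$, breaking the clean manifold comparison. This is precisely the role played by $\mL$-genericity in Lemma~\ref{lem_dimension}, where maximality and local constancy of the rank feed the constant rank theorem. One should also verify the (standard) claim that the isometry orbit through $p$ is a smooth submanifold of dimension $\dim\isom(\mL[G])$; when $\mL[G]$ is parallel this is the one-parameter translation group acting freely along the common direction, and otherwise $\isom(\mL[G])$ is finite by the remarks following Lemma~\ref{lem_twoset}, so the orbit is zero-dimensional, matching \eqref{eqn_inf_rigid_def} in both cases.
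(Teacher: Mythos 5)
Your proposal is correct, and it rests on the same foundation as the paper's proof, namely the constant-rank statement of Lemma~\ref{lem_dimension}. The difference is organisational: the paper splits into cases, citing the classical one-dimensional Asimow--Roth theorem \cite{asi-rot} outright when $\mL[G]$ is parallel, and, when $\mL[G]$ is not parallel, using finiteness of $\isom(\mL[G])$ to reduce $\mL$-rigidity of $(G,p)$ to the condition that $p$ is isolated in the fibre $M_G^{-1}(M_G(p))$, which Lemma~\ref{lem_dimension} converts to $\dim\ker R(G,p,\mL)=0$. Your orbit-dimension comparison handles both cases uniformly --- the paper's non-parallel argument is exactly the $0$-dimensional instance of it --- and is self-contained where the paper defers to the $1$-dimensional theorem; the price is the extra verification that the orbit $\{\theta\circ p:\theta\in\isom(\mL[G])\}$ is a submanifold of dimension $\dim\isom(\mL[G])$, which you correctly flag and settle. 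One small point to tighten: in the parallel case $\isom(\mL[G])$ is the full Euclidean group of the line, not just the one-parameter translation group, so the orbit of $p$ consists of the translation orbit together with a second one-dimensional piece coming from reflections. This does not affect either direction of your argument: for the forward direction the translation orbit alone is open in the fibre by invariance of domain, and for the converse a union of two one-dimensional submanifolds still has empty interior in a $k$-manifold when $k\geq 2$.
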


\begin{proof}[Sketch of proof]
The case when $G[\mL]$ is parallel can be deduced from the standard Asimow-Roth theorem for bar-joint frameworks in one dimension \cite{asi-rot}. 
If $G[\mL]$ is not parallel then $(G,p)$ is $\mL$-rigid if and only if there is some neighbourhood $U$ of $p$ in $\mL^\mV$ such that $U \cap M_G^{-1}(M_G(p)) = \{ p\}$. Now the required conclusion follows from Lemma \ref{lem_dimension}.
\end{proof}

In view of this,  we say that $G$ is {\em $\mL$-rigid} if $(G,p)$ 
is infinitesimally 
$\mL$-rigid for all $\mL$-generic $p \in \mL^\mV$. Note that the rank of $R(G,p,\mL)$ is determined by the set of vanishing minors of $R(G,p,\mL)$ and it follows that the maximum rank of $R(G,p,\mL)$ is attained at all $\mL$-generic $p\in \mL^\mV$. 
It follows that $(G,p)$ is infinitesimally $\mL$-rigid for some $p$ if and only if 
$G$ is $\mL$-rigid. 

We conclude this section by deriving another useful property of generic infinitesimally $\mL$-rigid frameworks based on Lemma \ref{lem_dimension}.

\begin{lemma}
    \label{lem_infrigid}
    Suppose that $G[\mL]$ is not parallel and that 
    $(G,p)$ is $\mL$-generic and infinitesimally $\mL$-rigid. Then 
    \begin{enumerate}
        \item[{\rm (1)}] $M_G^{-1}(M_G(p))$ is finite, and
        \item[{\rm (2)}] if $q \in M_G^{-1}(M_G(p))$ and $x$ is a coordinate of $q(v)$ for some
            $v \in \mV$, then $x$ is algebraic over $\mathbb Q(\mL,p)$.
    \end{enumerate}
\end{lemma}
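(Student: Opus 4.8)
The plan is to deduce both conclusions from Lemma~\ref{lem_dimension} together with the Tarski--Seidenberg theorem. Since $(G,p)$ is infinitesimally $\mL$-rigid and $G[\mL]$ is not parallel, the definition in (\ref{eqn_inf_rigid_def}) gives $\dim\ker R(G,p,\mL)=0$. Applying Lemma~\ref{lem_dimension} with $k=0$ immediately yields that $M_G^{-1}(M_G(p))$ is an algebraic set of dimension $0$, hence finite, which is conclusion (1).

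For conclusion (2), the approach is to show that each coordinate function, restricted to the finite fibre $M_G^{-1}(M_G(p))$, takes only finitely many values, and that these values are cut out by polynomials with coefficients in $\mathbb Q(\mL,p)$. First I would observe that $\mL^\mV$ is a linear subspace defined over $\mathbb Q(\mL)$ and that $M_G$ is a polynomial map with coefficients in $\mathbb Q(\mL)$. The fibre $M_G^{-1}(M_G(p))$ is therefore a semi-algebraic (indeed algebraic) set defined over the field $\mathbb Q(\mL,p)$, since the target point $M_G(p)$ has coordinates $\|p(u)-p(v)\|^2 \in \mathbb Q(\mL,p)$. Fix a vertex $v$ and a coordinate index; let $\pi:\mL^\mV \to \mathbb R$ be the corresponding coordinate projection, which is a polynomial (in fact linear) map with coefficients in $\mathbb Q$. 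Then $\pi\big(M_G^{-1}(M_G(p))\big)$ is, by Theorem~\ref{thm_tarskiseidenberg}, a semi-algebraic subset of $\mathbb R$ defined over $\mathbb Q(\mL,p)$.

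The final step is to note that, because $M_G^{-1}(M_G(p))$ is finite by conclusion (1), its image under $\pi$ is also a finite set of real numbers. A finite semi-algebraic subset of $\mathbb R$ defined over $\mathbb Q(\mL,p)$ consists precisely of points satisfying a system of polynomial equalities and inequalities with coefficients in $\mathbb Q(\mL,p)$; in particular each such point is a root of some nonzero univariate polynomial with coefficients in $\mathbb Q(\mL,p)$ and is therefore algebraic over $\mathbb Q(\mL,p)$. Since the coordinate $x$ of $q(v)$ lies in this finite image, $x$ is algebraic over $\mathbb Q(\mL,p)$, giving conclusion (2).

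I expect the only subtle point to be the clean passage from ``finite semi-algebraic set defined over $\mathbb F$'' to ``each element is algebraic over $\mathbb F$''. The cleanest way to handle this is to recall that a $0$-dimensional semi-algebraic set defined over $\mathbb F$ has each of its points determined as an isolated solution of the defining system, and the Tarski--Seidenberg formalism guarantees the defining data lies in $\mathbb F$; one then extracts a single-variable vanishing polynomial over $\mathbb F$ having $x$ as a root. No delicate estimates are needed, so this is the main (but minor) conceptual hinge of the argument rather than a genuine obstacle.
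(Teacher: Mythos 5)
Your proposal is correct and follows essentially the same route as the paper: part (1) is obtained from Lemma~\ref{lem_dimension} after noting that infinitesimal $\mL$-rigidity with $\mL[G]$ not parallel forces $\dim\ker R(G,p,\mL)=0$, and part (2) projects the finite fibre, defined over $\mathbb Q(\mL,p)$, onto each real coordinate and applies Theorem~\ref{thm_tarskiseidenberg} plus finiteness to conclude each coordinate is a root of a nontrivial polynomial over $\mathbb Q(\mL,p)$. The ``subtle point'' you flag is handled in the paper exactly as you suggest, so no gap remains.
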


\begin{proof}
    Since $G[\mL]$ is not parallel and $(G,p)$ is infinitesimally $\mL$-rigid, it follows that \break
    $\dim (\ker(R(G,p,\mL))) = 0$.
    The first conclusion now follows from Lemma \ref{lem_dimension}.
    
    To prove the second conclusion, observe that since $M_G^{-1}(M_G(p))$ is a finite semi-algebraic set 
    defined over $\mathbb Q(\mL,p)$,  Theorem~\ref{thm_tarskiseidenberg} implies that, for any $1 \leq i \leq d$, 
    the set of $i$-th coordinates of points in $M_G^{-1}(M_G(p))$ is 
    again a semi-algebraic 
    subset of $\mathbb R$ defined over 
    $\mathbb Q(\mL,p)$. 
    Moreover, since this set is finite, it is 
    a subset of the set of solutions of a nontrivial polynomial equation with coefficients in $\mathbb Q(\mL,p)$. 
    \end{proof}

\section{Characterising Generic \texorpdfstring{$\mL$}{L}-rigidity}\label{sec:rigidity}
In \cite[Theorem 4.3]{CGJN} a combinatorial characterisation of infinitesimal rigidity for line constrained frameworks was given in the case of arbitrary line constraints, but for generic point configurations. 
In \cite{CGJN}, the genericity assumption for point configurations is defined with respect to the ambient Euclidean space $\mathbb{R}^d$, and it is not difficult to see that the result in \cite{CGJN} leads to a characterisation for line constrained frameworks in our sense, 
but only in the case that at most two vertices lie on one line as a consequence of their genericity assumption.

Our goal in this section is to extend this to a characterisation of $\mL$-rigidity for arbitrary line sets $\mL$ with no restrictions on the vertex partition.
This can be accomplished by directly analyzing the rank of the rigidity matrix $R(G,p,\mL)$
of an $\mL$-constrained framework $(G,p)$ of a given graph $G$, unlike the proof in \cite{CGJN}, which is based on an inductive construction.

Let $v_i$ be an unit direction vector of the line $L_i$.
Then any infinitesimal flex $f$ of $(G,p)$ is written as 
$f(u)=t_u v_i$ for some scalar $t_u$ for each $u\in V_i$.
Let $\theta_{u,v}$ be the angle between the vector $v_i$ and $p(v)-p(u)$ for 
each edge $uv\in \mE$ with $u\in V_i$.
Then the linear system (\ref{eqn_inf_bar_constraint}) and (\ref{eqn_inf_line_constraint}) may be rewritten as 
\begin{equation}\label{eqn_inf}
t_u \cos \theta_{u,v} +t_v\cos \theta_{v,u}=0 \quad \text{(for $uv\in \mE$)}.
\end{equation}
The matrix $R'(G,p,L)$ representing (\ref{eqn_inf}) with variables $t_u\ (u\in \mV)$ has size $|\mE|\times |\mV|$, and each column is associated with a vertex and each row is associated with an edge.
Its zero-nonzero pattern is the same as that of the incidence matrix of $G$, and each nonzero entry has the form $\cos \theta_{u,v}$.
Since $\ker R'(G,p,\mL)=\ker R(G,p,\mL)$, it suffices to analyze the rank of $R'(G,p,\mL)$.

\begin{theorem}\label{thm:rigid}
    Let $G$ be a partitioned graph.
    \begin{itemize}
        \item[{\rm (1)}] If $\mL[G]$ is parallel then $G$ is $\mL$-rigid if and only if it 
        is connected.
        \item[{\rm (2)}] If $\mL[G]$ is not parallel then $G$ is $\mL$-rigid if and 
        only if every component of $G$ contains a cycle that contains an edge $uv$ associated with non-parallel lines $L_u$ and $L_v$.
    \end{itemize}
    \label{thm_rigidity_matroid}
\end{theorem}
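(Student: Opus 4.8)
The plan is to determine $\dim\ker R'(G,p,\mL)$ exactly for $\mL$-generic $p$ and then read off rigidity from the nullity targets in (\ref{eqn_inf_rigid_def}). Write $w_u$ for a unit direction of $L_u$ and $d_{uv}=p(v)-p(u)$, so that the nonzero entries of $R'$ are $\cos\theta_{u,v}=w_u\cdot d_{uv}/\|d_{uv}\|$. Each form $w_u\cdot d_{uv}$ is a nonconstant polynomial in the coordinates of $p$ (the coefficient of $u$'s own position is $-1$), hence nonzero off a proper subvariety defined over $\mathbb Q(\mL)$; so for generic $p$ every entry of $R'$ is nonzero and $R'$ has exactly the incidence pattern of $G$. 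Since $R'$ is block-diagonal over the components of $G$, I would compute nullities component by component and sum.

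Call an edge $uv$ \emph{parallel} when $L_u,L_v$ are parallel (so $w_v=\pm w_u$ and $\cos\theta_{v,u}=-\cos\theta_{u,v}$) and \emph{non-parallel} otherwise. On a parallel edge (\ref{eqn_inf}) reduces to $t_u=t_v$, so any flex is constant on each component of the subgraph of parallel edges; call these the \emph{parallel blocks}. Contracting each block yields a graph $\bar G$ whose vertices are the blocks and whose edges are exactly the non-parallel edges of $G$ (no loops, since a non-parallel edge joins blocks of different directions), and $\dim\ker R'=\dim\ker\bar R$ for the induced system $\bar R$ on $\bar G$. A tree component of $\bar G$ contributes nullity $1$ by propagation from a root along its (generically nonzero) edge ratios. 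This already proves (1): if $\mL[G]$ is parallel then every edge is parallel, each component of $G$ collapses to one block, $\dim\ker R'$ equals the number of components, and (\ref{eqn_inf_rigid_def}) is met iff $G$ is connected.

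For (2) it remains to show that a component $\bar H$ of $\bar G$ containing a cycle has nullity $0$. Fixing a spanning tree, a solution is determined by its root value, and a non-tree edge closes a cycle $C$, forcing the root value to vanish unless the product of edge-ratios around $C$ equals $1$. Restricting $\bar R$ to the $\ell$ edges and $\ell$ blocks of $C$ gives a cyclic bidiagonal matrix with $\det=\pm\bigl(\prod_iA_i-\prod_iB_i\bigr)/\prod_i\|d_{x_iy_i}\|$, where for the oriented edge $x_iy_i$ of $C$ one has $A_i=w_{x_i}\cdot d_{x_iy_i}$ and $B_i=w_{y_i}\cdot d_{x_iy_i}$. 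Each $A_i,B_i$ is an affine-linear, hence irreducible, polynomial in the position parameters $s_v$, with $A_i$ supported on $\{s_{x_i},s_{y_i}\}$ with leading coefficients $-1,\ w_{x_i}\cdot w_{y_i}$ and $B_i$ on the same pair with leading coefficients $-(w_{x_i}\cdot w_{y_i}),\ 1$. I would then argue by unique factorization that $\prod_iA_i\not\equiv\prod_iB_i$: provided no cycle edge is perpendicular the supports $\{s_{x_i},s_{y_i}\}$ are full and, along a simple cycle, pairwise distinct, so any factor-matching must pair $A_i$ with $B_i$, and comparing leading coefficients gives $(w_{x_i}\cdot w_{y_i})^2=1$, i.e.\ $L_{x_i}\parallel L_{y_i}$, contradicting that every edge of $\bar H$ is non-parallel. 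Hence $\det$ is a nonzero polynomial on $\mL^{\mV}$, so it is nonzero at every generic $p$ (the generic rank attains the maximum over $\mL^{\mV}$), $C$ is generically unbalanced, and $\dim\ker\bar R_{\bar H}=0$.

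Finally I would translate the count back to $G$: $\dim\ker R'$ equals the number of tree components of $\bar G$, and since the parallel blocks are connected and contain no non-parallel edge, a non-parallel edge is a bridge of $G$ exactly when its image is a bridge of $\bar G$; thus $\bar H$ is a tree iff no cycle of the corresponding component of $G$ meets a non-parallel edge, which yields (2). The step I expect to be the real obstacle is precisely the non-vanishing of $\prod_iA_i-\prod_iB_i$. The unique-factorization argument above is clean exactly when no cycle edge is \emph{perpendicular}: a perpendicular edge shrinks the support of $A_i$ (resp.\ $B_i$) to a single variable, which permits cross-matchings between non-adjacent factors and can produce cycles whose balance polynomial vanishes identically. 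Controlling this degeneracy—ensuring the balance polynomial is not identically zero on $\mL^{\mV}$—is where the geometric hypotheses on $\mL$ must enter, and carefully disposing of the perpendicular case is the crux of the argument.
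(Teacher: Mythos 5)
Your overall route coincides with the paper's: you work with the same matrix $R'$, reduce to a single cycle, and must show that the ``balance'' of the two products of cosines around that cycle is not identically zero; your part (1), the contraction of parallel blocks, the tree propagation, and the translation between cycles of $G$ and cycles of the contracted graph are all correct. Where you genuinely diverge is the key nonvanishing step: you argue by unique factorization, matching the irreducible affine factors $A_i$, $B_i$ by their supports, whereas the paper evaluates the determinant at one specially chosen configuration --- placing $p(u)$ so that $p(v)-p(u)\perp L_u$ for a single non-parallel cycle edge $uv$ --- so that exactly one of the two products vanishes. Both are legitimate certificates of generic nonvanishing. (One small slip: the cyclic determinant is $\prod_i A_i + (-1)^{\ell+1}\prod_i B_i$ rather than $\pm\bigl(\prod_i A_i - \prod_i B_i\bigr)$, but this is harmless since your matching argument excludes $\prod_i A_i \equiv c\prod_i B_i$ for every nonzero constant $c$.)

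The perpendicular case that you flag as the crux is indeed a genuine gap in your argument --- but it is not one you could have closed, because the statement is false there, and the paper's own proof silently breaks at exactly the same spot. Concretely: in $\mathbb R^2$ let $L_1$ be the $x$-axis, $L_2$ the $y$-axis, $V_1=\{x_1,x_3\}$, $V_2=\{x_2,x_4\}$, and let $G$ be the $4$-cycle $x_1x_2x_3x_4$. Every edge joins non-parallel lines, so part (2) predicts $\mL$-rigidity; but with $p(x_1)=(a,0)$, $p(x_2)=(0,b)$, $p(x_3)=(c,0)$, $p(x_4)=(0,d)$ the four constraints fix $a^2+b^2$, $b^2+c^2$, $c^2+d^2$, $d^2+a^2$, which satisfy one linear relation, so $a^2\mapsto a^2+t$, $b^2\mapsto b^2-t$, $c^2\mapsto c^2+t$, $d^2\mapsto d^2-t$ is a continuous flex at \emph{every} configuration and $\dim\ker R'=1$, never $0$. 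Here your balance polynomial vanishes identically, precisely via the cross-matching of support-one factors you describe; and in the paper's proof the assertion that $\cos\theta_{u',v'}\neq 0$ for every other edge fails, since forcing $p(x_2)-p(x_1)\perp L_1$ puts $p(x_1)$ at the intersection of the two lines, whence $\cos\theta_{x_1,x_4}=0$ as well and both terms of the expansion die. So the theorem requires (and, from Section 4 onward, tacitly receives via the general position assumption) the hypothesis that no two lines of $\mL[G]$ are perpendicular; under that hypothesis your factor-matching proof is complete and correct, and it has the merit of making the role of this hypothesis visible, which the paper's argument obscures.
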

\begin{proof}
For simplicity, let $\mL=\mL[G]$.
Let $(G,p,\mL)$ be an $\mL$-constrained framework.
We analyse the rank of $R'(G,p,\mL)$.

Suppose first that $\mL$ is parallel, 
i.e., $v_i=v_j$ for every $L_i$ and $L_j$ in $\mL$.
Then, for each $uv\in \mE$,
$\cos \theta_{u,v}=-\cos \theta_{u,v}$.
Also, if $p$ is $\mL$-generic,  $\cos \theta_{u,v}\neq 0$.
Hence, by dividing each row of $R(G,p,\mL)$ associated with $uv$ by $\cos \theta_{u,v}$, $R'(G,p,\mL)$ is converted to the incidence matrix of an edge-oriented digraph of $G$.
So the row matroid of $R'(G,p,\mL)$ is the graphic matroid of $G$, and hence the statement follows in the case when $\mL[G]$ is parallel.

Suppose next that $\mL$ is not parallel.
Our goal is to show that 
$R'(G,p,\mL)$ is non-singular if and only if  every component of $G$ contains a cycle that contains an edge $uv$ such that $L_u$ and $L_v$ are not parallel.
If $G$ is not connected, then  $R'(G,p,\mL)$ is the direct product of the corresponding matrices of the connected components of $G$.
Hence, it suffices to consider the case when $G$ is connected.

For $R'(G,p,\mL)$ to be non-singular, 
it is necessary that $|\mE|=|\mV|$.
Since $G$ is connected, $|\mE|=|\mV|$ implies that 
$G$ contains exactly one cycle $C$.
Then, we can orient each edge $e$ such that 
each vertex has in-degree one in the resulting directed graph.
Let $h(e)$ and $t(e)$ be the head and the tail of an edge $e$ in this orientation.
Then, from the fact that the zero-nonzero pattern of $R'(G,p,\mL)$ is the same as that of the incidence matrix of $G$, the determinant of $R'(G,p,\mL)$ can be expanded as follows:
\[
\det R'(G,p,\mL)=\prod_{e\in \mE\setminus E(C)} \cos \theta_{h(e), t(e)}
\left(
\prod_{e\in E(C)} \cos\theta_{h(e), t(e)}
+(-1)^{|E(C)|-1}\prod_{e\in E(C)} \cos \theta_{t(e), h(e)}
\right).
\]
If $\mL[C]$ is parallel, then $\cos \theta_{h(e),t(e)}=-\cos \theta_{t(e),h(e)}$ for any edge $e$ in $C$, and hence the two terms in the parentheses cancel, i.e.,  
$\det R'(G,p,\mL)=0$.

If $C$ contains an edge $e=uv$ such that $L_u$ and $L_v$ are not parallel, then we consider a point configuration $p$ as follows.
We first put $p(w)$ for all $w\in \mL\setminus \{u\}$ generically on each associated line, and put $p(u)$ such that $p(v)-p(u)$ is orthogonal to $L_u$.
Then $\cos \theta_{u,v}=0$.
Moreover, since $L_u$ is not parallel to $L_v$ and the points are located generically except for $p(u)$,
$\cos \theta_{v,u}\neq 0$ and 
$\cos \theta_{u',v'}\neq 0$ for any other edge $u'v'$.
Thus, among the two terms in the expansion of $\det R'(G,p,\mL)$, exactly one term is nonzero.
Hence $\det R'(G,p,\mL)$ is non-singular.

Therefore, if $G$ is connected, then $\dim \ker R'(G,p,\mL)=0$  if and only if $G$ contains a cycle that has an edge $uv$ associated with non-parallel lines $L_u$ and $L_v$.
\end{proof}

    %\item[(1)] If $p(u) - p(v) $ is not perpendicular to $L_u$ then $T$ is a linear isomorphism.
    %\item[(2)] If $p(u) - p(v)$ is perpendicular to $L_u$ and not perpendicular to $L_v$ then the image of $T$ is  $\{f \in \ker(R(G,p,\mL)):f(v) = 0\}$.
    %Let $G$ be a partitioned graph.
    %\begin{itemize}
        %\item[{\rm (1)}] If $\mL[G]$ is parallel then $G$ is $\mL$-rigid if and only if it 
        %is connected.
        %\item[{\rm (2)}] If $\mL[G]$ is not parallel then $G$ is $\mL$-rigid if and 
        %only if every component of $G$ contains a edge $uv$ 
        %such that $L_u$ and $L_v$ are not parallel and such that $uv$ belongs to a cycle in $G$.
    %\end{itemize}
    %\label{thm_rigidity_matroid}

\section{Necessary Conditions for Global \texorpdfstring{$\mL$}{L}-rigidity}
\label{sec:nec}

The rest of the paper is devoted to the line constrained global rigidity 
problem. In that setting the full isometry group of any finite subset of $\mL$ can play a role, so from now on we assume that
\begin{center}
   $\mL$ is in general position. 
\end{center}
When $d=2$, this is equivalent to the assumption that 
no two lines are parallel and perpendicular and 
no three lines are concurrent.

The case in which $G$ is not crossing corresponds to the classical and well understood generic global rigidity problem for frameworks in $\mathbb R^1$. Thus we shall also assume from now on that 
\begin{center}
   $G$ is a {\em crossing graph},  
\end{center}
i.e.~at least two $V_i$ are non-empty. These assumptions will always be in force. However, we will restate them explicitly in the statements of the main results.

We now collect necessary conditions for global $\mL$-rigidity.
We begin with two necessary connectivity conditions.
We say that a connected component of a graph is {\em proper} if it is a proper subgraph.

\begin{lemma}
\label{lem_Pconn0}
Let $(G,p)$ be a $\mL$-generic $\mL$-constrained framework in $\mathbb{R}^d$.
    Suppose that $(G,p)$ is globally $\mL$-rigid. Then, 
    every proper connected component $H$ of $G$ satisfies $|\mL[H]|\geq 3$.
\end{lemma}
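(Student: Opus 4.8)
The plan is to argue by contradiction: suppose $(G,p)$ is globally $\mathcal{L}$-rigid but has a proper connected component $H$ with $|\mathcal{L}[H]| \leq 2$. Since $H$ is crossing whenever its vertices meet more than one line, and since the trivial case $|\mathcal{L}[H]|=1$ collapses onto a single line, the essential case to rule out is $|\mathcal{L}[H]| = 2$, say $\mathcal{L}[H] = \{L_1, L_2\}$ with $L_1, L_2$ neither parallel nor perpendicular (as $\mathcal{L}$ is in general position). The idea is to exploit the fact that $\mathrm{Isom}(\{L_1,L_2\})$ is strictly larger than $\mathrm{Isom}(\mathcal{L}[G])$, so that the nontrivial isometry of the two-line set produces a genuinely new framework that is nonetheless measurement-equivalent, contradicting global $\mathcal{L}$-rigidity.

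Concretely, here is how I would carry it out. First I would treat the degenerate case $|\mathcal{L}[H]| \leq 1$: if $H$ lies on a single line, its vertices can be reflected through any point of that line, or more simply one exhibits a nontrivial isometry of $L_1$ alone that moves $p|_H$ while preserving all edge lengths inside $H$; since $H$ is proper, there is at least one other component meeting a different line, so this isometry does not extend to an element of $\mathrm{Isom}(\mathcal{L}[G])$, giving the contradiction. For the main case $\mathcal{L}[H] = \{L_1, L_2\}$, I would invoke Lemma 1.2: $\mathrm{Isom}(\{L_1, L_2\})$ is cyclic of order two, with nontrivial element $\gamma$ acting as the half-turn about the common perpendicular, i.e.\ $\gamma|_{L_i}$ is the reflection in the closest point $x_i \in L_i$. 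The plan is to define $q : \mathcal{V} \to \mathbb{R}^d$ by setting $q|_{V(H)} = \gamma \circ p|_{V(H)}$ and $q = p$ on every other vertex. Because $\gamma$ is an isometry fixing each of $L_1, L_2$ setwise, we have $q(V_i) \subset L_i$, so $q \in \mathcal{L}^{\mathcal{V}}$, and $\|q(u) - q(v)\| = \|p(u) - p(v)\|$ for every edge $uv$: edges inside $H$ are preserved because $\gamma$ is an isometry, and edges outside $H$ are untouched. Hence $M_G(q) = M_G(p)$.

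To finish, I must verify that $q \neq \theta \circ p$ for every $\theta \in \mathrm{Isom}(\mathcal{L}[G])$, which is where the real content lies. Since $G$ is crossing, $\mathcal{L}[G]$ contains at least two non-parallel lines; by Lemma 1.4, $\mathrm{Isom}(\mathcal{L}[G])$ is either trivial (if $|\mathcal{L}[G]| \geq 3$) or cyclic of order two generated by a global half-turn (if $|\mathcal{L}[G]| = 2$). The key point is that $q$ agrees with $p$ outside $H$ but differs from $p$ on $H$ (genericity guarantees $\gamma \circ p \neq p$ on $H$, since $p$ restricted to $L_i$ is not fixed by the reflection in $x_i$), so if $q = \theta \circ p$ then $\theta$ must fix $p$ on the complement of $H$ and act as $\gamma$ on $H$. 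When $|\mathcal{L}[G]| \geq 3$ this forces $\theta = \mathrm{Id}$, contradicting $q \neq p$. When $|\mathcal{L}[G]| = 2$, one has $\mathcal{L}[G] = \mathcal{L}[H]$, but then $H$ being a \emph{proper} component means there is another component $H'$ with $V(H') \neq \emptyset$ sharing these two lines, and $\theta$ would have to simultaneously act as $\gamma$ on $H$ and as the identity on $H'$ — impossible for a single element of the order-two group, since its restriction to each line is determined globally.

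The main obstacle is the bookkeeping in this last step: one must carefully use $\mathcal{L}$-genericity to ensure $\gamma \circ p \neq p$ on $H$ (so that $q$ is genuinely a new solution and not trivially equal to $p$), and one must handle the $|\mathcal{L}[G]| = 2$ case with care, arguing that the existence of a \emph{second} component forces any candidate $\theta$ to behave inconsistently. I expect the cleanest route is to note that an element of $\mathrm{Isom}(\mathcal{L}[G])$ is determined by its restriction to the union of the lines, so demanding $\theta = \gamma$ on $H$ but $\theta = \mathrm{Id}$ on the rest is a contradiction precisely because both $H$ and its complement meet the same line $L_1$ (or $L_2$) on which $\gamma$ and $\mathrm{Id}$ disagree.
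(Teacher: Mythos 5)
Your proposal is correct and follows essentially the same route as the paper: pick the non-trivial element $\gamma \in \isom(\mL[H])$ (guaranteed by Lemma~\ref{lem_isomgroup} since $|\mL[H]|\leq 2$ and $\mL$ is in general position), move $H$ by $\gamma$ while fixing the rest, and use $\mL$-genericity together with the properness of $H$ to rule out $q = \theta\circ p$ for any $\theta\in\isom(\mL[G])$. The only difference is presentational: the paper handles $|\mL[H]|\leq 1$ and $|\mL[H]|=2$ uniformly and leaves the final genericity step terse, whereas you spell out the case analysis on $|\mL[G]|$ explicitly.
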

\begin{proof}
Suppose there is a proper connected component $H$ of $G$ with $|\mL[H]|\leq 2$.
By Lemma~\ref{lem_isomgroup}, there exists a non-identity element $\gamma\in \isom(\mL[H])$.
Define $q$ such that $q(v)=\gamma p(v)$ for each vertex $v$ in $H$ and $q(v)=p(v)$ for the remaining vertices $v$ of $G$.
Then $M_G(p)=M_G(q)$ holds.
However, since $H$ is a proper subgraph of $G$ and $p$ is $\mL$-generic, $q\neq \theta\circ p$ for any $\theta\in \isom(\mL[G])$.
This contradicts the global $\mL$-rigidity of $(G,p)$.
\end{proof}
\begin{lemma}
\label{lem_Pconn}
Let $(G,p)$ be a $\mL$-generic $\mL$-constrained framework in $\mathbb{R}^d$.
    Suppose that $(G,p)$ is globally $\mL$-rigid. Then, for every $v \in \mV$, every connected component $H$ of $G-v$ satisfies $|\mL[H]|\geq 2$. 
\end{lemma}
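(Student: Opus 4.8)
The plan is to mimic the structure of the proof of Lemma~\ref{lem_Pconn0}, constructing an explicit alternative framework $q$ that is equivalent to $p$ under the measurement map but is not related to $p$ by a global isometry of $\mL[G]$. Suppose for contradiction that there is a vertex $v$ and a connected component $H$ of $G-v$ with $|\mL[H]|\leq 1$, i.e.\ all vertices of $H$ lie on a single line $L$. The idea is to apply a nontrivial isometry to $H$ while fixing $v$ and the rest of $G$. Since $H$ lies on the single line $L$, the group $\isom(\mL[H])\cong\isom(\{L\})$ is the full $1$-dimensional Euclidean group, which contains reflections as well as translations. The key observation is that $v$ has at most finitely many neighbours in $H$, and because all of $H$ sits on the line $L$, I can find a nontrivial isometry of $L$ that fixes the (finite) set of distances from $p(v)$ to the neighbours of $v$ in $H$.

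First I would isolate the relevant structure: let $N$ denote the set of neighbours of $v$ lying in $H$, and consider the orthogonal projection $\pi$ of $p(v)$ onto the line $L$. Each edge from $v$ into $H$ constrains $\|p(v)-p(u)\|^2 = \|p(v)-\pi\|^2 + \|\pi - p(u)\|^2$, so the edge length depends only on the signed distance along $L$ from $\pi$ to $p(u)$, up to sign. Thus the reflection $\rho$ of $L$ about the point $\pi$ preserves all these distances simultaneously, while also being an isometry of $L$ and hence preserving all edges internal to $H$. Define $q$ by $q(u)=\rho(p(u))$ for $u\in V(H)$ and $q(u)=p(u)$ otherwise; then $M_G(q)=M_G(p)$.

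Next I would argue that $q$ is not of the form $\theta\circ p$ for any $\theta\in\isom(\mL[G])$. Here I would use that $H$ is a proper connected component of $G-v$, so $G$ is crossing and contains lines outside $\mL[H]=\{L\}$; genericity of $p$ then prevents the reflection on $L$ from extending to a global isometry fixing the complementary part of the framework. Concretely, since $\mL$ is in general position and $|\mL[G]|$ may be large, any $\theta\in\isom(\mL[G])$ is tightly constrained (by Lemma~\ref{lem_isomgroup}, trivial when $|\mL[G]|\geq 3$, and an order-two rotation otherwise), and $q$ moves the vertices of $H$ along $L$ while pinning everything else, which is incompatible with applying a single global $\theta$ to all of $p$. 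This yields the desired contradiction with global $\mL$-rigidity.

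The main obstacle I anticipate is the final non-extension step, specifically ensuring that no global isometry $\theta\in\isom(\mL[G])$ reproduces the effect of $\rho$ on $H$ while also fixing the rest. The delicate point is that when $|\mL[G]|=2$ there is a genuine nontrivial element of $\isom(\mL[G])$, so I must verify that this element does not agree with $\rho$ on $H$ and the identity elsewhere; this is where $\mL$-genericity of $p$ does the real work, since a global isometry would impose algebraic relations among the coordinates of $p$ over $\mathbb Q(\mL)$ that genericity forbids. I would also take care to handle the edge case where $v$ has no neighbours in $H$ (then $H$ is already a component of $G$ violating Lemma~\ref{lem_Pconn0}, or the reflection is even easier to apply), so that the argument is uniform.
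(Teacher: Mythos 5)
Your proposal is correct and takes essentially the same approach as the paper: your reflection $\rho$ of $L$ about the projection $\pi$ of $p(v)$ is exactly the restriction to $L$ of the paper's reflection in the hyperplane through $p(v)$ perpendicular to $L$, giving the identical configuration $q$. The paper's closing argument is the rigorous version of your final step: global $\mL$-rigidity gives $q=\theta\circ p$, then $q(v)=p(v)$ together with $\mL$-genericity and Lemma~\ref{lem_twoset} forces $\theta=\Id$, so $\rho$ would have to fix every $p(y)$, $y\in V(H)$, contradicting genericity since $\rho$ is a reflection of $L$ about a point determined by $p(v)$ and $\mL$.
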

\begin{proof}
    Let $H$ be a component of $G-v$.
Suppose, for a contradiction,  that $V(H)\subseteq V_i$ for some $V_i$ in the vertex partition of $G$. 
Let $\gamma$ be the reflection in the hyperplane containing $p(v)$ that is perpendicular to $L_i$. Since $\mL$ is in general position and $p$ is $\mL$-generic,  
the restriction of $\gamma$ to $\bigcup_i L_i$ is not in $\isom(\mL[G])$. 

Let $q(y) = p(y)$ for all $y \in V(G) \setminus V(H)$ and $q(y) = \gamma p(y)$ for $y \in V(H)$. 
Then  $M_G(q) = M_G(p)$ holds.
Hence, by the global $\mL$-rigidity of $(G,p)$, $q = \theta  p$ %\footnote{\Fatemeh{Shall we also write $\gamma \circ p(y)$?}} 
for some $ \theta \in \isom(\mL[G])$. 
Now $p(v) = q(v)$ and since $p$ is $\mL$-generic it follows from Lemma~\ref{lem_twoset} that $\theta = \Id$. Therefore 
$q = p$ and it follows that $\gamma p(y) = p(y)$ for all $ y \in V(H)$ contradicting the 
fact that $p$ is $\mL$-generic since $\gamma$ is a reflection on $L_i$.
\end{proof}

    Next we prove a line constrained version of a well known 
result of Hendrickson~\cite{hendrickson}. 
The standard, and essentially only, known proof of Hendrickson's result for bar-joint frameworks uses Sard's theorem. Our approach is similar but avoids applying Sard's theorem.
Our new technique also works for bar-joint frameworks and we believe it will be useful for rigidity problems in other settings. The following lemma is a key ingredient in our proof.
Recall that a graph is said to be {\em even} if every vertex has even degree.

\begin{lemma}\label{lem:even}
    If $X$ is a bounded real algebraic curve then it is homeomorphic to the geometric realisation of an even graph whose vertices are the singular points of $X$.
    \end{lemma}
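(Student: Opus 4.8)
\textbf{The plan is to} exhibit $X$ as a one-dimensional CW-complex and argue that its local structure at every point forces even degrees. The key structural input is that a real algebraic curve, away from finitely many singular points, is a one-dimensional smooth manifold, i.e.\ a disjoint union of arcs. Concretely, let $S$ denote the (finite) set of singular points of $X$. Then $X \setminus S$ is a smooth $1$-manifold, and since $X$ is bounded (hence $X$ is compact, being also closed as an algebraic set), the complement $X \setminus S$ has finitely many connected components, each homeomorphic to an open arc. I would first verify that each such open arc has both of its limit endpoints lying in $S$; this uses compactness of $X$ together with the fact that the closure of each component is contained in $X$. This realises $X$ as the geometric realisation of a graph whose vertex set is $S$ and whose edges are the closures of the arc-components of $X \setminus S$, where a self-loop is allowed if both endpoints of an arc coincide.

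\textbf{Next I would} establish the evenness of every vertex degree, which is the crux of the statement. The degree of a vertex $s \in S$ in this graph is the number of arc-ends that limit onto $s$, counted with the convention that a loop contributes $2$. The claim is that this number is even. The natural tool here is the local conical (or ``link'') structure of a real algebraic set: a classical result (for instance, via the local triviality of semialgebraic sets, or the curve selection lemma together with the structure of a real algebraic curve germ) says that a small sphere $\partial B_\varepsilon(s)$ of radius $\varepsilon$ centred at $s$ meets $X$ in a finite set of points for all sufficiently small $\varepsilon$, and the number of such intersection points equals the local degree, i.e.\ the number of branches emanating from $s$. So I must show this intersection count is even.

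\textbf{The hard part will be} proving that the number of branches at each singular point is even, and for this I would invoke an Euler-characteristic or mod-$2$ homological argument rather than a branch-by-branch analysis. The cleanest route is to use the fact that a compact real algebraic variety has even Euler characteristic in a suitable sense; more precisely, for a real algebraic curve the local intersection number with a generic small sphere is even because $X$ is the real locus of a variety and such loci satisfy a mod-$2$ ``no boundary'' condition. Equivalently, one can argue that $X$, as a compact semialgebraic set of dimension one, carries a fundamental class in $H_1(X;\mathbb{Z}/2)$ whose boundary vanishes, which translates into the condition that at every vertex the number of incident edge-ends is even; this is exactly the mod-$2$ cycle condition, i.e.\ evenness of the graph. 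I anticipate that making the ``link of a real algebraic curve singularity has an even number of points'' step fully rigorous is where the real work lies, since it is precisely this parity---and not merely finiteness or local conicality---that encodes why $X$ must be even. A self-contained way to see the parity is to note that each branch, being a real-analytic arc, enters and must eventually return to the singular set, so traversing $X$ decomposes its edge-ends into pairs at each vertex; formalising this pairing via the mod-$2$ boundary map $\partial\colon C_1(X;\mathbb{Z}/2) \to C_0(X;\mathbb{Z}/2)$ vanishing on the fundamental cycle is the approach I would write up.
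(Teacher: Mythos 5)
Your graph construction (vertices the singular points $S$, edges the closures of the arc components of $X\setminus S$) is essentially the paper's construction, which builds the same graph by covering $X$ with small neighbourhoods and then smoothing away the regular vertices. The genuine gap is in the step you yourself flag as the crux: the evenness of the number of branch ends at each singular point. None of the justifications you sketch establishes it. The ``self-contained'' pairing argument is incorrect: an arc of $X\setminus S$ leaving a vertex $s$ may terminate at a \emph{different} singular point, so traversing the edges pairs up edge-ends only in total (each edge has two ends), not at each individual vertex. Indeed, your pairing argument never uses that $X$ is algebraic, and it would equally ``prove'' that a closed line segment --- a compact, connected, one-dimensional \emph{semialgebraic} set --- is an even graph, which is false since its endpoints have degree one. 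The same example shows that your assertion that $X$, ``as a compact semialgebraic set of dimension one, carries a fundamental class in $H_1(X;\mathbb{Z}/2)$'' is wrong as stated: mod-$2$ fundamental classes exist for compact real \emph{algebraic} sets (Borel--Haefliger/Sullivan), not for arbitrary compact semialgebraic ones, and this distinction is exactly where the content of the lemma lies. Finally, parity of a global Euler characteristic could not suffice even in principle, since evenness of a graph is a local, per-vertex condition: two disjoint segments have even Euler characteristic yet four vertices of odd degree.

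What is needed, and what the paper invokes, is the local parity theorem for real algebraic curves: at every point of a real algebraic curve the number of half-branches is even (the curve case of Sullivan's theorem that the link of a point of a real algebraic set has even Euler characteristic; the paper cites \cite[Theorem 9.5.7]{BCR13}). With that single citation your argument closes, because the degree of a vertex $s$ in your graph is precisely the number of half-branches of $X$ at $s$. So the fix is to replace your homological paragraph by an appeal to this theorem, stated for \emph{algebraic} rather than semialgebraic sets. One further small point: the components of $X\setminus S$ need not all be open arcs --- a connected component of $X$ containing no singular point is a smooth compact curve, hence a circle --- so your decomposition should handle such components separately (they are trivially realisations of even graphs once a dummy vertex is placed on them).
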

    
    \begin{proof}
    Note that $X$ is compact in the usual Euclidean topology, as it is both bounded and closed. Therefore, we have a finite covering $\{N_{\epsilon}(p_i)\}_{i=1,\dots,n}$ of $X$ where $p_i \in X$ and $N_{\epsilon}(p_i)$ is the $\epsilon$-neighbourhood of $p_i$ for sufficiently small $\epsilon > 0$. As real algebraic curves have finitely many singular points \cite[Proposition 3.3.14]{BCR13}, we can chose a finite covering such that it contains all $\epsilon$-neighbourhoods of singular points. Now using \cite[Theorem 9.5.7]{BCR13}, for every $N_{\epsilon}(p_i)$ we have an even number of half-branches centered at $p_i$. From this we can construct a graph $G$ with vertices corresponding to $p_i$ and edges corresponding to the half-branches. Without loss of generality we can assume that we are in a connected component of $X$, which implies that $G$ is connected. Now let $H$ be the graph obtained from $G$ by smoothing out all the vertices of $G$ which are regular points of $X$ so that the vertices of $H$ precisely correspond to the singular points of $X$. In other words, $H$ is a geometric realisation of an even graph with vertices corresponding to the singular points of $X$.  
    \end{proof}

\begin{theorem}
\label{thm:hendrickson}
Suppose that  $p \in \mL^\mV$ is $\mL$-generic and that $(G,p)$ is globally $\mL$-rigid. Then $(G-uv,p)$ is infinitesimally $\mL$-rigid for any edge $uv \in \mE$. 
\end{theorem}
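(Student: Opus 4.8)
\emph{Strategy.} The plan is to argue by contradiction: assume $(G-uv,p)$ is not infinitesimally $\mathcal L$-rigid and produce a configuration with the same edge lengths as $p$ that is not an isometric image of $p$, contradicting global $\mathcal L$-rigidity. Since $\mathcal L$ is in general position and $G$ is crossing, $\mathcal L[G]$ is not parallel, so global rigidity implies rigidity and hence, by Proposition~\ref{prop:generic_rigidity}, that $(G,p)$ is infinitesimally $\mathcal L$-rigid; thus $\ker R(G,p,\mathcal L)=0$, and by Lemma~\ref{lem_infrigid} the fiber $M_G^{-1}(M_G(p))$ is finite. By the definition of global $\mathcal L$-rigidity this fiber is exactly the orbit $\isom(\mathcal L[G])\cdot p$, which by Lemma~\ref{lem_isomgroup} equals $\{p\}$ when $|\mathcal L[G]|\ge 3$ and $\{p,\gamma p\}$ (with $\gamma$ the unique nontrivial isometry) when $|\mathcal L[G]|=2$.

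\emph{Geometric set-up.} Let $V=M_{G-uv}^{-1}(M_{G-uv}(p))\subset \mathcal L^{\mathcal V}$ and let $g(q)=\|q(u)-q(v)\|^2$, so that $M_G^{-1}(M_G(p))=V\cap g^{-1}(\ell^2)$ with $\ell^2=g(p)$. If $(G-uv,p)$ is not infinitesimally rigid then $m:=\dim\ker R(G-uv,p,\mathcal L)\ge 1$, so by Lemma~\ref{lem_dimension} the set $V$ is, near $p$, a smooth manifold of dimension $m$ with $T_pV=\ker R(G-uv,p,\mathcal L)$. Because $\ker R(G,p,\mathcal L)=0\ne\ker R(G-uv,p,\mathcal L)$, there is a flex $f\in T_pV$ that violates the single missing constraint, i.e.\ $dg_p(f)=2\,(p(u)-p(v))\cdot(f(u)-f(v))\ne 0$. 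In particular $g|_V$ is a submersion at $p$.

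\emph{Reduction to a bounded curve and parity.} I would next cut $V$ down to a curve: intersecting $V$ with $m-1$ generic hyperplanes through $p$ yields a $1$-dimensional real algebraic set $X$ that is smooth at $p$, whose tangent line $T_pX\subset T_pV$ can be chosen so that $dg_p$ is nonzero on it, and which avoids the finitely many other points of $\isom(\mathcal L[G])\cdot p$ (needed only in the two-line case, to keep $\gamma p\notin X$). Then $\ell^2$ is a regular value of $g|_X$ whose fiber $X\cap M_G^{-1}(M_G(p))$ is the single point $p$. Granting that $X$ may be taken bounded, Lemma~\ref{lem:even} shows $X$ is homeomorphic to an even graph; decomposing this even graph into edge-disjoint cycles and using that the continuous function $g$ returns to its starting value around each cycle, $g|_X$ crosses the regular value $\ell^2$ an even number of times, so $|(g|_X)^{-1}(\ell^2)|$ is even. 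This contradicts that the fiber is exactly $\{p\}$.

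\emph{Main obstacle.} The crux is the boundedness of $X$ required by Lemma~\ref{lem:even}. I expect to obtain it from the non-parallel structure: any edge joining two distinct (hence non-parallel) lines forces its endpoints into a bounded region once its length is fixed, since for non-parallel lines the distance between points tends to infinity as either point escapes, and boundedness then propagates along every edge of the component, because a vertex at fixed distance from a bounded point on a fixed line is itself bounded. Care is needed because deleting $uv$ may disconnect the component containing it, so I would restrict attention to the component of $G$ containing $uv$ (which by Lemma~\ref{lem_Pconn0} involves at least three lines when it is proper), verify that the piece of $V$ through $p$ still meets a non-parallel edge, and if necessary replace $V$ by the bounded connected component through $p$ before slicing. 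A secondary point is to check that the generic slicing can simultaneously preserve smoothness of $X$ at $p$, keep $dg_p$ nonzero along $T_pX$, and avoid $\gamma p$; this is a routine genericity argument.
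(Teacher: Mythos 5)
Your overall architecture (bounded real algebraic curve, Lemma~\ref{lem:even}, a parity/intermediate-value argument at the level $\ell^2$) is the same as the paper's, but there is a genuine gap in the case $|\mL[G]|=2$, and the device you propose to handle it cannot exist. Since deleting one edge increases the kernel dimension by at most one, $\dim\ker R(G-uv,p,\mL)\le\dim\ker R(G,p,\mL)+1=1$, so $m=1$ always: your curve $X$ is forced to be all of $V=M_{G-uv}^{-1}(M_{G-uv}(p))$, and there are no ``$m-1$ generic hyperplanes'' available with which to cut away $\gamma p$. But $\gamma p$ always lies in $V$, because $\gamma\in\isom(\mL[G])=\isom(\mL[G-uv])$ preserves every pairwise distance, and $\gamma p\neq p$ by genericity. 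Hence in the two-line case the fiber $(g|_X)^{-1}(\ell^2)=M_G^{-1}(M_G(p))$ is exactly $\{p,\gamma p\}$: two transversal crossings, an even number, and your parity count yields no contradiction (both points may perfectly well lie on the same cycle of the even-graph decomposition). The paper resolves precisely this obstruction by a different move: it shows the nontrivial isometry $\theta$ acts on $X$ without fixed points (a fixed point would force $\|p(w)-p(z)\|^2$ to be algebraic over $\mathbb{Q}(\mL[G])$, contradicting $\mL$-genericity), passes to the quotient $X/\isom(\mL[G])$, which is again homeomorphic to an even graph and now contains the single point $\overline{p}$, and then uses $2$-edge-connectivity of connected even graphs together with the intermediate value theorem on $Y-\overline{p}$, rather than a pure crossing-parity count on $X$ itself.

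A second, smaller gap is in your boundedness step. When $G-uv$ disconnects the component of $uv$ into $H\ni u$ and $K\ni v$ with, say, all of $V(H)$ on a single line, your fallback ``replace $V$ by the bounded connected component through $p$'' fails: translating $H$ rigidly along its line preserves every edge length of $G-uv$, so the connected component of $V$ through $p$ is itself unbounded. This case must instead be excluded outright using the global rigidity of $(G,p)$: since $p(u)-p(v)$ is not perpendicular to $L_u$ (genericity), one can slide $H$ to the second position $p'$ on that line with $\|p'(u)-p(v)\|=\|p(u)-p(v)\|$, producing a configuration of $G$ (not just of $G-uv$) with the same edge lengths that is not an isometric image of $p$ --- a contradiction, so $|\mL[H]|\geq 2$ and $|\mL[K]|\geq 2$ and boundedness then follows as you describe. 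This is what the paper does; without it, the hypothesis of Lemma~\ref{lem:even} is simply not available in this case.
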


\begin{proof}
   First observe that $(G,p)$ is infinitesimally rigid by Proposition \ref{prop:generic_rigidity}. 
    Hence, Lemma~\ref{lem_infrigid} implies that 
    $\dim(\ker( R(G,p,\mL))) = 0$.
    Similarly, by Lemma~\ref{lem_dimension}, $X = M_{G-uv}^{-1}(M_{G-uv}(p))$ is an algebraic subset  of $\mL^\mV$ and
    $\dim(X) = \dim(\ker(R(G-uv,p,\mL))) \leq \dim(\ker( R(G,p,\mL)))+1 = 1$.
    If $\dim (X) = 0$ then $(G-uv,p)$ is infinitesimally $\mL$-rigid. 
    So we may assume that $\dim(X) = 1$ and so $X$ is an algebraic curve 
    defined over $\mathbb Q(\mL,p)$. 
    
    \begin{claim} $X$ is a bounded subset of $\mL^\mV$. \end{claim}
    \begin{proof} 
        First we make a general observation. Suppose that $H$ is a connected crossing graph and that $r \in \mL^{V(H)}$. For any $w \in V(H)$ there is some $z \in V(H)$ such that $L_w$ and $L_z$ are not parallel. Since there is a path $w = u_1,u_2,\dots,u_k= z$ in $H$, we see that for any $q \in M_H^{-1}(M_H(r))$, $\|q(w) - q(z)\| \leq \sum_{i=1}^{k-1}\|q(u_i) - q(u_{i+1})\| = \sum_{i=1}^{k-1}\|r(u_i) - r(u_{i+1})\|$. Since $L_w$ and $L_z$ are not parallel, it follows that $q(w)$ lies in bounded subset of $L_w$ and so $M_H^{-1}(M_H(r))$ is bounded.
        
        Now suppose that $G'$ is the component of $G$ that contains $uv$ and $G'' = G-G'$. Then $X = Y \times Z$ where
        $Y = M_{G'-uv}^{-1}(M_{G'-uv}(p|_{V(G')}))$ and $Z = M_{G''}^{-1}(M_{G''}(p|_{V(G'')}))$. Since $(G,p)$ is globally $\mL$-rigid, it follows from Lemma \ref{lem_Pconn0} that every component of $G''$ is crossing and so $Z$ is bounded using the general observation above. Thus $X$ is bounded if and only if $Y$ is bounded and so we may as well assume from 
        now that $G = G'$ is a connected graph.
        If $G-uv$ is a connected graph then since it is also a crossing graph the conclusion follows from the general observation above. 

        Hence we can assume that $G-uv$ is not connected and (since $G$ is connected) has two components $H$ and $K$. Suppose 
        $H$ and $K$ contain the vertices $u$ and $v$ respectively. If $|\mL[H]| = 1$ then, since $p$ is generic (in particular $p(u) - p(v)$ is not perpendicular 
        to $L_u$), we can slide $(H,p|_{V(H)})$ along $\mL[H]$ to find $(H,p')$ such that 
        $\|p'(u) - p(v)\| = \|p(u) - p(v)\|$ and so that $p' \neq p|_{V(H)}$, contradicting 
        the global rigidity of $(G,p)$. Thus $|\mL[H]| \geq 2$, and similarly $|\mL[K]| \geq 2$. 
        Let $Y = \{q \in \mL^{V(H)}: M_{H}(q) = M_H(p|_{V(H)})\}$ and $Z = \{q \in \mL^{V(K)}: M_{K}(q) = M_K(p|_{V(K)})\}$.
        Using the general observation we see that both $Y$ and $Z$ are 
        bounded. Now, $X \subset Y \times Z$ and the claim follows.
    \end{proof}
    
    Thus $X$ is a bounded real algebraic curve, and so by Lemma~\ref{lem:even}, it is homeomorphic to the geometric realisation of an even graph whose vertices are singular points of $X$. Also $p$, being $\mL$-generic, is a non-singular point of $X$ by Lemma~\ref{lem_dimension} and therefore lies in the interior of an edge of $X$ (viewed as a graph).
    
    In the case where $|\mL[G]|\geq 3$, $\isom(\mL[G])$ is trivial by Lemma~\ref{lem_isomgroup}.
    In the case where $|\mL[G]| =2$, let $\theta$ be the unique non-trivial element of $\isom(\mL[G])$. 
    We show that $\theta$ acts without fixed points on $X$. 
    To see this, suppose  $\theta$ fixes $q\in X$.
    Then by Lemma~\ref{lem_twoset} 
    $\theta|_{L_i}$ is the reflection around 
    $x_i$ for each $i=1,2$, where $x_i$ is the closest point in $L_i$ to $L_{3-i}$.
    So $\theta q=q$ implies that each point of $q$ lies in $x_1$ or $x_2$, and $\|q(w)-q(z)\|^2$ is equal to either $\|x_1-x_2\|^2$ or $0$. 
    However, since $x_1$ and $x_2$ are algebraic over $\mathbb{Q}(\mL[G])$,
    $\|p(w)-p(z)\|^2$ would be algebraic over $\mathbb{Q}(\mL[G])$.
    This contradicts the $\mL$-genericity of $p$. Therefore $\theta$ induces
    a fixed point free involution of $X$ in the case $|\mL[G]| = 2$. 
    
    Thus, in all cases, $X/\isom(\mL[G])$ is homeomorphic to the geometric realisation of an even graph. 
    Let $\overline p$ be the image of
    $p$ in $X/\isom(\mL[G])$.
   Since $p$ is $\mL$-generic, Lemma \ref{lem_dimension} implies that a neighbourhood of $p$ in $X$ is homeomorphic to $\mathbb R$ and thus a neighbourhood of $\overline p$ in $X/\isom(\mL[G])$ is homeomorphic to $\mathbb R$. In particular this implies that $\overline p$ is either an interior point of an edge or a vertex of degree 2 in $X/\isom(\mL[G])$ (viewed as a graph).
    Let $Y$ be the component of $X/\isom(\mL[G])$ that contains $\overline p$. Since a connected even graph is 
    2-edge-connected it follows that $Y-\overline p$ is connected. Now 
    define $f :Y \rightarrow \mathbb R$ by $f(\overline q) =  \|q(u) - q(u)\|^2$. Since $(G,p)$ is
    rigid
    and $(G-uv,p)$ is not rigid, Proposition \ref{prop:generic_rigidity} implies that $(G,p)$ is infinitesimally rigid and $(G-uv,p)$ is not. 
    Hence $f'(p) \neq 0$. 
    So there are points $q_1,q_2 \in Y$
    such that $f(\overline{q}_1) < f(\overline p) < f(\overline{q}_2)$. Now, since $Y-\overline p$ is connected $Y-\overline{p}$ has a path between $\overline{q}_1$ and $\overline{q}_2$, and the path contains  
    a point $p' \in X$ such that $\overline{p}' \neq \overline{p}$ and $f(\overline{p}') = f(\overline{p})$ contradicting the fact that 
    $(G,p)$ is globally $\mL$-rigid.
\end{proof}

We say that a partitioned graph $G = (\mV,\mE)$ is {\em P-connected} if 
every proper connected component $H$ of $G$ satisfies $|\mL[H]|\geq 3$
and every connected component $H'$ of $G-v$ satisfies $|\mL[H']|\geq 2$ for every $v\in \mV$. See Figure~\ref{Example 2 (P-connected)} (left) for an example. 
Note that a P-connected graph is not necessarily connected.

We say that $G$ is {\em redundantly  $\mL$-rigid} if $G-e$ is $\mL$-rigid for all $e \in \mE$. Observe that in the case that $\mL$ is in general position and $G$ is crossing, Theorem~\ref{thm_rigidity_matroid} implies that $G$ is redundantly rigid if and only if, for every $e \in \mE$, every component of $G-e$ contains a crossing cycle. See Figure~\ref{Example 2 (P-connected)} (right). Lemma~\ref{lem_Pconn0}, Lemma~\ref{lem_Pconn} and Theorem~\ref{thm:hendrickson} imply the following necessary conditions for a partitioned graph to be generically globally $\mL$-rigid. 

\begin{figure}[ht]
    \centering
    \includegraphics[width=0.95\textwidth]{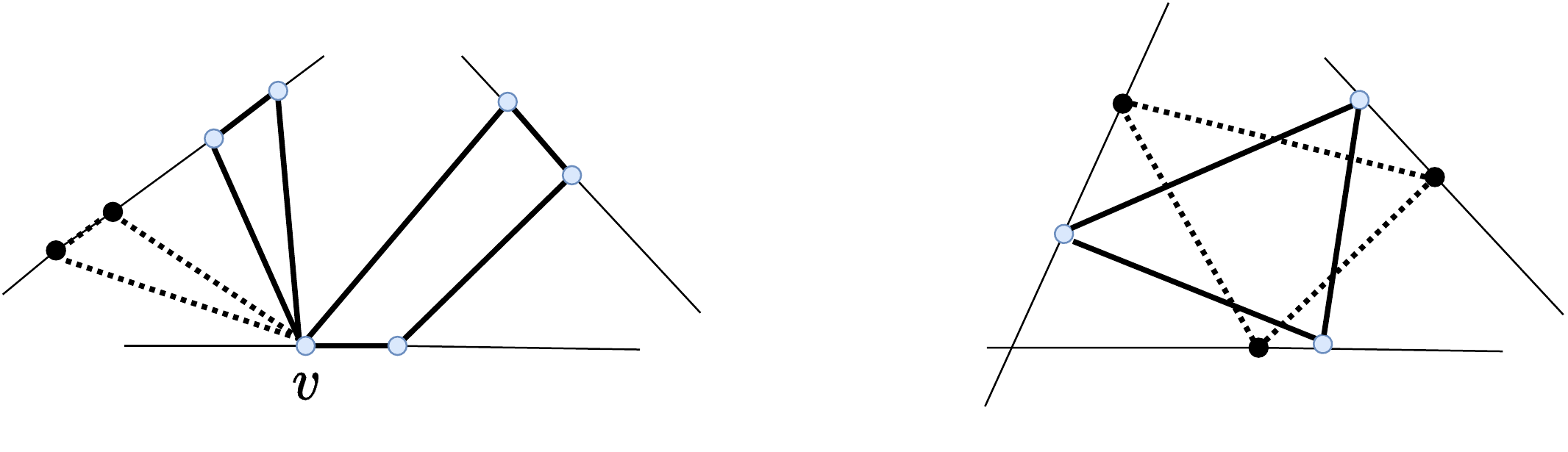}
    %{Example 2 (P-connected).pdf}%{Rigidity-3.jpg}
    \caption{Examples of $\mathcal{L}$-frameworks which are not globally $\mathcal{L}$-rigid. 
    %On the left, the underlying graph is not P-connected.  
    On the left the underlying graph is redundantly $\mathcal{L}$-rigid but not P-connected.
    On the right the underlying graph is P-connected but 
    not redundantly $\mathcal{L}$-rigid.
    In both cases, a global flex is indicated with dashed bars.
}
    \label{Example 2 (P-connected)}
\end{figure}

\begin{theorem}
    \label{thm_nec}
    Suppose that $\mL$ is in general position and $G$ is a crossing partitioned graph. If $G$ is generically globally $\mL$-rigid then $G$ is P-connected and redundantly $\mL$-rigid. 
\end{theorem}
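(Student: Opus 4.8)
The plan is to derive Theorem~\ref{thm_nec} as a direct combination of the three necessary conditions already established, after unpacking the two composite definitions (P-connectedness and redundant $\mL$-rigidity) into their constituent parts. Since the statement asserts a generic property of the graph $G$, the first step is to fix an arbitrary $\mL$-generic configuration $p \in \mL^\mV$; by the definition of generic global $\mL$-rigidity, the framework $(G,p)$ is then globally $\mL$-rigid, and every lemma we need applies to this single framework.

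First I would address P-connectedness, which by definition has two parts. The condition that every proper connected component $H$ of $G$ satisfies $|\mL[H]| \geq 3$ is precisely the conclusion of Lemma~\ref{lem_Pconn0}, and the condition that every connected component $H'$ of $G - v$ satisfies $|\mL[H']| \geq 2$ for every $v \in \mV$ is precisely the conclusion of Lemma~\ref{lem_Pconn}. Both lemmas take as hypothesis exactly that $(G,p)$ is $\mL$-generic and globally $\mL$-rigid, which we have arranged, so P-connectedness follows immediately.

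Next I would establish redundant $\mL$-rigidity, namely that $G - e$ is $\mL$-rigid for every $e \in \mE$. Fix an edge $e = uv$. Theorem~\ref{thm:hendrickson}, applied to our globally $\mL$-rigid framework $(G,p)$, gives that $(G-uv,p)$ is infinitesimally $\mL$-rigid. Because $p$ is $\mL$-generic, this is equivalent (as noted after Proposition~\ref{prop:generic_rigidity}, the maximum rank of the rigidity matrix is attained at all generic configurations) to $G - e$ being $\mL$-rigid. Since $e$ was arbitrary, $G$ is redundantly $\mL$-rigid. Combining the two paragraphs yields both required properties, completing the proof.

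I do not expect any genuine obstacle here, since this theorem is assembled entirely from results proved earlier in the excerpt; the work is bookkeeping rather than new mathematics. The one point requiring mild care is the translation of Theorem~\ref{thm:hendrickson}'s infinitesimal conclusion for the \emph{specific} generic $p$ into the \emph{graph-level} statement that $G-e$ is $\mL$-rigid. This is handled by the remark following Proposition~\ref{prop:generic_rigidity}, which guarantees that infinitesimal $\mL$-rigidity at one generic point is equivalent to $\mL$-rigidity of the graph. One should also note that redundant $\mL$-rigidity presupposes $G$ itself is $\mL$-rigid, but this too is immediate: global $\mL$-rigidity trivially implies $\mL$-rigidity, and Proposition~\ref{prop:generic_rigidity} converts the latter into infinitesimal $\mL$-rigidity of $(G,p)$.
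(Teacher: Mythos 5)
Your proposal is correct and matches the paper exactly: the paper derives Theorem~\ref{thm_nec} as an immediate consequence of Lemma~\ref{lem_Pconn0}, Lemma~\ref{lem_Pconn} and Theorem~\ref{thm:hendrickson}, which is precisely your assembly. Your additional care in converting the pointwise infinitesimal conclusion of Theorem~\ref{thm:hendrickson} into graph-level $\mL$-rigidity via the remark after Proposition~\ref{prop:generic_rigidity} is the same (implicit) step the paper relies on.
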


\section{Inductive Properties}
\label{sec:exten}

Our goal is to prove the converse of Theorem~\ref{thm_nec}. The basic strategy will be 
induction on the number of vertices of the graph and in
this section we will analyse the geometric properties of the relevant inductive operations.
For a graph $G$, a {\em subdivision} of an edge $uv$ replaces $uv$ with new edges $uw$ and $wv$ by adding a new vertex $w$ of degree two.
The inverse operation is called {\em smoothing} at $w$.
We will show that the subdivision operation preserves generic global $\mL$-rigidity. We begin with an elementary geometric lemma.

\begin{lemma}
\label{lem_geom1}
Suppose that $L$ is a line in $\mathbb R^d$ with equation $Ax=b$ and that $p_1,p_2$ are distinct points in $\mathbb R^d$. Let $\mathbb F$ be the smallest subfield of $\mathbb R$ that contains all the entries of $A,b,p_1,p_2$. Let $f:L \rightarrow \mathbb R^2$ be given by $f(x) = (\|x-p_1\|^2,\|x-p_2\|^2)$. Then the following hold:
\begin{enumerate}
    \item[{\rm (1)}] If the line segment $[p_1,p_2]$ is perpendicular to $L$ then $f(L)$ is a half-line defined over $\mathbb F$.
    \item[{\rm (2)}] If $[p_1,p_2]$ is not perpendicular to $L$ then $f(L)$ is a parabola that is defined over $\mathbb F$. In particular $f(L)$ is an irreducible algebraic set defined over $\mathbb F$ in this case.
\end{enumerate}
\end{lemma}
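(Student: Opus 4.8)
The plan is to parameterise $L$ over $\mathbb{F}$ and reduce everything to elementary facts about a pair of quadratics in a single real variable. Since $L$ is the solution set of $Ax = b$ with $A$ and $b$ over $\mathbb{F}$, I can pick a basepoint $x_0 \in L$ by solving this linear system over $\mathbb{F}$, and a direction vector $v$ spanning the one-dimensional kernel of $A$ with entries in $\mathbb{F}$. Writing $x(t) = x_0 + tv$ and expanding the squared norms gives
\begin{equation*}
f(x(t)) = \left(a t^2 + b_1 t + c_1,\ a t^2 + b_2 t + c_2\right),
\end{equation*}
where $a = \|v\|^2 > 0$ and all of $a, b_1, b_2, c_1, c_2$ lie in $\mathbb{F}$. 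The decisive identity is $b_1 - b_2 = 2\, v \cdot (p_2 - p_1)$, so that $b_1 = b_2$ holds exactly when $[p_1,p_2]$ is perpendicular to $L$; this is what splits the two cases. Setting $(X,Y) = f(x(t))$, the combination $X - Y = (b_1 - b_2) t + (c_1 - c_2)$ will carry all the information.

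In case (1) we have $b_1 = b_2$, so $X - Y = c_1 - c_2$ is constant and the image lies on the line $X - Y = c_1 - c_2$. Since $a > 0$, as $t$ ranges over $\mathbb{R}$ the coordinate $X$ attains precisely the values in $[\,c_1 - b_1^2/(4a),\ \infty)$, so $f(L)$ is the half-line $\{(X,Y) : Y = X - (c_1 - c_2),\ X \geq c_1 - b_1^2/(4a)\}$, whose slope and endpoint lie in $\mathbb{F}$; this gives the first claim. In case (2) we have $b_1 \neq b_2$, so $u := X - Y$ is an affine bijection of $\mathbb{R}$. Solving for $t$ in terms of $u$ and substituting into the expression for $X$ produces a relation $X = \alpha u^2 + \beta u + \gamma$ with $\alpha \neq 0$ and $\alpha, \beta, \gamma \in \mathbb{F}$. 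As $u$ ranges over all of $\mathbb{R}$, the image $f(L)$ is exactly the zero set of $g(X,Y) = \alpha(X-Y)^2 + \beta(X-Y) - X + \gamma$, which is defined over $\mathbb{F}$.

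The only non-routine point, and the step I expect to be the main obstacle, is verifying that $g$ is irreducible (equivalently, that this conic is a genuine parabola rather than a degenerate one). I would argue directly: in any factorisation of $g$ into two real linear forms, the degree-two parts must multiply to $\alpha(X-Y)^2$, which is a scalar times a perfect square, so both factors are forced to be proportional to $(X-Y)$. Then the degree-one part of the product is a scalar multiple of $(X-Y)$, and hence has the coefficient of $X$ equal to minus the coefficient of $Y$. But the degree-one part of $g$ is $(\beta - 1)X - \beta Y$, for which this would force $\beta - 1 = \beta$, a contradiction. Hence $g$ is irreducible and $f(L)$ is an irreducible algebraic set defined over $\mathbb{F}$, completing the second claim. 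Everything preceding this is routine parameterisation, substitution, and the surjectivity bookkeeping needed to guarantee that $f(L)$ is the \emph{entire} half-line or parabola rather than merely a subset of it.
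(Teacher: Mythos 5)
Your proof is correct, and it reaches the same dichotomy as the paper by a more explicit computational route. The paper's proof decomposes $f$ via the orthogonal projection $\pi$ onto $L$: by Pythagoras, $f(x) = (\|x-\pi(p_1)\|^2,\|x-\pi(p_2)\|^2)$ plus a constant vector, and the case split is on whether $\pi(p_1)=\pi(p_2)$ (exactly your condition $b_1=b_2$, i.e.\ $v\cdot(p_2-p_1)=0$); the parabola case is then dismissed as ``elementary to check,'' and neither the irreducibility claim nor the definability over $\mathbb F$ is verified explicitly. You instead parameterise $L$ over $\mathbb F$ and eliminate the parameter via the combination $u=X-Y$, which cancels the common $t^2$ term; this yields the explicit implicit equation $g(X,Y)=\alpha(X-Y)^2+\beta(X-Y)-X+\gamma=0$ with coefficients in $\mathbb F$, the exactness of the image (every real zero of $g$ is attained, since $t\mapsto u$ is an affine bijection), and a clean irreducibility argument: any factorisation would force the linear part of $g$ to be proportional to $X-Y$, which fails because $(\beta-1,-\beta)$ is never proportional to $(1,-1)$. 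So your route supplies precisely the details the paper leaves implicit, at the cost of a little more computation. The one step you leave at the paper's level of informality is the passage from irreducibility of the polynomial $g$ to irreducibility of its real zero set as an algebraic set; this is harmless here because the curve is fully parameterised over $\mathbb R$ (so its real points are Zariski dense in the complex zero set of $g$), but it is worth noting that over $\mathbb R$ this implication is not automatic in general.
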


\begin{proof}
Let $\pi:\mathbb R^d \rightarrow L$ be the orthogonal projection. Then 
$$f(x) =
(\|x - \pi(p_1)\|^2, \|x-\pi(p_2)\|^2) + (\|p_1 -\pi(p_1)\|^2,\|p_2-\pi(p_2)\|^2).$$ Now if $\pi(p_1) =\pi(p_2)$ then $\{(\|x - \pi(p_1)\|^2, \|x-\pi(p_2)\|^2), x \in L\} = 
\{(a,a): a \geq 0\}$, whereas it is elementary to check that 
if $\pi(p_1) \neq \pi(p_2)$ then $\{(\|x - \pi(p_1)\|^2, \|x-\pi(p_2)\|^2), x \in L\}$ is a parameterisation of a parabola.
\end{proof}

\begin{example}\label{exm:lemma5_1}
In order to explain the intuition behind Lemma \ref{lem_geom1}, we can work with $d=2$ and choose local coordinates on a given line $L$ such that it is represented by the $x$-axis i.e., $L = \{(t,0): t \in \mathbb{R}\}$.
The two parts of the lemma are explained as follows:
\begin{enumerate}
    \item If the line segment $[p_1,p_2]$ is perpendicular to $L$, then $f(L)$ is a half-line. Without loss of generality we can choose $p_1 = (0,1)$ and $p_2 = (0,-1)$. Then $f(L)$ is given by $\{(t^2+1 ,t^2 +1): t \in \mathbb{R}\}$ which describes a half line in $\mathbb{R}^2$ starting at $(1,1)$ with a slope of $1$ from $x$-axis, as depicted in Figure~\ref{fig_lemma_5_1} (left).
    \item If the line segment $[p_1,p_2]$ is not perpendicular to $L$, then $f(L)$ is a parabola. Similar to the above case we can choose $p_1 = (0,1)$ and $p_2 =  (1,0)$, where $f(L)$ is given by $\{(t^2+1 ,t^2 +1 - 2t): t \in \mathbb{R}\}$. This is a parametric equation of parabola whose major-axis is rotated from the reference axis. See Figure~\ref{fig_lemma_5_1} (right).
    % I am attaching the code to generate the figures for the two cases here
    
    % import numpy as np
    % import matplotlib.pyplot as plt
    % def f(t):
    %     return t**2 + 1
    % def g(t):
    %     return t**2 + 1 
    % t = np.linspace ( start = -3.    # lower limit
    %             , stop = 4    # upper limit
    %             , num = 100000    
    %             )
    % x = f(t)
    % y = g(t)    
    % plt.plot(x, y)
    % plt.show()
\end{enumerate}

\begin{figure}[ht]
    \centering
    \includegraphics[width=0.4\textwidth]{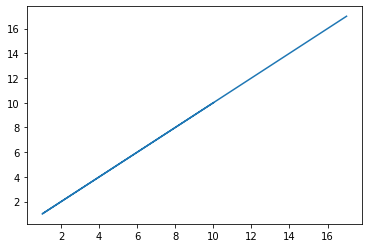}%{Rigidity-3.jpg}
    \includegraphics[width=0.4\textwidth]{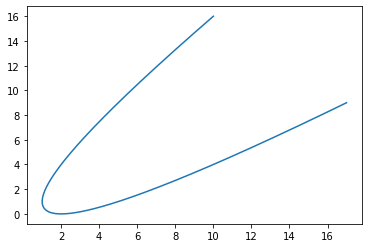}
    \caption{Figures for Example \ref{exm:lemma5_1}.}
    
    \label{fig_lemma_5_1}
    \end{figure}
\end{example}

We also remark the following elementary fact since it is used frequently in the subsequent discussion.
\begin{lemma}
\label{lem_symmetry}
Let $L_1, L_2$ be lines in $\mathbb{R}^d$ in general position, $\pi:\mathbb{R}^d\rightarrow L_2$ be the orthogonal projection to $L_2$, and let $f:L_1\rightarrow \mathbb{R}$ be given by $f(x)=\|x-\pi(x)\|^2$.
Then $f$ is a strictly convex function,
whose minimum is attained at the closest point to $L_2$
and which is symmetric with respect to the closest point.
\end{lemma}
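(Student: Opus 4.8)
The plan is to recognise $f$ as the squared distance from $x$ to the line $L_2$ and then reduce everything to a one-variable calculation. Since $\pi(x)$ is the orthogonal projection of $x$ onto $L_2$, it is the point of $L_2$ nearest to $x$, so $f(x)=\|x-\pi(x)\|^2=\operatorname{dist}(x,L_2)^2$. First I would fix a unit direction vector $v_1$ of $L_1$ and a basepoint $x_0\in L_1$, and likewise a unit direction vector $v_2$ and basepoint $y_0$ of $L_2$, and parametrise $L_1$ by arc length via $x(t)=x_0+tv_1$. Writing $w=x_0-y_0$ and using the standard formula $\operatorname{dist}(x,L_2)^2=\|x-y_0\|^2-((x-y_0)\cdot v_2)^2$, this turns $f$ into the explicit one-variable function
\[
g(t):=f(x_0+tv_1)=\|w+tv_1\|^2-\big((w+tv_1)\cdot v_2\big)^2 .
\]

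The key computation is to expand $g$ and read off its leading coefficient. A direct expansion gives $g(t)=\big(1-(v_1\cdot v_2)^2\big)t^2+(\text{linear in }t)+\text{const}$, so $g$ is a quadratic polynomial in $t$. The only point where the hypothesis enters is the sign of the leading coefficient: because $L_1$ and $L_2$ are in general position they are not parallel, so $v_1\cdot v_2\neq\pm1$ and hence $1-(v_1\cdot v_2)^2>0$. Therefore $g$ is a strictly convex quadratic, and since $t\mapsto x_0+tv_1$ is an affine isometry onto $L_1$, $f$ is strictly convex on $L_1$.

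It then remains to interpret the vertex of this parabola. A strictly convex one-variable quadratic $g(t)=a(t-t^\ast)^2+c$ with $a>0$ attains its unique minimum at $t=t^\ast$ and satisfies $g(t^\ast+s)=g(t^\ast-s)$ for all $s$; pulling this back along the arc-length parametrisation shows that $f$ attains its minimum at the single point $x^\ast=x_0+t^\ast v_1$ and is invariant under the reflection of $L_1$ in $x^\ast$, which is exactly the asserted symmetry. Finally, minimising $f$ over $L_1$ is by definition minimising $\operatorname{dist}(x,L_2)$ over $x\in L_1$, so the minimiser $x^\ast$ is precisely the closest point of $L_1$ to $L_2$, completing all three claims. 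I do not expect any genuine obstacle here: the entire content is the one-line observation that non-parallelism forces the leading coefficient $1-(v_1\cdot v_2)^2$ to be strictly positive, with the symmetry and the location of the minimum being immediate consequences of a parabola being symmetric about its vertex.
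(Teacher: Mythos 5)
Your proof is correct. Note that the paper states Lemma~\ref{lem_symmetry} without proof, presenting it as an elementary remark, so there is no proof of record to compare against; your argument — parametrising $L_1$ by arc length and reducing $f$ to the one-variable quadratic $g(t)$ whose leading coefficient $1-(v_1\cdot v_2)^2$ is strictly positive precisely because the lines are not parallel — is exactly the computation the authors leave implicit, and it is the same mechanism they do spell out in the proof of Lemma~\ref{lem_twoset}, where strict convexity of a squared-distance quadratic is likewise deduced from non-parallelism. All three conclusions (strict convexity, the minimum sitting at the closest point of $L_1$ to $L_2$, and the reflection symmetry about that point) follow correctly from your expansion, and you rightly observe that only the non-parallelism part of the general position hypothesis is actually used.
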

Lemma~\ref{lem_symmetry} and Lemma~\ref{lem_twoset} in particular imply that,
if $x_1$ and $x_2$ are two points on $L_1$ such that 
$\|x_1-\pi(x_1)\|=\|x_2-\pi(x_2)\|$,
then there is $\theta\in \isom(\{L_1, L_2\})$ such that 
$x_2=\theta x_1$.

The following technical lemma is a key observation in the proof of Theorem \ref{thm_main_henn}.

\begin{lemma}
\label{lem_tech}
Let $(G,p)$ be an $\mL$-generic $\mL$-constrained framework,
and let $u, v, w$ be distinct vertices of $G$ such that 
$uw$ and $vw$ are edges of $G$.
Suppose that $(G-w,p|_{\mV-w})$ is infinitesimally $\mL$-rigid. 
Then, for each $q \in M_{G}^{-1}(M_G(p))$,
we have $\|p(u)-p(v)\|=\|q(u)-q(v)\|$.
Moreover, at least one of the following holds:
\begin{itemize}
    \item $L_u=L_w$, 
    \item $L_v=L_w$, or 
\item  there exists $\gamma \in \isom(\{L_u, L_v,L_w\})$ such that $q(x) = \gamma p(x)$ for $x \in \{u,v,w\}$.
\end{itemize}
\end{lemma}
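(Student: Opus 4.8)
The plan is to fix an arbitrary $q \in M_G^{-1}(M_G(p))$ and exploit the infinitesimal $\mL$-rigidity of $G-w$ together with the $\mL$-genericity of $p$. Since every edge of $G-w$ is an edge of $G$, the restriction $q|_{\mV\setminus\{w\}}$ lies in $M_{G-w}^{-1}(M_{G-w}(p|_{\mV\setminus\{w\}}))$. Write $K_{-w}=\mathbb Q(\mL)(t_z : z\in\mV\setminus\{w\})$, where $t_z$ is the coordinate of $p(z)$ along its line; $\mL$-genericity says exactly that $t_w$ is transcendental over $K_{-w}$. First I would treat the main case where $(G-w)[\mL]$ is not parallel. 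Then Lemma~\ref{lem_infrigid} applies to $(G-w,p|_{\mV\setminus\{w\}})$ and shows that $M_{G-w}^{-1}(M_{G-w}(p|_{\mV\setminus\{w\}}))$ is finite and that all coordinates of $q|_{\mV\setminus\{w\}}$, in particular those of $q(u)$ and $q(v)$, are algebraic over $K_{-w}$.

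The geometric heart is an ``equal parabola'' argument. Consider $\Phi_p,\Phi_q:L_w\to\mathbb R^2$ given by $\Phi_p(x)=(\|x-p(u)\|^2,\|x-p(v)\|^2)$ and $\Phi_q(x)=(\|x-q(u)\|^2,\|x-q(v)\|^2)$. Genericity forces $[p(u),p(v)]$ not to be perpendicular to $L_w$ (a nontrivial condition, since $L_u$ and $L_w$ are not perpendicular), so by Lemma~\ref{lem_geom1} the image $\Phi_p(L_w)$ is an irreducible parabola defined over $K_{-w}$, while $\Phi_q(L_w)$ is an algebraic curve defined over the algebraic closure of $K_{-w}$. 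Because $q$ preserves the edges $uw$ and $vw$, the point $c:=\Phi_p(p(w))=(\|p(w)-p(u)\|^2,\|p(w)-p(v)\|^2)$ also equals $\Phi_q(q(w))$, hence lies on both curves. As $t_w$ is transcendental over $K_{-w}$ (and hence over its algebraic closure), $c=\Phi_p(p(w))$ has transcendence degree $1$ over that field, so it is a generic point of the irreducible curve $\Phi_p(L_w)$; since two distinct irreducible plane curves meet in a finite set, $c$ cannot lie on a different curve, and therefore $\Phi_p(L_w)=\Phi_q(L_w)$. Writing $\alpha_u,\alpha_v$ (resp. $\beta_u,\beta_v$) for the coordinates of the orthogonal projections of $p(u),p(v)$ (resp. $q(u),q(v)$) onto $L_w$, equality of these parametrised parabolas (up to an isometry reparametrisation of $L_w$) yields $\mathrm{dist}(q(u),L_w)=\mathrm{dist}(p(u),L_w)$, $\mathrm{dist}(q(v),L_w)=\mathrm{dist}(p(v),L_w)$, and $\beta_u-\beta_v=\pm(\alpha_u-\alpha_v)$.

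The distance equalities, via Lemma~\ref{lem_symmetry} and its accompanying remark, pin down $q(u)\in\{p(u),\rho_{uw}p(u)\}$ and $q(v)\in\{p(v),\rho_{vw}p(v)\}$, where $\rho_{uw}$ generates the order-two group $\isom(\{L_u,L_w\})$ when $L_u\neq L_w$. Using that $\rho_{uw}$ acts on the projection coordinate onto $L_w$ as the reflection about the foot of the common perpendicular, I would run a short case analysis over the four possibilities against the relation $\beta_u-\beta_v=\pm(\alpha_u-\alpha_v)$: the two ``mixed'' cases force one of $p(u),p(v)$ to be the closest point of its line to $L_w$ (non-generic), and the ``both reflected'' case forces the two feet on $L_w$ to coincide, which is precisely weak concurrency of $L_u,L_v,L_w$ and is excluded by general position unless $L_u=L_v$. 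Hence only $q|_{\{u,v\}}=\gamma\,p|_{\{u,v\}}$ survives, with $\gamma\in\isom(\{L_u,L_v,L_w\})$ (namely $\Id$, or the common involution when $L_u=L_v$); since $\gamma$ preserves $L_w$ and $\Phi_q$ is injective, $\gamma\,p(w)$ is the unique preimage realising $c$, so $q(w)=\gamma\,p(w)$ too. This gives the third bullet, while the first two bullets ($L_u=L_w$ or $L_v=L_w$) are exactly the degeneracies where the parabola has $d_u=0$ or $d_v=0$; the same computation still delivers the distance equality $\|q(u)-q(v)\|=\|p(u)-p(v)\|$ directly, which in all remaining cases is immediate because $\gamma$ is an isometry.

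It remains to dispose of the case where $(G-w)[\mL]$ is parallel. Since $\mL$ is in general position this forces $G-w$ to lie on a single line $L=L_u=L_v$, and as $G$ is crossing we get $L_w\neq L$. Here $q|_{\mV\setminus\{w\}}=\tau\circ p|_{\mV\setminus\{w\}}$ for some isometry $\tau$ of the line $L$, so distance preservation is immediate, and $\tau$ is identified with an element of $\isom(\{L,L_w\})=\isom(\{L_u,L_v,L_w\})$ by a direct genericity argument using the two edges at $w$ and the transcendence of $t_w$. I expect the principal obstacle to be the general-position case analysis of the third paragraph: one must invoke the non-perpendicularity and no-weak-concurrency hypotheses at exactly the right points to kill the spurious configurations, and must justify rigorously, through transcendence degree, that the generic point $\Phi_p(p(w))$ cannot lie on a curve $\Phi_q(L_w)$ distinct from $\Phi_p(L_w)$. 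The parallel subcase, by contrast, is routine but requires its own bookkeeping because there $q|_{\mV\setminus\{w\}}$ is no longer algebraic over $K_{-w}$.
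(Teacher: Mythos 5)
Your treatment of the main case, where $(G-w)[\mL]$ is not parallel, is correct and is essentially the paper's own argument: the same pair of maps $L_w\to\mathbb R^2$, the same use of Lemma~\ref{lem_geom1} together with Lemma~\ref{lem_infrigid} to place both image curves over (the algebraic closure of) $\mathbb Q(\mL,p|_{\mV-w})$, the same transcendence argument forcing the two curves to coincide, and the same case analysis via Lemma~\ref{lem_symmetry}, in which general position (no weak concurrency, no perpendicular pairs) eliminates the mixed cases and identifies the both-reflected case with $L_u=L_v$. The genuine problem is your final paragraph. When $(G-w)[\mL]$ is parallel, i.e.\ all of $G-w$ lies on one line $L$, you assert that infinitesimal $\mL$-rigidity of $(G-w,p|_{\mV\setminus\{w\}})$ forces $q|_{\mV\setminus\{w\}}=\tau\circ p|_{\mV\setminus\{w\}}$ for an isometry $\tau$ of $L$. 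That is exactly the statement that $(G-w,p|_{\mV\setminus\{w\}})$ is \emph{globally} $\mL$-rigid, which does not follow: by Theorem~\ref{thm:rigid}(1), infinitesimal rigidity in the parallel case amounts only to connectivity of $G-w$, and a connected but not $2$-connected framework on a line admits ``folded'' points in the fibre of $M_{G-w}$.

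Moreover, this case cannot be repaired, because the lemma is actually false there. Take $d=2$, $L_1:y=0$, $L_2:y=x-5/2$ (these are in general position), and let $G$ be the $4$-cycle with edges $ux,xv,vw,wu$, where $u,x,v\in V_1$ and $w\in V_2$, say $p(u)=(0,0)$, $p(x)=(3,0)$, $p(v)=(5,0)$, $p(w)=(13/5,1/10)$. Then $G-w$ is the path $u,x,v$, which is infinitesimally $\mL$-rigid. Now fold the path: $q(u)=(t,0)$, $q(x)=(t+3,0)$, $q(v)=(t+1,0)$. For each $t$, the two points $y$ satisfying $\|y-q(u)\|=\|p(w)-p(u)\|$ and $\|y-q(v)\|=\|p(w)-p(v)\|$ are $(t+1,\pm\sqrt{577}/10)$; as $t$ varies these sweep out two horizontal lines, and since $L_2$ is not horizontal a suitable choice of $t$ places one of them on $L_2$. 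This yields $q\in M_G^{-1}(M_G(p))$ with $\|q(u)-q(v)\|=1\neq 5=\|p(u)-p(v)\|$, so the distance conclusion fails, and hence so does the third bullet (any such $\gamma$ would be an isometry). The construction is stable under perturbation of $p$, so it also occurs at $\mL$-generic $p$. Note that the paper's own proof silently excludes this situation: it applies Lemma~\ref{lem_infrigid} to $(G-w,p|_{\mV-w})$, whose hypothesis is precisely that $(G-w)[\mL]$ is not parallel, and in every application of Lemma~\ref{lem_tech} in the paper the graph $G-w$ is crossing, so no harm is done there. Your instinct that the parallel case needs separate attention was therefore sound, but the correct resolution is to exclude it by hypothesis, not to attempt a proof of a statement that fails.
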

\begin{proof}
Pick any $q\in M_G^{-1}(M_G(p))$,
and define $f:L_w \rightarrow \mathbb R^2$ by $f(x) = (\|x-p(u)\|^2, \|x-p(v)\|^2)$ and 
$g:L_w \rightarrow \mathbb R^2$ by $g(x) = (\|x-q(u)\|^2, \|x-q(v)\|^2)$. 
Since $p$ is generic 
it follows from Lemma~\ref{lem_geom1} that $f(L_w)$ is a non-linear irreducible algebraic curve defined over $\mathbb Q(\mL,p|_{\mV- w})$. 
On the other hand, by Lemma~\ref{lem_geom1}, $g(L_w)$ is either an irreducible algebraic curve over $\mathbb Q(\mL,q|_{\mV-w})$ or is contained in a line in $\mathbb R^2$ that is defined over 
$\mathbb Q(\mL,q|_{\mV-w})$. 
Moreover, by Lemma~\ref{lem_infrigid}, $\mathbb Q(\mL,q|_{\mV-w})$ is contained in the algebraic closure of $\mathbb Q(\mL,p|_{\mV- w})$. 
Hence, $g(L_w)$ is also defined over $\mathbb Q(\mL,p|_{\mV- w})$.
Now observe that $f(p(w)) \in f(L_w) \cap g(L_w)$. Since at least one coordinate of $p(w)$ is transcendental over $\mathbb Q(\mL,p|_{\mV- w})$, at least one coordinate of $f(p(w))$ is transcendental over $\mathbb Q(\mL,p|_{\mV- w})$.
Since  each point in $f(L_w) \cap g(L_w)$ would be algebraic over $\mathbb Q(\mL,p|_{\mV- w})$ if it were 0-dimensional,  it follows that $f(L_w) \cap g(L_w)$ is not 0-dimensional.
Since $f(L_w)$ is irreducible, it further implies  that 
$f(L_w) = g(L_w)$.

Let $\pi:\mathbb R^d \rightarrow L_w$ be the orthogonal projection. 

   \begin{claim}
    \label{clm_geometric1}
        It follows that 
        \begin{enumerate}
            \item[{\rm (a)}] $\|p(u) - \pi(p(u))\| = \|q(u) - \pi(q(u))\|$ and  
                $\|p(v) - \pi(p(v))\| = \|q(v) - \pi(q(v))\|$, and
            \item[{\rm (b)}] $\|\pi(p(u)) - \pi(p(v))\| = \|\pi(q(u)) - \pi(q(v))\|$.
        \end{enumerate}
    \end{claim}
    \begin{proof}[Proof of claim.]
        Observe that 
        \begin{align*}
        \|p(u) -\pi(p(u))\| &= \min\{\sqrt x: (x,y) \in f(L_w)\} \\
        &= \min\{\sqrt x: (x,y) \in g(L_w)\} = \|q(u) - \pi(q(u))\|,
        \end{align*}
        where the second equation follows from $f(L_w)=g(L_w)$. This proves (a). 
        For (b), we have
        \begin{align*}
        &\|\pi(p(u)) - \pi(p(v))\| \\&= \min\{ \sqrt{x -\|p(u) -\pi(p(u))\|^2} +\sqrt{y-\|p(v) -\pi(p(v))\|^2}: (x,y) \in f(L_w)\}, \\
        &\|\pi(q(u)) - \pi(q(v))\| \\&= \min\{ \sqrt{x  - \|q(u) -\pi(q(u))\|^2} +\sqrt{y-\|q(v) -\pi(q(v))\|^2}: (x,y) \in g(L_w)\}. 
        \end{align*}
        Using (a) and $f(L_w)=g(L_w)$, (b) follows. 
    \end{proof}
By Claim~\ref{clm_geometric1}(a) and Lemma~\ref{lem_symmetry}, at least one of the followings hold for vertex $u$:
\begin{itemize}
\item $L_u=L_w$,
\item $p(u)=q(u)$, or
\item $p(u)=\theta q(u)$, where $\theta$ is the nontrivial element in $\isom(\{L_u, L_w\})$.
\end{itemize}
Indeed, if $L_u\neq L_w$ and $p(u)\neq q(u)$, then Claim~\ref{clm_geometric1}(a) and Lemma~\ref{lem_symmetry} imply that $p(u)$ and $q(u)$ are symmetric with respect to the closest point of $L_u$ to $L_w$, and hence 
$p(u)=\theta q(u)$ for the nontrivial element  $\theta$ in $\isom(\{L_u, L_w\})$.
The corresponding property also holds for vertex $v$.

Suppose $L_u=L_w$.
By Claim~\ref{clm_geometric1},
$\|p(v)-\pi(p(v))\|=\|q(v)-\pi(q(v))\|$
and $\|p(u)-\pi(p(v))\|=\|q(u)-\pi(q(v))\|$ hold,
and by Pythagoras theorem we get 
$\|p(u)-p(v)\|=\|q(u)-q(v)\|$ as required.

Symmetrically, the statement holds if $L_v=L_w$.
Hence, in the subsequent discussion, we assume $L_u\neq L_w\neq L_v$.

Suppose that $q(u) = p(u)$ and $q(v) = p(v)$. Then 
$\|p(u)-p(v)\|=\|q(u)-q(v)\|$ clearly holds,
and it remains to show the existence of an isometry $\gamma\in \isom(\{L_u, L_v, L_w\})$  as in the statement. If $q(w) = p(w)$ then this  is true (by taking $\gamma = \Id$), so assume $q(w) \neq p(w)$. Since $M_G(q) = M_G(p)$ it follows that $L_w$ is perpendicular to the line through $p(u)$ and $p(v)$,
which contradicts the  fact that $\mL$ is in general position and that $p$ is $\mL$-generic. 

Thus, without loss of generality, we can assume that $q(v) \neq p(v)$. Then, $q(v) = \gamma p(v)$ holds for  the non-trivial isometry $\gamma$ of $\{L_v,L_w\}$. In particular $y = (\pi(p(v))+\pi(q(v)))/2$ is the point on $L_w$ that is closest to $L_v$.
If $q(u) = p(u)$ then it follows from 
 Claim \ref{clm_geometric1}(b) that $\pi(p(u)) = y$, 
contradicting the fact that $p$ is $\mL$-generic. Therefore $q(u) \neq p(u)$, and hence  $q(u) = \tau p(u)$ holds for the non-trivial isometry $\tau$ of $\{L_u,L_w\}$.
In particular, $z = (\pi(p(u)) + \pi(q(u)))/2$ is the closest point on $L_w$ to $L_u$. 

Now by Claim~\ref{clm_geometric1}(b) we have $\pi(p(u)) - \pi(p(v)) = \pm (\pi(q(u)) - \pi(q(v)))$. 
If $\pi(p(u)) - \pi(p(v)) = \pi(q(u)) - \pi(q(v))$ then 
$$\pi(p(u)) - z = \frac{1}{2}(\pi(p(u)) - \pi(q(u))) = \frac{1}{2}(\pi(p(v))-\pi(q(v))) = \pi(p(v)) - y.$$ 
However, this contradicts the fact that $p$ is $\mL$-generic since the points $y,z$ have coordinates that lie in $\mathbb Q(\mL)$.

Therefore $\pi(p(u)) - \pi(p(v)) = -(\pi(q(u)) - \pi(q(v)))$ and so $y = z$. Since $\mL$ is in general position (in particular no three lines are weakly concurrent), this implies that $L_u = L_v$ and so $\tau = \gamma$ is the non-trivial isometry of $\{L_u,L_v,L_w\}$.
Hence, $\|p(u)-p(v)\|=\|q(u)-q(v)\|$ follows. 

Now suppose that $q(w) \neq \gamma p(w)$. Then it follows that $L_w$ is perpendicular to $L_u$ contradicting the fact that $\mL$ is in general position. Therefore $q(x) = \gamma p(x)$ for all $x \in \{u,v,w\}$. This completes the proof.
\end{proof}

We are ready to show that global $\mL$-rigidity is preserved by subdivision.

\begin{theorem}
\label{thm_main_henn}
Let $(G,p)$ be an  $\mL$-generic $\mL$-constrained framework and  $w$ be a vertex of degree two. 
Let $G'$ be the graph obtained from $G$ by smoothing at $w$,
and let $p':\mV-w \rightarrow \mathbb R^d$ be the restriction of $p$. If $(G',p')$ is globally $\mL$-rigid, then $(G,p)$ is globally $\mL$-rigid.
\end{theorem}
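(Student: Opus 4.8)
The plan is to take an arbitrary $q \in M_G^{-1}(M_G(p))$ and manufacture $\theta' \in \isom(\mL[G])$ with $q = \theta'\circ p$. Write $u,v$ for the two neighbours of $w$, so that $G' = (G-w)+uv$ and $p' = p|_{\mV-w}$. First I would record that $p'$ is $\mL$-generic for $G'$: since $\td(\mathbb Q(\mL,p):\mathbb Q(\mL)) = |\mV|$ and each vertex constrained to a line contributes exactly one transcendental, deleting $w$ leaves $\td(\mathbb Q(\mL,p'):\mathbb Q(\mL)) = |\mV|-1 = |V(G')|$. Thus the hypotheses of the results quoted below are all available for $(G',p')$.

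The key reduction is to prove $\|q(u)-q(v)\| = \|p(u)-p(v)\|$: once this holds, $q'=q|_{\mV-w}$ satisfies $M_{G'}(q')=M_{G'}(p')$, because every edge of $G'$ other than $uv$ is an edge of $G$ not meeting $w$ and hence is preserved by $q$. Global $\mL$-rigidity of $(G',p')$ then yields $q' = \theta\circ p'$ for some $\theta \in \isom(\mL[G'])$. To obtain the distance equality I would apply Lemma~\ref{lem_tech} to $(G,p)$ at the degree-two vertex $w$, which requires $(G-w,p')$ to be infinitesimally $\mL$-rigid. This holds because $G-w = G'-uv$: if $uv \notin \mE(G)$, then Theorem~\ref{thm:hendrickson} applied to the globally $\mL$-rigid framework $(G',p')$ gives precisely that $(G'-uv,p')$ is infinitesimally $\mL$-rigid; if instead $uv \in \mE(G)$, then $G-w=G'$ is already infinitesimally $\mL$-rigid by Proposition~\ref{prop:generic_rigidity}. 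Either way Lemma~\ref{lem_tech} delivers the required equality (together with a trichotomy that I use below).

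It then remains to extend $\theta$ to some $\theta'\in\isom(\mL[G])$ and to check $q(w)=\theta'p(w)$. If $L_w\in\mL[G']$ then $\mL[G]=\mL[G']$ and $\theta'=\theta$ works; if $L_w\notin\mL[G']$ and $\theta=\Id$, then extending by the identity on $L_w$ gives $\theta'=\Id\in\isom(\mL[G])$. The only problematic configuration is $L_w\notin\mL[G']$ with $\theta\neq\Id$: by Lemma~\ref{lem_isomgroup} this forces $|\mL[G']|=2$, hence $|\mL[G]|=3$ and $\isom(\mL[G])$ trivial, so no nontrivial extension can exist. I would eliminate this case using the ``moreover'' clause of Lemma~\ref{lem_tech}: here $L_w\neq L_u,L_v$, so there is $\gamma\in\isom(\{L_u,L_v,L_w\})$ with $q=\gamma p$ on $\{u,v,w\}$; comparing $q(u)=\theta p(u)=\gamma p(u)$ at the generic point $p(u)$ forces $\theta|_{L_u}=\gamma|_{L_u}$. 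But $\theta|_{L_u}$ is the reflection about the foot of the common perpendicular of the two lines of $\mL[G']$ by Lemma~\ref{lem_twoset}, whereas $\gamma|_{L_u}$ is either the identity or a reflection about a \emph{different} point, since no three of the relevant lines are weakly concurrent. This contradiction rules out the bad case, so $\theta'$ always exists. Finally, $q(w)$ and $\theta'p(w)$ both lie on $L_w$ at distance $\|p(w)-p(u)\|$ from $q(u)=\theta'p(u)$ and at distance $\|p(w)-p(v)\|$ from $q(v)=\theta'p(v)$; since $p$ is $\mL$-generic the feet of the perpendiculars from $q(u)$ and $q(v)$ to $L_w$ are distinct, so the two pairs of distance-solutions cannot coincide as sets, giving a unique common point and hence $q(w)=\theta'p(w)$. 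Thus $q=\theta'\circ p$, as required.

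The main obstacle is the third paragraph. The isometry $\theta$ is produced from $G'$, and $\isom(\mL[G'])$ can be strictly larger than $\isom(\mL[G])$ precisely when deleting $w$ drops the number of occupied lines from three to two. Reconciling the global symmetry inherited from $G'$ with the local distance data at $w$ — that is, showing such a spurious $\theta$ cannot actually arise, and that otherwise $\theta$ extends and the two constraints at $w$ pin $q(w)$ down uniquely — is the crux of the argument, and it is exactly here that general position (no weakly concurrent triples) and $\mL$-genericity are indispensable.
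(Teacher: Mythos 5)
Your argument follows the paper's proof almost step for step: infinitesimal $\mL$-rigidity of $(G-w,p')$ obtained from Theorem~\ref{thm:hendrickson} applied to $(G',p')$ (your sourcing of this step is in fact cleaner than the paper's), Lemma~\ref{lem_tech} at $w$ to get $\|q(u)-q(v)\|=\|p(u)-p(v)\|$, global rigidity of $(G',p')$ to produce $\theta\in\isom(\mL[G'])$, and then a reconciliation of $\theta$ with $\isom(\mL[G])$ via weak non-concurrency, plus a genericity argument pinning down $q(w)$ (the paper's first case does this by the same perpendicularity observation).

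The gap is in your third paragraph: from ``$L_w\notin\mL[G']$ and $\theta\neq\Id$'' you infer via Lemma~\ref{lem_isomgroup} that $|\mL[G']|=2$. That lemma only excludes $|\mL[G']|\geq 3$; it allows $|\mL[G']|=1$, in which case $\isom(\mL[G'])$ is the infinite one-dimensional Euclidean group and nontrivial $\theta$ certainly exists. This sub-case is not vacuous under the theorem's hypotheses: take $G'$ to be a triangle with all vertices in one class $V_1$ (on a line $L_1$); being $2$-connected it is generically globally $\mL$-rigid --- this is exactly the non-crossing, classical one-dimensional situation --- and let $w\in V_2$, so that $G$ is still crossing. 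Your elimination argument then collapses: $\theta|_{L_u}$ need not be a reflection about the foot of a common perpendicular, since $\mL[G']$ contains no second line; worse, the configuration cannot be ``ruled out'' at all, because if $q=\rho p$ for the half-turn $\rho\in\isom(\mL[G])$ then $\theta=\rho|_{L_1}\neq\Id$ genuinely occurs. What must be proved in this sub-case is that $\theta$ is the restriction of an element of $\isom(\mL[G])$, not that it cannot exist. The paper never meets this case because its Case 2 first establishes $|\mL[G-w]|\geq 2$ (P-connectivity, i.e.\ the necessary conditions of Section~\ref{sec:nec} applied to the globally rigid smoothed framework) before running the weak-concurrency argument. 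Your proof can be repaired the same way whenever $G'$ is crossing, or directly: when $|\mL[G']|=1$, the trichotomy of Lemma~\ref{lem_tech} gives $\gamma\in\isom(\{L_u,L_v,L_w\})$ with $q=\gamma p$ on $\{u,v,w\}$, and since $\theta$ and $\gamma|_{L_1}$ agree at the two distinct generic points $p(u),p(v)$ of the single line $L_1=L_u=L_v$, they coincide on that line, whence $q=\gamma p$ on all of $\mV$ with $\gamma\in\isom(\mL[G])$.
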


\begin{proof}
Let $u,v$ be the neighbors of $w$ in $G$.
By Theorem~\ref{thm_nec}, $G$ is redundantly $\mL$-rigid.
Since $w$ is degree two in $G$, $G-w$ is $\mL$-rigid
and $(G-w,p')$ is infinitesimally $\mL$-rigid.

Let $q\in M_G^{-1}(M_G(p))$.
By Lemma~\ref{lem_tech}, $\|p(u)-p(v)\|=\|q(u)-q(v)\|$.
Since $(G',p')$ is globally $\mL$-rigid
it follows that $q'=\theta p'$ for some  $\theta \in \isom(\mL[G-w])$.
We split the proof into two cases.

Suppose $\isom(\mL[G])=\isom(\mL[G-w])$.
Then $\theta \in \isom(\mL[G-w])=\isom(\mL[G])$. Now, replacing $q$ by $\theta^{-1} q$ if necessary, we can assume that $q'=p'$. Therefore, since $q \neq p$,  $q(w) \neq p(w)$.
    Now since $\|p(u) - p(w)\| = \|q(u) - q(w)\| = \|p(u) - q(w)\|$ and similarly $\|p(v) -p(w) \| = \|p(v) - q(w)\|$, the line
    through $p(u)$ and $p(v)$ is perpendicular to $L_w$. This contradicts the facts that 
    $\mL$ is in general position  and $p$ is $\mL$-generic. 

Next, suppose that $\isom(\mL[G])\neq \isom(\mL[G-w])$.
$G$ is P-connected by Theorem~\ref{thm_nec},
 so $|\isom(\mL[G-w])|\geq 2$.
By Lemma~\ref{lem_isomgroup}, we have 
$|\isom(\mL[G-w])|=2$ and $|\isom(\mL[G])|=3$.
Hence $L_u\neq L_w\neq L_v$ follows.
So by Lemma~\ref{lem_tech}, there exists $\gamma\in \isom(\{L_u, L_v, L_w\})$ such that 
$q(x)=\gamma p(x)$ for $x\in \{u,v,w\}$.
If $\theta$ is the identity, 
then $p(u)=q(u)$ follows, and hence $\gamma$ is the identity as well. Then $p=q$ follows.

If $\theta$ is not the identity, 
then $\gamma$ is also not the identity.
Denote %\cL[G-w]
$\mL[G-w]=\{L_1, L_2\}$ and without loss of generality suppose $L_u=L_1$.
Then $\theta|_{L_1}$ is the reflection about the closest point $y$ to $L_2$ while $\gamma|_{L_1}$ is the reflection about the closest point $z$ to $L_w$.
By $\gamma p(u)= q(u) = \theta p(u)$, $y=z$ follows, contradicting that $L_1, L_2, L_w$ are not weakly concurrent. This completes the proof.
\end{proof}

\iffalse
\begin{corollary}
\label{cor_GRpluspath}
Suppose that $G = K \cup P$ where $K$ is a crossing subgraph with, $P$ is a path, internally vertex disjoint from $K$ with distinct endvertices $u,v\in V(K)$.
Suppose that $K$ is generically globally $\mL$-rigid. Then $G$ is generically globally $\mL$-rigid. 
\end{corollary}
\begin{proof}
We proceed by induction on $|V(P)|$. If $|V(P)| = 2$ then $K$ is a spanning subgraph of $G$ and the conclusion is immediate. If $|V(P)| = 3$ then $V(P) = \{u,w,v\}$. Let $K' = K +uv$ (possibly $K' = K$). Clearly $K'$ is generically globally $\mL$-rigid. By Theorem~\ref{thm_main_henn}, $G'= K'-uv +w+uw +vw$ is generically globally $\mL$-rigid. Now $G'$ is a spanning subgraph of $G$, so $G$ is generically globally $\mL$-rigid. 
Finally suppose that $|V(P)| \geq 4$ and choose $w \in V(P) \setminus \{u,v\}$. Suppose that $w$ has neighbours $y,z$ in $P$ and let $P' = P-w +yz$. By induction $K \cup P'$ is generically globally $\mL$-rigid. Now $G = (K \cup P') - yz +w + yw +zw$ and so, by Theorem~\ref{thm_main_henn}, $G$ is generically globally $\mL$-rigid.
\end{proof}
\fi

Next we prove a gluing property for generic global $\mL$-rigidity.

\begin{theorem}
\label{thm_gluing}
Suppose that $H,K$ are crossing partitioned graphs that  are both globally $\mL$-rigid.
If $V(H) \cap V(K)$ is non-empty 
or $|\mL[H]|\geq 3$ and $|\mL[K]|\geq 3$, then  $H \cup K$ is generically globally $\mL$-rigid. 
\end{theorem}

\begin{proof}
Suppose $V(H) \cap V(K)$ is non-empty.
Choose $u \in V(H) \cap V(K)$. Let $p,q\in \mL^{V(H) \cup V(K)}$ with $p$ $\mL$-generic and $M_{H\cup K}(q) = M_{H \cup K}(p)$. Since $H,K$ are generically
globally $\mL$-rigid, it follows that there exist $\theta \in \isom (\mL[H])$ and $\gamma \in \isom(\mL[K])$ such that $q(v) = \theta p(v)$ for $v \in V(H)$ and 
$q(w) = \gamma p(w)$ for $w \in V(K)$. If $\theta = \Id$ then since $u \in V(H) \cap V(K)$ we see that $\gamma p(u) = q(u) = \theta p(u) = p(u)$ and since $p$ is generic it follows that $\gamma = \Id$ and so $q = p$. Similarly if $\gamma = \Id$ then $q = p$. So we can assume that $\theta, \gamma$ are both non-trivial isometries. Since $\theta p(u) = \gamma p(u)$, Lemma~\ref{lem_isomgroup} implies  that $\mL[H] = \mL[K] = \mL[H \cup K] $ and that $\theta = \gamma \in \isom (\mL[H \cup K])$ and that $ q = \theta p$. 

Suppose $V(H) \cap V(K)$ is empty.
Then $|\mL[H]|\geq 3$ and $|\mL[K]|\geq 3$ hold,
and both $\isom(\mL[H])$ and $\isom(\mL[K])$ are trivial by Lemma~\ref{lem_isomgroup},
and the global rigidity of $H\cup K$ follows from that of $H$ and $K$.
\end{proof}

Note that the hypothesis that both $H$ and $K$ are crossing is required for Theorem~\ref{thm_gluing} to be valid.

\section{Basic Globally Rigid Graphs}
\label{sec:small}

In this section we establish generic global rigidity 
for certain classes of graphs which will be used as base cases in an inductive argument for our main result in the next section.
As usual $G$ is a partitioned graph and $\mL$ is in general position.
We begin with the case when $G$ is a triangle.

\begin{lemma}
\label{lem_triangle_flex}
Suppose that $T = (\mV, \mE)$ is the partitioned graph where $\mV$ is the disjoint union of $V_1$ and $V_2$, $V_1 = \{u,v\}, V_2 = \{w\}$ and $\mE = \{uv,vw,wu\}$. Let $p,q\in \mL^{\mV}$ with $p$ $\mL$-generic and suppose that $M_{T}(q) = M_T(p)$. Then $q(w) = \theta p(w)$ for some $\theta \in \isom(\mL[T])$.
\end{lemma}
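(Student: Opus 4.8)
The plan is to recognise the lemma as an immediate instance of Lemma~\ref{lem_tech}, applied to the degree-two vertex $w$ whose neighbours are $u$ and $v$. The only genuine verification needed beforehand is the infinitesimal rigidity hypothesis of that lemma for the deleted graph. So first I would record that $\mL[T]=\{L_1,L_2\}$ (both parts are non-empty) and that $\mL[T-w]=\{L_1\}$, since deleting $w$ leaves exactly the two vertices $u,v$ of $V_1$ joined by the single edge $uv$. As $\mL[T-w]$ consists of a single line it is parallel, and $T-w$ is connected, so Theorem~\ref{thm_rigidity_matroid}(1) shows $T-w$ is $\mL$-rigid; because the restriction $p|_{\{u,v\}}$ inherits $\mL$-genericity from $p$, the framework $(T-w,\,p|_{\{u,v\}})$ is infinitesimally $\mL$-rigid. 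This is precisely the standing hypothesis of Lemma~\ref{lem_tech}.

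Next I would apply Lemma~\ref{lem_tech} to $(T,p)$ with the distinguished triple $u,v,w$ (both $uw$ and $vw$ are edges) and an arbitrary $q\in M_T^{-1}(M_T(p))$, i.e.\ any $q$ with $M_T(q)=M_T(p)$. The lemma presents three alternatives. The first two, $L_u=L_w$ and $L_v=L_w$, both assert $L_1=L_2$ and are ruled out because $\mL$ is in general position (in particular no two lines coincide). Hence the third alternative must hold: there is $\gamma\in\isom(\{L_u,L_v,L_w\})$ with $q(x)=\gamma\, p(x)$ for every $x\in\{u,v,w\}$. Since $L_u=L_v=L_1$ and $L_w=L_2$, the set $\{L_u,L_v,L_w\}$ equals $\{L_1,L_2\}=\mL[T]$, so $\gamma\in\isom(\mL[T])$. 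Reading off the component at $w$ gives $q(w)=\gamma\, p(w)$ with $\gamma\in\isom(\mL[T])$, which is exactly the assertion (taking $\theta=\gamma$).

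I do not expect a serious obstacle on this route: once Lemma~\ref{lem_tech} is in hand, the proof is a short hypothesis check plus the elementary exclusion of the degenerate coincidences $L_1=L_2$. For completeness I would note the alternative, self-contained route of parametrising $L_1$ and $L_2$ by real coordinates and solving the three distance equations $\|q(u)-q(v)\|=\|p(u)-p(v)\|$ and the two edge equations to $w$ directly; there the difference of the two $w$-equations is linear in the coordinate of $q(w)$, and the remaining equation pins that coordinate to either $p(w)$ or its reflection in the closest point of $L_2$ to $L_1$ (the nontrivial element of $\isom(\{L_1,L_2\})$ by Lemma~\ref{lem_twoset}). The genuinely delicate part of that direct approach is excluding spurious algebraic solutions using $\mL$-genericity, which is exactly the content already packaged in the curve-intersection argument of Lemma~\ref{lem_tech}; for that reason invoking Lemma~\ref{lem_tech} is the economical choice.
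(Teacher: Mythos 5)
There is a genuine gap: Lemma~\ref{lem_tech} cannot be invoked here, because your application falls in exactly the case that its proof excludes and where, in fact, its stated trichotomy is false. The proof of Lemma~\ref{lem_tech} controls $q|_{\mV-w}$ by applying Lemma~\ref{lem_infrigid} to $(G-w,p|_{\mV-w})$, and Lemma~\ref{lem_infrigid} carries the hypothesis that the line set of $G-w$ is \emph{not} parallel. In your setting $\mL[T-w]=\{L_1\}$ is parallel (your verification that $(T-w,p|_{\{u,v\}})$ is infinitesimally $\mL$-rigid is correct under the literal definition, but that is not enough), and the field-containment step of Lemma~\ref{lem_tech} genuinely fails: a placement of $\{u,v\}$ on $L_1$ realising the length $\|p(u)-p(v)\|$ may be a translate of $p|_{\{u,v\}}$ by a transcendental amount, so $\mathbb Q(\mL,q|_{\mV-w})$ need not lie in the algebraic closure of $\mathbb Q(\mL,p|_{\mV-w})$. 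This is not just a defect of the proof. Work in the plane with $L_1$ the $x$-axis and $L_2$ the line through the origin with direction $(\cos\alpha,\sin\alpha)$, and write $p(u)=(a,0)$, $p(v)=(b,0)$, $p(w)=t(\cos\alpha,\sin\alpha)$ with $a,b,t$ algebraically independent over $\mathbb Q(\mL)$. Then
\[
q(u)=(a-2t\cos\alpha,\,0),\qquad q(v)=(b-2t\cos\alpha,\,0),\qquad q(w)=-t(\cos\alpha,\sin\alpha)
\]
satisfies $M_T(q)=M_T(p)$: it is the image of $p$ under the glide reflection ``reflect in $L_1$, then translate along $L_1$ by $-2t\cos\alpha$'', and one checks directly that $q(u)-q(w)$ and $p(u)-p(w)$ are mirror images, likewise for $v$. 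However, there is no $\gamma\in\isom(\{L_u,L_v,L_w\})=\isom(\{L_1,L_2\})$ with $q(x)=\gamma p(x)$ for all $x\in\{u,v,w\}$: restricted to $L_1$, such a $\gamma$ is either the identity (forcing $t\cos\alpha=0$) or the reflection about the origin (forcing $a=t\cos\alpha=b$), both impossible. Since also $L_u\neq L_w\neq L_v$, all three alternatives of Lemma~\ref{lem_tech} fail, so your step ``hence the third alternative must hold'' is wrong. (All of the paper's own applications of Lemma~\ref{lem_tech} have $\mL[G-w]$ non-parallel, which is the implicit scope of that lemma.)

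This counterexample also explains why Lemma~\ref{lem_triangle_flex} asserts control only of $q(w)$ and not of the whole configuration: above, $q(w)=\sigma p(w)$ for the half-turn $\sigma$, while $q(u),q(v)$ slide along $L_1$. The paper's proof is the short direct argument your fallback gestures at, and it needs no appeal to Lemma~\ref{lem_tech}: the triangles $p(u)p(v)p(w)$ and $q(u)q(v)q(w)$ are congruent and their bases lie on the same line $L_1$, so the distance from $w$ to $L_1$ is preserved, i.e.\ $\|q(w)-\pi(q(w))\|=\|p(w)-\pi(p(w))\|$ where $\pi$ is the orthogonal projection onto $L_1$; then Lemma~\ref{lem_symmetry} (strict convexity and symmetry of the squared distance function along $L_w$) yields $q(w)=\theta p(w)$ for some $\theta\in\isom(\{L_u,L_w\})=\isom(\mL[T])$. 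You should replace the reduction to Lemma~\ref{lem_tech} by this two-line geometric argument.
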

\begin{proof}
Let $\pi:\mathbb R^d \rightarrow L_u$ be the orthogonal projection and observe that, since the triangles $p(u),p(v),p(w)$ and $q(u),q(v),q(w)$ are congruent and $p(u),p(v),q(u),q(v) \in L_1$, we have that $|q(w) -\pi(q(w))| = |p(w) - \pi(p(w))|$. It follows from Lemma~\ref{lem_symmetry} that $q(w) = \theta p(w)$ for some $\theta \in \isom (\{L_w,L_u\})$ as required.
\end{proof}

Next we introduce three basic types of partitioned graphs
that will occur in our inductive argument in the next 
section. We refer to Figure~\ref{fig_types} for illustrations.
A partitioned graph $G$ is of 
\begin{itemize}
    \item {\em type 1} if it is a union of two cycles that have a single vertex $v$ in common, and, each of the cycles contains a crossing edge that is not incident to $v$. 
    \item {\em type 2} if it is a union of two disjoint cycles $C_1, C_2$ and a simple path $P$ with endpoints $v_1,v_2$ such that $\{v_i\} = V(P) \cap V(C_i), i =1,2$, and, such that for $i =1,2$, $C_i$ contains a crossing edge that is not incident to $v_i$.
    \item {\em type 3} if it is a union of a cycle $C$ and a simple path $P$ with endpoints $v_1,v_2 \in V(C)$ such that $V(P) \cap V(C) = \{v_1,v_2\}$ and there are crossing edges $e,f$ such that $e,f$ are not adjacent and so that each simple path of $C$ joining $v_1$ to $v_2$ contains one of $e,f$. 
\end{itemize}

\begin{figure}
    \centering
    \begin{tabular}{ccc}
        \begin{tikzpicture}
        \clip (-2.2,-1) rectangle (2.2,1);
    \node[vertex] (a) at (0,0){};
    \node[vertex] (b) at (-2,-0.6){};
    \node[vertex] (c) at (-2,0.6){};
    \node[vertex] (d) at (2,0.6){};
    \node[vertex] (e) at (2,-0.6){};
    \draw[edge,dash pattern=on 2pt off 4pt] (a) edge [bend right] (c);
    \draw[edge,dash pattern=on 2pt off 4pt] (a) edge [bend left] (b);
    \draw[edge,dash pattern=on 2pt off 4pt] (a) edge [bend left] (d);
    \draw[edge,dash pattern=on 2pt off 4pt] (a) edge [bend right] (e);
    \draw[edge] (b) -- (c);
    \draw[edge] (d) -- (e);
\end{tikzpicture}
& 
\begin{tikzpicture}
        \clip (-2.2,-1) rectangle (3.2,1);
    \node[vertex] (a) at (-0.2,0){};
    \node[vertex] (aa) at (1.2,0){};
    \node[vertex] (b) at (-2,-0.6){};
    \node[vertex] (c) at (-2,0.6){};
    \node[vertex] (d) at (3,0.6){};
    \node[vertex] (e) at (3,-0.6){};
    \draw[edge,dash pattern=on 2pt off 4pt] (a) edge [bend right]  (c);
    \draw[edge,dash pattern=on 2pt off 4pt] (a) edge [bend left] (b);
    \draw[edge,dash pattern=on 2pt off 4pt] (aa) edge [bend left] (d);
    \draw[edge,dash pattern=on 2pt off 4pt] (aa) edge [bend right] (e);
    \draw[edge,dash pattern=on 2pt off 4pt] (aa) -- (a);
    \draw[edge] (b) -- (c);
    \draw[edge] (d) -- (e);
\end{tikzpicture}
&
\begin{tikzpicture}
        \clip (-2.2,-1) rectangle (2.2,1);
    \node[vertex] (a) at (0,0.7){};
    \node[vertex] (aa) at (0,-0.7){};
    \node[vertex] (b) at (-2,-0.6){};
    \node[vertex] (c) at (-2,0.6){};
    \node[vertex] (d) at (2,0.6){};
    \node[vertex] (e) at (2,-0.6){};
    \draw[edge,dash pattern=on 2pt off 4pt] (a) edge [bend right] (c);
    \draw[edge,dash pattern=on 2pt off 4pt] (aa) edge [bend left] (b);
    \draw[edge,dash pattern=on 2pt off 4pt] (a) edge [bend left] (d);
    \draw[edge,dash pattern=on 2pt off 4pt] (aa) edge [bend right] (e);
    \draw[edge,dash pattern=on 2pt off 4pt] (aa) -- (a);
    \draw[edge] (b) -- (c);
    \draw[edge] (d) -- (e);
\end{tikzpicture}
 \\
 Type 1 & Type 2 & Type 3

    \end{tabular}

    \caption{The three basic types of graphs. The solid edges are crossing. The dotted arcs represent simple paths in the graph. In the case of the type 1 or type 2 graph, all of the dotted paths must have at least one edge. In the case of the type 3 graph, some of the dotted paths may have length zero as long as the vertical dotted path  in the centre has at least one edge and the two solid edges are vertex disjoint.}
    \label{fig_types}
\end{figure}
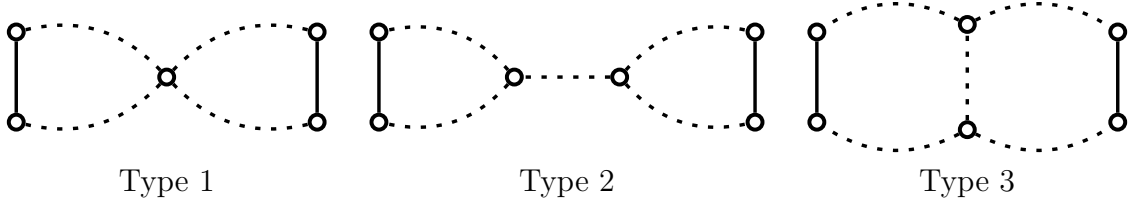

\begin{lemma}
\label{lem_type1_GR}
Suppose that $G$ is a type $1$ partitioned graph. Then $G$ is generically globally $\mL$-rigid.
\end{lemma}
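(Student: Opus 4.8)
The plan is to induct on $|\mathcal{V}|$ and use Theorem~\ref{thm_main_henn} to reduce to a single base case. In a type~1 graph the shared vertex $v$ has degree four and every other vertex has degree two. If one of the cycles $C_i$ has length at least four, it contains a degree-two vertex $w\neq v$ that is not an endpoint of its crossing edge $e_i$, and (as the two cycles meet only in $v$ and $C_i$ has length $\geq 4$) the two neighbours of $w$ in $C_i$ are non-adjacent in $G$; smoothing at $w$ therefore yields a smaller type~1 graph $G'$, with $C_i$ shortened but still passing through $v$ and still containing $e_i$. For generic $p$ the restriction $p'$ to $V(G')$ is again $\mathcal{L}$-generic, so if $G'$ is generically globally $\mathcal{L}$-rigid then Theorem~\ref{thm_main_henn} shows $(G,p)$ is globally $\mathcal{L}$-rigid; as $p$ was an arbitrary generic point, $G$ is generically globally $\mathcal{L}$-rigid. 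Iterating, I may assume both cycles are triangles, i.e.\ $G$ is the \emph{bowtie} $B$ on $\{v,a_1,b_1,a_2,b_2\}$ with triangles $va_1b_1$, $va_2b_2$ and crossing edges $a_1b_1$, $a_2b_2$.

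For the base case, fix a generic $p$ and $q$ with $M_B(q)=M_B(p)$. Because $a_ib_i$ is crossing, $L_{a_i}\neq L_{b_i}$, so at most one of $a_i,b_i$ lies on $L_v$; relabel each triangle so that $L_{a_i}\neq L_v$. The vertex $a_i$ has degree two with neighbours $v$ and $b_i$, and $B-a_i$ consists of the triangle on $\{v,a_{3-i},b_{3-i}\}$ together with the pendant edge $vb_i$; its single component contains a crossing cycle, so $B-a_i$ is $\mathcal{L}$-rigid by Theorem~\ref{thm_rigidity_matroid} and $(B-a_i,p|_{\mathcal{V}-a_i})$ is infinitesimally $\mathcal{L}$-rigid. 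Lemma~\ref{lem_tech} applied with $w=a_i$ then excludes the options $L_v=L_{a_i}$ and $L_{b_i}=L_{a_i}$, so it yields $\gamma_i\in\isom(\{L_v,L_{a_i},L_{b_i}\})$ with $q=\gamma_i p$ on $\{v,a_i,b_i\}$.

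It remains to reconcile $\gamma_1$ and $\gamma_2$ at the shared vertex. From $q(v)=\gamma_1 p(v)=\gamma_2 p(v)$ the restrictions $\gamma_1|_{L_v}$ and $\gamma_2|_{L_v}$ agree at $p(v)$, and each is either the identity or the reflection of $L_v$ about a point whose coordinates lie in $\mathbb{Q}(\mathcal{L})$. If $\gamma_1|_{L_v}=\Id$ then $q(v)=p(v)$; since $p(v)$ is transcendental over $\mathbb{Q}(\mathcal{L})$ it cannot be the (algebraic) centre of a non-trivial reflection, so $\gamma_2$ fixing $p(v)$ forces $\gamma_2=\Id$, and likewise $\gamma_1=\Id$, whence $q=p$. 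Otherwise $\gamma_1|_{L_v}$ is a non-trivial reflection; then $\{L_v,L_{a_1},L_{b_1}\}$ has only two lines by Lemma~\ref{lem_isomgroup}, forcing $L_{b_1}=L_v$, and $\gamma_1|_{L_v}$ reflects $L_v$ about its closest point to $L_{a_1}$. Running the same argument for $\gamma_2$ gives $L_{b_2}=L_v$ and that $\gamma_2|_{L_v}$ reflects about the closest point of $L_v$ to $L_{a_2}$. Two reflections of a line that agree at one point are equal, so these closest points coincide; as $\mathcal{L}$ is in general position, no three of its lines are weakly concurrent, and hence $L_{a_1}=L_{a_2}$. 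Thus $\mathcal{L}[B]=\{L_v,L_{a_1}\}$ and $\gamma_1=\gamma_2$ equals the unique non-trivial element $\theta$ of $\isom(\mathcal{L}[B])$, so $q=\theta p$ throughout. In all cases $q=\theta p$ for some $\theta\in\isom(\mathcal{L}[B])$, establishing the base case and completing the induction.

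The main obstacle is this last gluing step in the degenerate configurations $L_{b_i}=L_v$, where the groups $\isom(\{L_v,L_{a_i},L_{b_i}\})$ are non-trivial and $\gamma_1,\gamma_2$ need not be trivial; the substantive point is to exclude incompatible reflections, and this is precisely where the general position hypothesis — specifically that no three lines are weakly concurrent — together with the contrast between the transcendence of $p(v)$ and the algebraicity of the relevant closest points, does the work. By comparison the inductive reduction is routine, once one notes that smoothing an internal cycle vertex preserves the type~1 structure and that Theorem~\ref{thm_main_henn} passes global rigidity from the smaller graph up to the larger one.
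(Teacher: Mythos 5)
Your proposal is correct and follows essentially the same route as the paper's proof: induction via smoothing and Theorem~\ref{thm_main_henn} down to the five-vertex bowtie, then two applications of Lemma~\ref{lem_tech} (at a degree-two vertex of each triangle chosen off the shared vertex's line) reconciled at the shared vertex using genericity against the algebraicity of reflection centres and the no-weak-concurrency hypothesis. Your write-up is somewhat more explicit than the paper's in the degenerate case (deriving $L_{b_i}=L_v$ and $L_{a_1}=L_{a_2}$) and in checking that smoothing cannot create a parallel edge, but these are elaborations of the same argument rather than a different one.
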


\begin{proof}
We prove this by induction on the number of vertices. 
First observe that a type 1 partitioned graph has at least 5 vertices.
Suppose that $|\mV| = 5$.
So
$\mV = \{v_1,\dots,v_5\}$ and $\mE = \{v_1v_2,v_2v_3,v_3v_1,v_3v_4,v_4v_5,v_5v_3\}$ and note that we can choose the labelling so that 
$\{v_1,v_2\}, \{v_1,v_3\}, \{v_3,v_4\}, \{v_4,v_5\}$ are all crossing. 
Suppose that $p,q \in \mL^\mV$, $p$ is $\mL$-generic and
$M_{G}(q) = M_{G}(p)$. By Theorem~\ref{thm_rigidity_matroid}, $(G-v_1,p|_{\mV-v_1})$ is infinitesimally $\mL$-rigid and so by Lemma~\ref{lem_tech} we see that there is some isometry $\theta$ of $\{L_{v_1},L_{v_2},L_{v_3}\}$ such that $q(x) = \theta p(x)$ for $x \in \{ v_1,v_2,v_3\}$. 
Similarly there is some isometry $\gamma$ of
$\{L_{v_3},L_{v_4},L_{v_5}\}$ such that 
$q(x) = \gamma p(x)$ for $ x \in\{ v_3,v_4,v_5\}$. 
Now, if $\theta = \Id$ then $q(v_3) = p(v_3)$ and since $p$ is $\mL$-generic, 
$\gamma$ restricted to $L_{v_3}$ is the identity.
Hence by Lemma~\ref{lem_isomgroup}, $\gamma = \Id$. 
Therefore $q = p$ in this case. On the other hand if $\theta \neq \Id$ then, again using the fact that $p$ is $\mL$-generic it follows that $\theta|_{L_{v_3}} = \gamma|_{L_{v_3}}$
and, since $\mL$ is in general position, it follows that 
$\{L_{v_1},L_{v_2},L_{v_3}\} = \{L_{v_3},L_{v_4},L_{v_5}\}$ and hence that $\gamma = \theta \in \isom(\mL[G])$. 
Thus $q = \theta p$ for some $\theta\in \isom(\mL[G])$, as required. 

Now suppose that $|\mV| \geq 6$ and let $e,f$ be crossing edges, one in each cycle of $G$ that are both not incident with the vertex of degree 4. Since one of the cycles must have at least 4 vertices, there is 
some vertex $w \in \mV$ of degree 2 that is not incident to either $e$ or $f$. 
Then the graph $G'$ obtained from $G$ by smoothing at $w$ is again a type 1 partition graph. By induction, $G'$ is generically globally $\mL$-rigid, and, by Theorem~\ref{thm_main_henn},
$G$ is generically globally $\mL$-rigid.
\end{proof}

\begin{lemma}
\label{lem_type2_GR}
Suppose that $G$ is a type $2$ partitioned graph. Then $G$ is generically globally $\mL$-rigid.
\end{lemma}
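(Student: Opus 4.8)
The plan is to induct on $|\mV|$, using Theorem~\ref{thm_main_henn} to remove degree-two vertices one at a time. The first step is to identify the base case: the smallest type $2$ graph consists of two triangles $C_1=v_1a_1b_1$ and $C_2=v_2a_2b_2$, whose crossing edges are forced to be $a_1b_1$ and $a_2b_2$ (the only cycle edges not meeting $v_i$), joined by the single edge $v_1v_2$, so it has six vertices. One checks directly, via Theorem~\ref{thm_rigidity_matroid}, that this graph is P-connected and redundantly $\mL$-rigid, consistent with Theorem~\ref{thm_nec}.

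For the inductive step, suppose $|\mV|\geq 7$. Then either the connecting path has an internal vertex, or at least one of the cycles has length at least four. In the first case, smoothing an internal path vertex shortens the path; in the second, the long cycle contains a degree-two vertex distinct from its attachment vertex $v_i$ and not incident to its designated crossing edge, and smoothing there shortens the cycle while preserving that crossing edge. In every case the smoothed graph $G'$ is again of type $2$ and has fewer vertices, hence is generically globally $\mL$-rigid by induction; since the restriction of an $\mL$-generic $p$ stays $\mL$-generic for $G'$, Theorem~\ref{thm_main_henn} upgrades this to global $\mL$-rigidity of $(G,p)$.

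It remains to treat the base case, which carries all of the difficulty. Fix an $\mL$-generic $p$ and any $q$ with $M_G(q)=M_G(p)$. Removing either $a_1$ or $b_1$ leaves a connected graph containing the crossing cycle $C_2$, which is $\mL$-rigid by Theorem~\ref{thm_rigidity_matroid}, so Lemma~\ref{lem_tech} applies at both vertices. Since $a_1b_1$ is crossing we have $L_{a_1}\neq L_{b_1}$, which discards one degenerate alternative in each application; were neither application to produce an isometry, we would obtain $L_{v_1}=L_{a_1}$ and $L_{v_1}=L_{b_1}$ simultaneously, a contradiction. Hence at least one application yields $\theta_1\in\isom(\mL[C_1])$ with $q=\theta_1 p$ on $V(C_1)$, and symmetrically there is $\theta_2\in\isom(\mL[C_2])$ with $q=\theta_2 p$ on $V(C_2)$. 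As the base case has no internal path vertex, these two isometries already determine $q$ on all of $\mV$, and they are coupled only through the surviving edge, along which $\|\theta_1 p(v_1)-\theta_2 p(v_2)\|=\|p(v_1)-p(v_2)\|$.

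The combination step is the main obstacle. I would read this single edge equation as a polynomial identity in the algebraically independent coordinates of $p(v_1)\in L_{v_1}$ and $p(v_2)\in L_{v_2}$. If exactly one of $\theta_1,\theta_2$ is trivial, the equation forces a generic point of one line onto a fixed hyperplane that is perpendicular to the other line and passes through a point whose coordinates are algebraic over $\mathbb Q(\mL)$; since no two lines of $\mL$ are perpendicular, this contradicts $\mL$-genericity, so $\theta_1=\Id$ if and only if $\theta_2=\Id$, and then $q=p$. If both are nontrivial, then $|\mL[C_1]|=|\mL[C_2]|=2$ and each $\theta_i$ is the half-turn about the common perpendicular of $\mL[C_i]$. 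Rewriting the edge equation as $\|p(v_1)-\psi p(v_2)\|=\|p(v_1)-p(v_2)\|$ with $\psi=\theta_1\theta_2$, the orthogonality of the direction of $L_{v_i}$ to the axis of $\theta_i$ makes the cross term in the expansion vanish, which is what makes coefficient-matching tractable; I expect $\mL$-genericity then to force $\psi$ to fix $L_{v_2}$ pointwise, and general position (absence of weakly concurrent triples and of perpendicular pairs) to exclude the residual rotation, leaving $\psi=\Id$ and hence $\theta_1=\theta_2$. This common half-turn preserves every line of $\mL[G]$, so Lemma~\ref{lem_isomgroup} forces $|\mL[G]|=2$ with $\theta_1\in\isom(\mL[G])$ and $q=\theta_1 p$. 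Extracting the identity of these two independently produced isometries from a single scalar constraint, cleanly and using only the general-position hypotheses, is where the real work lies.
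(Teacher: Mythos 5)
Your overall architecture is the paper's: induction via smoothing a degree-two vertex and Theorem~\ref{thm_main_henn}, with the two-triangles-plus-edge graph as base case, and in the base case two applications of Lemma~\ref{lem_tech} (one per triangle) followed by a case analysis on whether the resulting isometries $\theta_1,\theta_2$ are trivial. Your inductive step is correct, your device for choosing the apex at which to apply Lemma~\ref{lem_tech} (if neither apex worked we would get $L_{v_1}=L_{a_1}$ and $L_{v_1}=L_{b_1}$, contradicting that $a_1b_1$ is crossing) is sound --- indeed more careful than the paper's labelling convention --- and your treatment of the case where exactly one of $\theta_1,\theta_2$ is trivial (the generic point $p(v_1)$ would lie on a $\mathbb{Q}(\mL)$-defined hyperplane perpendicular to $L_{v_2}$) is precisely the paper's argument. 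However, there is a genuine gap, and it sits exactly where you say the real work lies: the case where both $\theta_1$ and $\theta_2$ are non-trivial is announced as an expectation, not proved, and this case is the heart of the lemma.

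Moreover, the mechanism you anticipate is not quite right as stated. Genericity does upgrade the single edge equation to the two-variable identity $\|x-\psi y\|=\|x-y\|$ for all $x\in L_{v_1}$, $y\in L_{v_2}$ (the coefficients of $\psi=\theta_1\theta_2$ lie in $\mathbb{Q}(\mL)$ by Lemma~\ref{lem_twoset}(1), and the parameters of $p(v_1),p(v_2)$ are algebraically independent over $\mathbb{Q}(\mL)$), but this identity alone does \emph{not} force $\psi$ to fix $L_{v_2}$ pointwise: for instance, a suitable translation in a direction perpendicular to both $L_{v_1}$ and $L_{v_2}$ satisfies it while moving every point of $L_{v_2}$. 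One must exploit the specific structure of $\psi$ as the product of the two half-turns whose axes meet $L_{v_1}$, respectively $L_{v_2}$, perpendicularly; expanding the identity with that structure, all of it collapses to the statement that the segment joining the fixed point of $\theta_1|_{L_{v_1}}$ to the fixed point of $\theta_2|_{L_{v_2}}$ is perpendicular to both lines (or degenerates to a single point), and only at that stage does the ``no weakly concurrent triples'' hypothesis force $\mL[C_1]=\mL[C_2]$, $\theta_1=\theta_2$ and hence $q=\theta_1 p$. The paper reaches this same geometric statement by a cleaner one-variable trick: it sets $f(x)=\|x-p(v_3)\|^2-\|\gamma\cdot x-q(v_3)\|^2$ on the line of the other attachment vertex, observes that $f$ is a polynomial with coefficients in $\mathbb{Q}(\mL)(p(v_3))$ vanishing at the generic point $p(v_4)$, hence vanishing identically, and then evaluates $f$ at the fixed point $z$ of $\gamma$ to obtain $\|z-p(v_3)\|=\|z-q(v_3)\|$, which yields the perpendicularity of the segment between the two fixed points; weak concurrency then finishes. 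So your target statement is correct and reachable along your route, but the decisive computation and the invocation of weak concurrency --- the actual content of this case --- are missing from your write-up.
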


\begin{proof}
Again we proceed by induction on $|\mV|$. 
Suppose that $|\mV| = 6$ (which is minimal for type 2).
Then $\mV = \{v_1,\dots,v_6\}$, $\mE = \{v_1v_2,v_2v_3,v_3v_1,v_4v_5,v_5v_6,v_6v_4,v_1v_4\}$
and we may choose the labelling so that 
$v_1v_2, v_1v_3, v_4v_5, v_5v_6$ are all crossing 
edges. 

Suppose that $(G,p)$, $(G,q)$ are $\mL$-frameworks 
such that $p$ is $\mL$-generic and  $M_{G}(q) = M_{G}(p)$.
By Lemma~\ref{lem_tech} there is some 
$\theta \in  \isom (\{L_{v_1},L_{v_2},L_{v_3}\})$ such that $q(v_3) = \theta p(v_3)$ 
and some 
$\gamma \in  \isom (\{L_{v_4},L_{v_5},L_{v_6}\})$ such that $q(v_4) = \gamma p(v_4)$.

Suppose that $\theta p(v_3) = p(v_3)$. Then $\theta = \Id$ since $p$ is $\mL$-generic. If in addition $\gamma p(v_4) = p(v_4)$ then $\gamma = \Id$ and it follows that $q=p$. 
On the other hand if $\gamma p(v_4) \neq p(v_4)$ then let $y = (\gamma p(v_4) +p(v_4))/2$. Since $y$ is a fixed point of $\gamma$ the coordinates of $y$ lie in $\mathbb Q(\mL)$. However $\|p(v_3) -p(v_4)\| = \|q(v_3) - q(v_4)\| = \|p(v_3) -\gamma p(v_4)\|$. So the line joining $p(v_3)$ and $y$ is perpendicular to $L_{v_4}$ which contradicts the fact that $p$ is $\mL$-generic. Hence we have shown that $ \theta p(v_3) = q(v_3) \neq p(v_3)$ and by symmetry $\gamma p(v_4) = q(v_4) \neq p(v_4)$.

Define
$f(x) = \|x-p(v_3)\|^2 - \|\gamma\cdot x - q(v_3)\|^2$ for $x\in L_{v_4}$. Observe that $\gamma\cdot x = 2z -x$ where $z$ is the unique fixed point of $\gamma$ in $L_{v_4}$ and $q(v_3) = 2w-p(v_3)$ where $w$ is the unique fixed point of $\theta$ in $L_{v_3}$. It follows that $f$ is a polynomial 
function on $L_{v_4}$ with coefficients in $ \mathbb Q(\mL)(p(v_3))$ (i.e. $\mathbb Q(\mL)$ extended by the coordinates of $p(v_3)$). Now $\|p(v_4)-p(v_3)\|=\|q(v_4)-q(v_3)\|$ implies $f(p(v_4)) = 0$. 
Since $p$ is $\mL$-generic,  
it follows that $f(x) = 0$ for all $x \in L_{v_4}$. In particular 
$f(z) = 0$. Therefore $\|z - p(v_3)\| = \|z-q(v_3)\|$. So it follows that either $L_{v_3} = L_{v_4}$ and $w = z$ or that the segment $[w,z]$ is perpendicular to $L_{v_3}$. In either case, since $\mL$ is in general position, it easily follows that $\theta = \gamma\in \isom(\mL[G])$ and so $q = \theta p$ and so $(G,p)$ is globally rigid.

The case when $|\mV|>6$ can be solved in the same manner as in the proof of Lemma~\ref{lem_type1_GR} by induction applying Theorem~\ref{thm_main_henn} at a vertex of degree two.
\end{proof}

\begin{lemma}
\label{lem_type3_GR}
Suppose that $G$ is a type $3$ partitioned graph. Then $G$ is generically globally $\mL$-rigid.
\end{lemma}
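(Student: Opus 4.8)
The plan is to prove Lemma~\ref{lem_type3_GR} by induction on $|\mV|$, exactly parallel to the structure of the proofs of Lemmas~\ref{lem_type1_GR} and~\ref{lem_type2_GR}. For the inductive step, whenever $|\mV|$ exceeds the minimal case, I expect there to be a degree-two vertex $w$ lying on one of the dotted paths and not incident to either of the two prescribed crossing edges $e,f$; smoothing at $w$ yields a smaller type 3 graph $G'$ (one must check that smoothing preserves all the defining features of type 3, namely that $e,f$ remain non-adjacent crossing edges and that every $v_1$--$v_2$ path in the cycle still contains one of them), so induction plus Theorem~\ref{thm_main_henn} closes the step. The real work is therefore entirely in the base case.

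\textbf{Base case analysis.} The minimal type 3 graph consists of a cycle $C$ through $v_1,v_2$ together with a path $P$ joining $v_1$ to $v_2$, where the two arcs of $C$ from $v_1$ to $v_2$ each carry one of the two disjoint crossing edges $e,f$. In the minimal configuration the dotted arcs collapse as far as possible, so I would reduce (via smoothing, as above) to the situation where $C$ is as short as permitted and $P$ is a single edge $v_1v_2$, giving a small explicit graph on a handful of vertices. The plan is then to take $p$ $\mL$-generic and $q$ with $M_G(q)=M_G(p)$, and to apply Lemma~\ref{lem_tech} at suitable degree-two vertices to extract local isometries. Concretely, the edge $e$ sits in a triangle-like or path configuration whose endpoints are rigidified by the rest of $G$, so Lemma~\ref{lem_tech} (or Lemma~\ref{lem_triangle_flex} when $e$ is in a triangle) produces an isometry $\theta$ of the lines near $e$ with $q(x)=\theta p(x)$ on those vertices; symmetrically $f$ produces $\gamma$ near $f$.

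\textbf{Gluing the two isometries.} The crux is to show $\theta=\gamma$ and that this common isometry lies in $\isom(\mL[G])$. Because $C$ is a cycle, the two arcs connecting the $e$-region to the $f$-region give two independent relations, and the path $P$ gives a third; these over-determine the relative position, forcing the two local isometries to agree. I would argue as in Lemma~\ref{lem_type2_GR}: if $\theta=\Id$ then genericity propagates to force $\gamma=\Id$ and hence $q=p$; if $\theta\neq\Id$, then using $\|p(a)-p(b)\|=\|q(a)-q(b)\|$ across a connecting path together with the fact that fixed points of the local isometries have coordinates in $\mathbb Q(\mL)$ (Lemma~\ref{lem_twoset}), genericity of $p$ forces the perpendicularity or weak-concurrency conditions that are excluded by general position, yielding $\theta=\gamma\in\isom(\mL[G])$.

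\textbf{Main obstacle.} The hard part will be the bookkeeping in the base case: unlike types 1 and 2, a type 3 graph has \emph{two} internally disjoint $v_1$--$v_2$ routes through $C$ plus the chord path $P$, so there are genuinely two algebraic constraints linking the $e$-isometry to the $f$-isometry, and I must verify that together they pin down the relative configuration rather than leaving a one-parameter family. Establishing that these constraints are incompatible with a nontrivial discrepancy between $\theta$ and $\gamma$—and doing so uniformly over the minimal configurations, which may require handling a couple of sub-cases according to whether $e$ and $f$ sit in triangles or longer paths—is where the care is needed; I expect to reuse the perpendicularity/weak-concurrency contradiction of Lemma~\ref{lem_tech} and Lemma~\ref{lem_type2_GR} as the decisive step.
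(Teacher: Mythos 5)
Your overall plan coincides with the paper's: induction on $|\mV|$ via smoothing and Theorem~\ref{thm_main_henn}, with a small base case analysed through Lemma~\ref{lem_tech} and Lemma~\ref{lem_triangle_flex} and an isometry-gluing argument, and your base-case sketch is broadly consistent with the paper's three-case treatment of the $4$-vertex graph. The genuine gap is in the inductive step, which you dismiss with ``the real work is therefore entirely in the base case.'' For type $3$ graphs, unlike types $1$ and $2$, both witnessing crossing edges lie on the \emph{same} cycle $C$, and a degree-two vertex avoiding both of them need not exist, nor be usable when it does. Concrete counterexample: vertices $u,c,d,w,z$, edges $uc,cd,dw,wu,uz,zw$, partition $V_1=\{u,c,z\}$, $V_2=\{d,w\}$. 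This is a type $3$ graph on five vertices; its crossing edges are $uw$, $cd$, $zw$, and the only admissible choices of $(C,P,e,f)$ are $C=ucdw$, $P=uzw$ with $(e,f)=(uw,cd)$, and $C=ucdwz$, $P=uw$ with $(e,f)=(cd,zw)$. In the second choice every degree-two vertex ($c$, $d$, $z$) is incident to $e$ or $f$; in the first choice the only uncovered vertex is $z$, but smoothing at $z$ merges $uz$ and $zw$ into a second copy of the edge $uw$, so the result is not a partitioned graph of type $3$ (discarding the repeated edge leaves a $4$-cycle, which is not even redundantly $\mL$-rigid, hence not globally rigid). So your reduction cannot be performed from this graph under any choice of the data, and the same difficulty occurs for certain six-vertex examples.

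This is precisely the obstruction the second half of the paper's inductive step is engineered to overcome. Writing $G$ as three internally disjoint $u$--$w$ paths $P_1,P_2,P_3$ with crossing edges $e_1\in P_1$ not incident to $w$ and $e_2\in P_2$ not incident to $u$, the paper smooths at an internal vertex of $P_3$ when one exists; otherwise $P_3$ is the single edge $uw$, and it smooths at a vertex that \emph{is} incident to a crossing edge, chosen so that the contracted edge remains crossing: take $y$ to be the vertex of $P_1$ nearest to $u$ among those not lying in the same part as $u$, and smooth at a neighbour of $y$ other than $u$ and $w$. Since all vertices of $P_1$ strictly between $u$ and $y$ lie in $u$'s part, the edge created by the smoothing is again crossing and the type $3$ structure survives. (In the example above this smooths at $c$, creating the crossing edge $ud$ and reducing to the $4$-vertex base case.) Without this idea --- or, alternatively, without identifying the finitely many ``stuck'' five- and six-vertex configurations and proving their global rigidity directly as additional base cases --- your induction does not close, so the proposal as written is incomplete.
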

\begin{proof}
Again we proceed by induction on $|\mV|$. First suppose that $|\mV| = 4$. Then $\mV = \{u,v,w,x\}$, $\mE = \{uv,vw,wx,xu,uw\}$. 
By the property of type 3, each of the two paths $uvw$ and $uxw$ contains a crossing edge.
Suppose that $p,q \in \mL^\mV$ with $p$ $\mL$-generic and $M_G(q) = M_G(p).$ We split the proof into three cases.

\medskip

Case 1: $uw$ is not a crossing edge.

In this case $uv,vw,wx,xu$ are all crossing. By Lemma~\ref{lem_tech}, there is some $\theta \in \isom(\{L_u,L_v\})$ such that $q(z) = \theta p(z)$ for $z \in \{u,v,w\}$, 
and there is some $\gamma \in \isom (\{L_u,L_x\})$ such that 
$q(z) = \gamma p(z)$ for $z \in \{u,x,w\}$. 
Since $\gamma p(u) = q(u)=\theta p(u)$ we have $\gamma = \theta$.
Since $\mL$ is in general position, either $\theta$ is the identity or 
$\isom(\{L_u,L_v\})=\isom(\{L_u,L_v,L_x\})=\isom(\mL[G])$ holds by Lemma~\ref{lem_isomgroup}.
In either case, $q=\theta p$ for some $\theta\in \isom(\mL[G])$.

\medskip

Case 2: $uw$ is crossing and $|\mL[G]| = 2$. 

Relabelling if necessary, we can assume that $uv,uw,wx$ are crossing edges and that $vw,xu$ are not crossing edges. 
By Lemma~\ref{lem_triangle_flex} 
$q(u) = \theta p(u)$ for some $\theta \in \isom\{L_u,L_w\}$ 
and $q(w) = \gamma p(w)$ for some $\gamma \in \isom\{L_u,L_w\}$. 
If $\theta$ is the identity, then 
we have 
$\|p(u) -p(w)\| = \|q(u) - q(w)\|=\|p(u)-\gamma p(w)\|$,
which implies $\gamma=\Id=\theta$ due to the $\mL$-genericity of $p$. 
Hence, since $|\isom\{L_u,L_w\}|=2$ by Lemma~\ref{lem_isomgroup}, we can always deduce $\gamma=\theta$.
Moreover $\{L_u,L_w\} = \mL[G]$, so $\theta \in \isom(\mL[G])$. Now, replacing $q$ by $\theta^{-1}q$, we see that $q(u) = p(u)$ and $q(w) = p(w)$. Since $p$ is $\mL$-generic, we know that $L_v$ is not perpendicular to $[p(u),p(w)]$ and so $q(v) = p(v)$. Similarly $q(x) = p(x)$ and so $q=p$.

\medskip

Case 3: $uw$ is crossing and $|\mL[G]| \geq 3$.

Relabelling if necessary we can assume that $uv, vw,uw$ are all crossing edges. By Lemma~\ref{lem_tech} there is some $\theta \in \isom(\{L_u,L_v,L_w\})$ such that 
$q(z) = \theta p(z)$ for $z \in \{ u,v,w\}$. But $L_u,L_v,L_w$ are pairwise distinct and so by Lemma~\ref{lem_isomgroup}, $\theta =\Id$ and so $q(z) = p(z)$ for $z \in \{ u,v,w\}$. Now since $p$ is $\mL$-generic, $[p(u),p(w)]$ is not perpendicular to $L_x$ and it follows that $q(x) = p(x)$.
This completes the proof for the case when $\|\mV\|=4$.

\medskip

Now suppose that $|\mV| \geq 5$. 
Let $u,w$ be the vertices of degree $3$ and let $P_1,P_2,P_3$ be the three internally vertex disjoint paths joining $u,v$. 
Since $G$ is type 3, 
we may assume that
$P_1$ contains a crossing edge $e_1$ not incident to $w$
and $P_2$ contains a crossing edge $e_2$ not incident to $u$.
If $P_3$ contains  an internal vertex $x$, 
then smoothing at $x$ results in a graph of type 3.
Hence, by applying the induction hypothesis and then Theorem~\ref{thm_main_henn}, the generic global $\mL$-rigidity of $G$ follows.

Therefore, we may assume $|V(P_3)|=2$. Since $|\mV|\geq 5$, we may further suppose $|V(P_1)|\geq 3$.
Let $y$ be a vertex of $P_1$, which is closest from $u$ among those which are not belong to the same vertex component $V_i$ as that of $u$ in the vertex partition. 
Since $|V(P_1)|\geq 3$, $y$ has a neighbor $z\notin \{u,w\}$.
Observe that  smoothing at $z$ results in a graph of type 3.
Hence, by applying the induction hypothesis and then Theorem~\ref{thm_main_henn}, the generic global $\mL$-rigidity of $G$ follows.
\end{proof}

\begin{remark}
For a given $\mL$, we may define the $\mL$-rigidity matroid of $(G,p)$ to be the matroid, with ground set $\mE$, in which a set $E\subset \mE$ is independent if the rows of $R'(G,p,\mL)$ associated with $E$ are linearly independent. 
It is clear that the rank function of this matroid is determined by which square submatrices of $R'(G,p,\mL)$ have vanishing determinant. 
Thus we may define the {\em generic line-constrained rigidity matroid} to be the $\mL$-rigidity matroid for any set  $\mL$ of lines in general position. 
We note that type 1, 2 or 3 graphs are all circuits in the generic line-constrained matroid. However, there are circuits that are not type 1, 2, or 3. 

We also note that there are circuits that are not globally rigid, in contrast to the classical one dimensional rigidity matroid\footnote{The (generic) $d$-dimensional rigidity matroid may be defined similarly to the (generic) $\mL$-rigidity matroid. The difference is that the matroid is the row matroid of the matrix of coefficients of the linear system in Equation \ref{eqn_inf_bar_constraint} rather than $R'(G,p,\mL)$.}  in which all circuits are also globally rigid. For example, the (generic) framework on the left in Figure \ref{Example 2 (P-connected)} represents a circuit in the line constrained rigidity matroid which is not generically globally rigid. 
\end{remark}

\section{Characterising Global \texorpdfstring{$\mL$}{L}-rigidity}
\label{sec:main}

In this section we give the main theorem of this paper, a combinatorial characterisation of generic global $\mL$-rigidity.
We begin with the case when $G$ is 2-connected.

\begin{lemma}
\label{lem_2conncase}
Suppose that $G$ is P-connected, $2$-connected and generically redundantly $\mL$-rigid.  Then $G$ contains a subgraph of type $3$.
\end{lemma}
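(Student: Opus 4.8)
The plan is to build a type-$3$ subgraph out of two vertex-disjoint crossing edges and a cycle through them, and then to insert a chord-path that separates the two crossing edges. Throughout I would use two easy facts. First, since $G$ is $2$-connected it is $2$-edge-connected, so $G-e$ is connected for every edge $e$; hence P-connectedness simplifies to the statement that $G-v$ is crossing for every $v\in\mV$, and by Theorem~\ref{thm_rigidity_matroid} (and the remark following it) redundant $\mL$-rigidity says exactly that $G-e$ contains a crossing cycle for every $e\in\mE$. Second (call it Observation A): any path whose two endpoints lie on distinct lines must contain a crossing edge, since the line-label has to change somewhere along the path. In particular, every cycle through a crossing edge $a_1a_2$ is itself crossing, because its complementary arc joins $a_1$ to $a_2$, which lie on distinct lines.

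First I would show that $G$ contains two vertex-disjoint crossing edges. There are at least two crossing edges, for otherwise deleting the unique crossing edge leaves a connected graph with no crossing cycle, contradicting redundant rigidity. If no two crossing edges were vertex-disjoint, they would pairwise meet and hence form either a star or a triangle. A star centred at $v$ is impossible: by P-connectedness $G-v$ is connected and crossing, so it contains a crossing edge, which is not incident to $v$. A triangle on $\{a_1,a_2,a_3\}$ is also impossible: then the lines $L_{a_1},L_{a_2},L_{a_3}$ are distinct, every other edge is monochromatic, and connectedness forces all vertices onto these three lines; but then the only crossing edges of $G-a_1a_2$ are $a_2a_3$ and $a_1a_3$, which share $a_3$, and a cycle using them cannot return through monochromatic edges, so $G-a_1a_2$ has no crossing cycle, again contradicting redundant rigidity. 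Thus vertex-disjoint crossing edges $e=a_1a_2$ and $f=b_1b_2$ exist.

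Next, since $G$ is $2$-connected, $e$ and $f$ lie on a common cycle $C$, and $e,f$ cut $C$ into two arcs $A$ (joining $a_2$ to $b_1$) and $B$ (joining $b_2$ to $a_1$). My target is a path $P$ in $G$, internally disjoint from $C$, with one endpoint in $V(A)$ and the other in $V(B)$: then $C\cup P$ is a graph of type $3$, the two arcs of $C$ determined by the endpoints of $P$ each containing one of the vertex-disjoint crossing edges $e,f$. More generally I would analyse the theta graph $C\cup P'$ produced by any ear $P'$ of $C$ (one exists because $G$ is redundantly rigid, so $G\neq C$): it has three internally disjoint branches, and Observation A lets me locate crossing edges inside the relevant sub-arcs, so that a suitable choice of two branches as ``cycle arcs'' and the third as the ``chord path'' exhibits a type-$3$ subgraph whenever the two witnessing crossing edges can be chosen vertex-disjoint.

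The main obstacle is precisely this last point: ensuring the inserted path separates $e$ from $f$, equivalently that the witnessing crossing edges do not all concentrate at a single branch vertex. The genuinely hard case is when no ear joins $V(A)$ to $V(B)$, i.e.\ when $\{e,f\}$ is a $2$-edge-cut of $G$; then $C$ has no separating chord-path, and every theta built from $C$ keeps both crossing edges in one arc. I expect to resolve this by induction on $|\mE|$: a side $S$ of the cut, together with a virtual edge joining its two attachment vertices, should again satisfy the hypotheses, so a type-$3$ subgraph of the reduced graph either avoids the virtual edge (and is then a type-$3$ subgraph of $G$) or uses it (in which case the opposite arc of $C$ supplies a replacement path). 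Verifying this inheritance cleanly—in particular that the virtual edge is crossing and that redundant rigidity of $G$, via Observation A, passes to the reduced graph—together with the degenerate sub-cases where all crossing edges of $C$ meet one branch vertex, is the part I anticipate will demand the most care.
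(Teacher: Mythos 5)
Your opening step is correct and is essentially the paper's own Claim~\ref{clm_zero}: your star/triangle dichotomy for pairwise-intersecting crossing edges, and the argument that a crossing cycle needs two crossing edges while a monochromatic path cannot join endpoints on distinct lines, match the paper's reasoning almost verbatim. Your ``easy case'' is also sound: when some ear of $C$ joins the two arcs (equivalently, when $G-\{e,f\}$ is connected), the resulting theta graph is of type $3$, exactly as in the paper. The problem is that the case you yourself isolate as ``genuinely hard'' --- when $\{e,f\}$ is a $2$-edge-cut --- is the actual crux of the lemma, and your proposal does not prove it: the virtual-edge induction is only sketched, and the inheritance claim it rests on is false as stated.

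Concretely, let one side of the cut be a four-cycle $a_2xb_1y$ with $L_{a_2}\neq L_x=L_{b_1}=L_y$, let the other side be a four-cycle $a_1ub_2v$ whose vertices lie on four further lines, and join them by $e=a_1a_2$ and $f=b_1b_2$. This $G$ is $2$-connected, P-connected and redundantly $\mL$-rigid, and $\{e,f\}$ is a $2$-edge-cut; but the reduced graph $G_1+a_2b_1$ is \emph{not} P-connected (deleting $a_2$ leaves every remaining vertex on one line) and in fact contains no type-$3$ subgraph at all, since its only crossing edges are $a_2x$, $ya_2$ and $a_2b_1$, which pairwise share the vertex $a_2$, whereas type $3$ requires two non-adjacent crossing edges. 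Here any type-$3$ subgraph of $G$ must use the cut edge $f$ itself as one of its two witness crossing edges and must pass through the other side, so recursing into a side of the cut cannot work in general; deciding which side (if any) inherits the hypotheses is precisely the difficulty, and nothing in your sketch addresses it. The paper needs no induction here because it applies redundant rigidity a \emph{second} time: since $f$ is a bridge of $G-e$ and $G-e$ is $\mL$-rigid, $G-e$ contains a crossing cycle $D'$ that avoids $f$ and hence lies entirely inside one side of the cut; $D'$ has at least two crossing edges, and (exchanging the roles of $e$ and $f$ if necessary) one of them, $f'$, is vertex-disjoint from $e$; then $C\cup D'$ contains a cycle $C'$ through $e$ avoiding $f'$, and two vertex-disjoint paths from the endpoints of $e$ to $D'$ (or the chord/pendant-path analysis when $e$ already meets $D'$) assemble the type-$3$ subgraph. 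That use of redundant rigidity --- a crossing cycle trapped on one side because it cannot cross the bridge --- is the idea missing from your proposal.
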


\begin{proof} 
Suppose, seeking a contradiction, that $G$ satisfies the hypotheses of the lemma but 
does not contain a type $3$ subgraph.

\begin{claim}
\label{clm_zero}
There are crossing edges $e,f \in E(G)$ that are vertex disjoint. 
\end{claim}

\begin{proof}
Suppose not. Since $G$ is rigid, it has a crossing edge $e$.  By redundant rigidity $ G-e$ is rigid, so there is some crossing edge $f \neq e$. By assumption $e$ and $f$ are not disjoint, so $e = uv$ and $f = vw$ for some vertices $u,v,w$. Now, by $P$-connectivity, $G-v$ has a crossing edge $g$. If $g \neq uw$ then two of $e,f,g$ are vertex disjoint contradicting our assumption. So $g=uw$ is the only crossing edge of $G-v$. Furthermore, if there is some crossing edge $h \neq e,f$ that is incident to $v$ then $h,g$ are disjoint crossing edges, contradicting our assumption.   It follows that $e$ and $f$ are the only crossing edges in $G-uw$ and so $G-uw$ has no crossing cycle, contradicting the assumption that $G-uw$ is rigid.
\end{proof}

\begin{claim}
\label{clm_one}
There are crossing edges $e,f$ and cycles $C,D$ so that $e,f$ are vertex disjoint and $e \in E(C) \setminus E(D)$, $f \in E(D) \setminus E(C)$.
\end{claim}

\begin{proof}
By Claim~\ref{clm_zero} we can choose vertex disjoint crossing edges $e,f\in \mE$. 
Since $G$ is 2-connected there are (not necessarily distinct) cycles $C,D$ such that $e \in E(C)$ and $f \in E(D)$.
Suppose that one of 
$C,D$, without loss of generality $D$, contains both of $e,f$. If $G-\{e,f\}$ is
connected then it follows that $G$ has a subgraph of type 3, contradicting our assumption.
So we can assume that $G -\{e,f\}$ is not connected. 

Now $G-e$ is rigid and so contains a crossing cycle $D'$. Since $f$ is a bridge of $G-e$, $D'$ is contained in $G-\{e,f\}$. Now $D'$ contains at least two crossing edges, $f',f''$ and without loss of generality we can assume that $f'$ and $e$ are vertex disjoint. 
Then $C\cup D'$ contains a cycle $C'$ which contains $e$ and avoids $f'$, and $e,f', C',D'$ are the required edges and cycles.
\end{proof}

Now we can complete the proof of Lemma \ref{lem_2conncase}.
Let $e,f$ be edges and $C, D$ be cycles as in the statement of Claim \ref{clm_one}. Suppose that $e$ is vertex disjoint from $D$. Since $G$ is 2-connected there are two vertex disjoint paths $P_1,P_2$ from the endpoints of $e$ to the vertex set of $D$. Since $D$ contains the crossing edge $f$, the graph  $P_1 \cup P_2 \cup e \cup D$ is a type 3 graph, contradicting our assumption. 

So we may suppose that $e$ is incident to $D$. If $e$ is a chord of $D$ then since $f$ and $e$ are disjoint, the graph $D \cup e$ is type 3. Otherwise suppose $e = uv$ with $v \in V(D)$ and $u \not\in V(D)$. Then, since $G$ is $2$-connected, we can find a path $P$ from $u$ to $D$ that does not contain $v$. Now $P\cup e \cup D$ is a type 3 graph. Thus in all cases we arrive at a contradiction. 

    %\includegraphics[width=0.6\textwidth]{Lemma_Case_2.pdf}%{2connproof.png}
    %\caption{An illustration of Case 2 in the proof of Lemma~\ref{lem_2conncase}. The blue edges are crossing edges.}
    %\label{fig:2connproof}
\end{proof}

Now we recall a basic and well-known property of 2-connected graphs. 
We include a proof for completeness. 

\begin{lemma}
\label{lem_ear}
Suppose that $D$ is a $2$-connected graph and that $K$ is a subgraph of $D$ with at least two vertices. There is a sequence of subgraphs $K_1, \dots, K_s$ of $D$ such that $K_1 = K$, $K_s = D$ and for $i= 1,\dots, s-1$,  $K_{i+1} = K_i \cup P_i$ where $P_i$ is a simple path internally vertex disjoint from $K_i$ whose endvertices are distinct and both lie in $K_i$.
\end{lemma}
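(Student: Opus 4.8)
The plan is to build the sequence greedily: start with $K_1 = K$ and, as long as the current subgraph is a proper subgraph of $D$, attach one more ear to obtain the next term. Since $D$ is finite and each ear $P_i$ is a path of length at least one (its endvertices are required to be distinct), attaching $P_i$ strictly increases the number of edges; as every term remains a subgraph of $D$, the process must terminate. Let $K_m$ denote the terminal subgraph, i.e.\ one to which no further ear of $D$ can be attached. It then suffices to prove $K_m = D$, since the resulting sequence $K_1,\dots,K_m$ has exactly the required form.

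First I would show $V(K_m) = V(D)$. Suppose not, and let $B$ be a connected component of the subgraph of $D$ induced on $V(D)\setminus V(K_m)$. Let $N \subseteq V(K_m)$ be the set of vertices of $K_m$ having a neighbour in $B$; because $B$ is a component of the induced subgraph, every edge of $D$ leaving $B$ ends in $N$. If $N = \emptyset$ then $B$ is a union of components of $D$ disjoint from the nonempty set $V(K_m)$, contradicting the connectivity of $D$; and if $N = \{a\}$ is a single vertex then, since $|V(K_m)| \geq |V(K)| \geq 2$, the vertex $a$ separates $B$ from the nonempty set $V(K_m)\setminus\{a\}$, so $a$ is a cut vertex, contradicting $2$-connectivity. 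Hence $|N| \geq 2$; choose distinct $a_1,a_2 \in N$ with neighbours $b_1,b_2 \in B$. Taking a simple $b_1$--$b_2$ path inside the connected graph $B$ and prepending $a_1$, appending $a_2$, yields a simple path $P$ from $a_1$ to $a_2$ whose internal vertices all lie in $B$ and hence avoid $V(K_m)$, with distinct endvertices in $K_m$. Thus $P$ is an ear attachable to $K_m$, contradicting maximality. Therefore $V(K_m) = V(D)$.

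Finally, if $K_m \neq D$ then, as $V(K_m) = V(D)$, there is an edge $xy \in E(D)\setminus E(K_m)$ with $x,y \in V(K_m)$; since $D$ has no loops, $x \neq y$, so the single edge $xy$ is itself an ear (a path with no internal vertices) attachable to $K_m$, again contradicting maximality. Hence $K_m = D$, completing the argument. The one nontrivial point is the spanning step, where $2$-connectivity is used in the guise of the absence of cut vertices; this is precisely where the hypothesis $|V(K)| \geq 2$ is needed, to guarantee that $V(K_m)\setminus\{a\}$ is nonempty. Equivalently, one could extract the ear directly from the fan version of Menger's theorem applied to a vertex outside $K_m$ and the set $V(K_m)$.
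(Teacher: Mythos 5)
Your proof is correct and follows essentially the same route as the paper: a greedy augmentation by ears, split into the case where the current subgraph spans $V(D)$ (attach a missing edge as a trivial ear) and the case where it does not (use $2$-connectivity to produce an ear through new vertices). The only difference is local: the paper routes a path from an outside neighbour $v$ of $K_j$ back to $V(K_j)$ inside the connected graph $D-u$, whereas you show a component of $D - V(K_m)$ must have at least two attachment vertices because $D$ has no cut vertex; both invocations of $2$-connectivity (and of the hypothesis $|V(K)| \geq 2$) play the identical role.
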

\begin{proof}
Suppose that we have constructed $K_1,\dots,K_j$ as required and that $K_j \neq D$. 
If $V(K_j) = V(D)$ then choose some edge $uv \in E(D) \setminus E(K_j)$, let $P_j$ be the subgraph of $D$ induced by $\{u,v\}$. 
On the other hand if $V(K_j) \neq V(D)$ then since $D$ is connected we can choose an edge $uv \in E(D)$ such that $u\in V(K_j)$ and $v \not\in V(K_j)$. 
Since $|V(K_j)| \geq |V(K)| \geq 2$ and since $D$ is 2-connected there is a path from $v$ to $V(K_j)$ that misses $u$. 
Let $R$ be a path of minimal length from $v$ to $V(K_j)$ such that $u \not\in V(R)$ and let $P_j = R+u +uv$. 
Now let $K_{j+1} = K_j \cup P_j$ and we can continue in this way until we have constructed the required sequence.
\end{proof}

In the case where $K$ is the graph consisting of a single edge, the sequence $K_1,\dots, K_s$ is often referred to as an open ear decomposition of $D$.
Hence, we call a path $P_i$ given in Lemma~\ref{lem_ear} {\em an open ear} of $K_i$.

\begin{lemma}
\label{lem_2connGR}
Suppose that $G$ is a crossing partitioned graph
and that $\mL$ is a  set of lines in general position. 
If $G$ is P-connected, redundantly $\mL$-rigid and $2$-connected then $G$ is generically globally $\mL$-rigid.
\end{lemma}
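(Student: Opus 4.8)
The plan is to combine the structural result of Lemma~\ref{lem_2conncase} with an ear-decomposition induction, using the subdivision result Theorem~\ref{thm_main_henn} as the engine that transports generic global $\mL$-rigidity along each ear. First I would invoke Lemma~\ref{lem_2conncase}: since $G$ is P-connected, $2$-connected and redundantly $\mL$-rigid, it contains a subgraph $K$ of type $3$, which by Lemma~\ref{lem_type3_GR} is generically globally $\mL$-rigid. As $K$ contains crossing edges, it is a crossing graph with at least two vertices, so it serves both as the base case of the induction and as a valid starting subgraph for an ear decomposition.

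Next I would apply Lemma~\ref{lem_ear} to $D=G$ and the subgraph $K$ to obtain a sequence $K=K_1,K_2,\dots,K_s=G$ in which $K_{i+1}=K_i\cup P_i$ for an open ear $P_i$. I would then show by induction on $i$ that every $K_i$ is generically globally $\mL$-rigid. Since $K\subseteq K_i$ for all $i$, each $K_i$ is crossing, so the inductive step reduces to the following path-attachment claim: if $H$ is a crossing, generically globally $\mL$-rigid graph and $P$ is a simple path internally vertex-disjoint from $H$ whose two distinct endpoints $u,v$ lie in $V(H)$, then $H\cup P$ is generically globally $\mL$-rigid. Given this claim, $K_s=G$ is generically globally $\mL$-rigid and the lemma follows.

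To prove the path-attachment claim I would induct on the length of $P$. If $P$ is a single edge $uv$, then $H\cup P=H+uv$ has the same vertex set as $H$, hence the same line set and the same group $\isom(\mL[H])$; since any $q$ with $M_{H+uv}(q)=M_{H+uv}(p)$ satisfies $M_H(q)=M_H(p)$, global rigidity is inherited directly from $H$. For a longer path, I would pick an internal vertex $w$ of $P$ with neighbours $y,z$ on $P$ and set $P'=P-w+yz$; by the inductive hypothesis $H\cup P'$ is generically globally $\mL$-rigid, and $H\cup P$ is obtained from $H\cup P'$ by subdividing the edge $yz$ at $w$, so Theorem~\ref{thm_main_henn} gives that $H\cup P$ is generically globally $\mL$-rigid. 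When $u$ and $v$ happen to be already adjacent in $H$ one must be slightly careful, since the edge $uv$ must survive in $H\cup P$: here I would first subdivide a freshly added copy of $uv$ (so that $\isom(\mL[\cdot])$ is unchanged throughout), and then recover the original edge $uv$ by the trivial edge-addition step, so that no multigraph ever appears in the final graph.

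The main obstacle is the path-attachment claim, and within it the bookkeeping required to apply Theorem~\ref{thm_main_henn} repeatedly while keeping track of the isometry group $\isom(\mL[\cdot])$ and while handling the degenerate case in which the ear's endpoints are already joined by an edge of $H$. Once this claim is in place, the remainder is a routine combination of the ear decomposition of Lemma~\ref{lem_ear} with the already-established generic global $\mL$-rigidity of the type~$3$ base graph.
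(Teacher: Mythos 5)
Your proposal is correct and follows essentially the same route as the paper: extract a type~3 subgraph via Lemma~\ref{lem_2conncase}, establish its generic global $\mL$-rigidity via Lemma~\ref{lem_type3_GR}, and then grow it to $G$ through the ear decomposition of Lemma~\ref{lem_ear}, using edge addition and Theorem~\ref{thm_main_henn} (subdivision) to carry generic global $\mL$-rigidity along each ear. Your path-attachment claim, including the careful treatment of an ear whose endpoints are already adjacent (subdivide the existing edge, then restore it by edge addition), simply spells out the step the paper compresses into the single sentence that open-ear addition preserves generic global $\mL$-rigidity.
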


\begin{proof}
Let $H$ be the subgraph whose existence is asserted by Lemma~\ref{lem_2conncase}. 
By Lemmas~\ref{lem_type1_GR},~\ref{lem_type2_GR} and~\ref{lem_type3_GR} $H$ is generically globally $\mL$-rigid. Now, by Lemma~\ref{lem_ear} there is a sequence of graphs 
$H = G_1, G_2, \dots, G_s = G$ and for $i = 1,\dots, s-1$ 
such that $G_{i+1}=G_i\cup P_i$, where $P_i$ is an open ear of $G_i$.
Since edge-addition and subdivision preserve generic global $\mL$-rigidity by Theorem~\ref{thm_main_henn}, the addition of an open ear preserves  generic global $\mL$-rigidity. Hence $G$ is generically globally $\mL$-rigid.
\end{proof}

To deal with partitioned graphs that are not $2$-connected we first review some basic facts about block decompositions of graphs. A {\em block} of a graph $G$ is a subgraph that is either a maximal 2-connected subgraph of $G$, or is a copy of $K_2$ whose edge is a bridge of $G$. Let $B(G)$ be the set of blocks of $G$ and $C(G)$ be the set of cutvertices of $G$. The {\em block-cutvertex forest} of $G$ is the bipartite forest whose vertex set is $B(G) \cup C(G)$ and whose edge set is $\{Dv: D \in B(G), v \in C(G)\cap V(D)\} $. Clearly the block-cutvertex forest is a tree if and only if $G$ is connected. A {\em leaf block} is a block that is a 
leaf of the block-cutvertex forest.

\begin{figure}
    \centering
    \begin{tabular}[t]{cc}
\begin{tikzpicture}
    \node[vertex] (a) at (0,0){2};
    \node[vertex] (b) at (-2,0){\phantom{1}};
    \node[vertex] (c) at (-1,-1.5){\phantom{1}};
    \node[vertex] (d) at (-1,1.5){\phantom{1}};
    \node[vertex] (e) at (1,1.5){3};
    \node[vertex] (f) at (2,0){4};
    \node[vertex] (g) at (0,-2){1};
    \node[vertex] (h) at (-1,-3.5){\phantom{1}};
    \node[vertex] (i) at (1,-3.5){\phantom{1}};
    \node[vertex] (j) at (2.5,1.4){\phantom{1}};
    \node[vertex] (k) at (2.2,-1.5){5};
    \node[vertex] (l) at (2,-3){\phantom{1}};
    \node (b1) at (0,-3) {$D_1$};
    \node (b2) at (0.3,-1) {$D_2$};
    \node (b3) at (-1,-.5) {$D_3$};
    \node (b4) at (.6,.7) {$D_4$};
    \node (b5) at (1.9,1.7) {$D_5$};
    \node (b6) at (2.5,-.7) {$D_6$};
    \node (b7) at (2.5,-2.3) {$D_7$};
    \draw[edge] (a) edge (b);
    \draw[edge] (a) edge (c);
    \draw[edge] (b) edge (c);
    \draw[edge] (a) edge (d);
    \draw[edge] (d) edge (e);
    \draw[edge] (e) edge (f);
    \draw[edge] (a) edge (f);
    \draw[edge] (a) edge (g);
    \draw[edge] (h) edge (g);
    \draw[edge] (h) edge (i);
    \draw[edge] (g) edge (i);
    \draw[edge] (e) edge (j);
    \draw[edge] (f) edge (k);
    \draw[edge] (k) edge (l);
\end{tikzpicture}
         &  
\begin{tikzpicture}
\clip (-4.5,-3) rectangle (3.5,3);
    \node[vertex] (1) at (-2,1) {\Large 1};
    \node[vertex] (2) at (-1,1) {\Large 2};
    \node[vertex] (3) at (-0,1) {\Large 3};
    \node[vertex] (4) at (1,1) {\Large 4};
    \node[vertex] (5) at (2,1) {\Large 5};
    \node[vertex] (d1) at (-3,-1) {\footnotesize$D_1$};
    \node[vertex] (d2) at (-2,-1) {\footnotesize$D_2$};
    \node[vertex] (d3) at (-1,-1) {\footnotesize$D_3$};
    \node[vertex] (d4) at (0,-1) {\footnotesize$D_4$};
    \node[vertex] (d5) at (1,-1) {\footnotesize$D_5$};
    \node[vertex] (d6) at (2,-1) {\footnotesize$D_6$};
    \node[vertex] (d7) at (3,-1) {\footnotesize$D_7$};
    \draw[edge] (1) -- (d1);
    \draw[edge] (1) -- (d2);
    \draw[edge] (2) -- (d2);
    \draw[edge] (2) -- (d3);
    \draw[edge] (2) -- (d4);
    \draw[edge] (3) -- (d4);
    \draw[edge] (3) -- (d5);
    \draw[edge] (4) -- (d4);
    \draw[edge] (4) -- (d6);
    \draw[edge] (5) -- (d6);
    \draw[edge] (5) -- (d7);
\end{tikzpicture}

    \end{tabular}
    \caption{The graph on the left has cutvertices labelled $1,\cdots,5$ and 
    blocks labelled $D_1,\cdots,D_7$. The corresponding block-cutvertex forest is shown on the right. The unique path connecting the blocks $1$ and $7$ is $D_1,1, D_2, 2, D_4, 4, D_6, 5, D_7$. So $[D_1,D_7] =
    D_1 \cup D_2 \cup D_4 \cup D_6 \cup D_7$.} 
    \label{Block}
\end{figure}
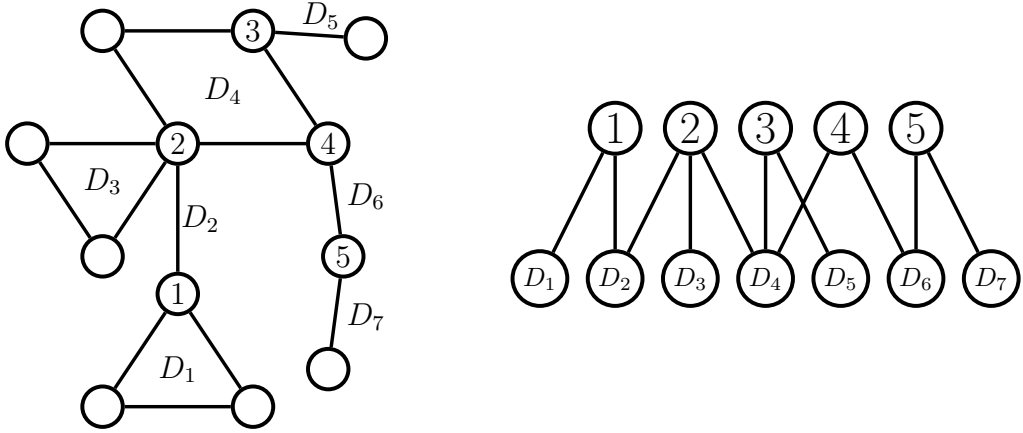

Now suppose that $L$ and $K$ are distinct leaf blocks of a connected graph. Then there is a unique simple path $L= D_1,v_1,D_2,\dots,D_{k-1},v_{k-1}, D_k=K$ in the block-cutvertex tree. (See Figure \ref{Block} for an illustration of these concepts.)
Let $[L,K] = D_1\cup \dots \cup D_k$. %Note that in the lemma below we do not need to assume that $G$ is redundantly rigid.

\begin{lemma}
\label{lem_btpathGR}
Suppose that $G$ is connected, P-connected and redundantly rigid and that $L,K$ are distinct leaf blocks of $G$. Then $[L,K]$ is globally $\mL$-rigid.
\end{lemma}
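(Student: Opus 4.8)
The plan is to exhibit inside $[L,K]$ a globally $\mL$-rigid ``skeleton'' subgraph $H$ of type $1$ or type $2$ that meets every block of $[L,K]$, and then to recover all of $[L,K]$ from $H$ by repeatedly adding open ears, each of which preserves generic global $\mL$-rigidity. Write $[L,K]=D_1\cup\dots\cup D_k$ with $L=D_1$, $K=D_k$ and cutvertices $v_i\in V(D_i)\cap V(D_{i+1})$ for $1\le i\le k-1$. First I would record that no leaf block of $G$ can be a $K_2$: if $L=K_2$ then its non-cutvertex end would be a degree-one vertex of $G$, so $G-v_1$ would have a single-vertex component lying on one line, contradicting that $G$ is P-connected (Lemma~\ref{lem_Pconn}). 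Hence $L$ and $K$ are genuinely $2$-connected and in particular contain cycles.

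The first and main step is to build the skeleton $H$, with one cycle inside $L$ and one inside $K$, and with every $D_i$ contributing an edge. Since $L$ is a $2$-connected leaf block, $L-v_1$ is a connected component of $G-v_1$ lying on at least two lines, so P-connectivity forces a crossing edge $d\in E(L-v_1)$, and $2$-connectivity of $L$ places $d$ on a cycle $C\subseteq L$. If $v_1\in V(C)$ I keep $C$; otherwise I attach a short path from $V(C)$ to $v_1$ that avoids one crossing edge of $C$. The existence of such a path is the crux of the argument: if every $V(C)$--$v_1$ path met every crossing edge of $C$, then $C$ would have exactly two crossing edges sharing a vertex $w$ through which all such paths pass, making $w$ a cutvertex of $L$ and contradicting that $L$ is a block. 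This yields a cycle-with-tail gadget in $L$ reaching $v_1$; I would do the same in $K$ reaching $v_{k-1}$, and for each intermediate block $D_i$ take a $v_{i-1}$--$v_i$ path (just the bridge edge when $D_i$ is a $K_2$). Their union $H$ is then a type $1$ graph (when $k=2$ and both cycles pass through the common cutvertex, so the two cycles meet in a single vertex) or a type $2$ graph (otherwise, with the cycles disjoint and joined by a path), and by construction every block of $[L,K]$ contains an edge of $H$.

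With $H$ in hand, Lemmas~\ref{lem_type1_GR} and~\ref{lem_type2_GR} give that $H$ is generically globally $\mL$-rigid. To finish I would rebuild $[L,K]$ from $H$ one block at a time. Within a $2$-connected block $D_i$ the subgraph $H\cap D_i$ has at least two vertices, so Lemma~\ref{lem_ear} provides an ear decomposition of $D_i$ starting from $H\cap D_i$; each ear has both endpoints already present (they lie in $H$, which is in the graph from the outset) and is internally disjoint from the current graph. Exactly as in the proof of Lemma~\ref{lem_2connGR}, adding an open ear is an edge addition followed by subdivisions, each of which preserves generic global $\mL$-rigidity by Theorem~\ref{thm_main_henn}; and since $H$ already contains crossing edges, the graph stays crossing throughout. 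Inducting on the total number of ears added over all blocks, the final graph $[L,K]$ is generically globally $\mL$-rigid.

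The principal obstacle is the combinatorial construction of $H$: guaranteeing that the crossing cycles in $L$ and $K$ can be routed to the relevant cutvertices while retaining a crossing edge off the connecting path, so that the type~$1$/type~$2$ conditions genuinely hold. This is precisely where $2$-connectivity of the leaf blocks enters, through the cutvertex argument above. By contrast the ear build-up is routine once the ear-addition principle of Lemma~\ref{lem_2connGR} is available, since the endpoints of every new ear always lie in $H$ and hence in the already-constructed globally rigid graph, so the order in which the blocks are processed is immaterial.
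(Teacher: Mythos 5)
Your proof is correct and follows essentially the same strategy as the paper's: build a type 1 or type 2 skeleton $H$ consisting of a crossing cycle in each of $L$ and $K$ joined through the intermediate blocks, invoke Lemmas~\ref{lem_type1_GR} and~\ref{lem_type2_GR}, and then grow $H$ to $[L,K]$ by open ears using Lemma~\ref{lem_ear} and Theorem~\ref{thm_main_henn}. If anything, your cycle-with-tail construction (routing the connecting path through the cutvertices and using the cutvertex argument to guarantee a crossing edge of each cycle avoids its attachment point) is more careful than the published proof, which simply takes an arbitrary path between $V(C_L)$ and $V(C_K)$ and asserts the type 1/type 2 property without controlling where that path attaches.
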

\begin{proof}
Suppose that $[L,K] = D_1 \cup \cdots \cup D_k$ where 
$L = D_1,\dots, D_k= K$ are blocks of $G$. Let $v_i = V(D_i) \cap V(D_{i+1})$, $ i =1,\dots, k-1$.
By redundant rigidity, $K$ and $L$ are maximal 2-connected subgraphs of $G$.
Since $G$ is P-connected there is a crossing edge 
$e$ in $L-v_1$.
Since $L$ is 2-connected, there is a cycle $C_L$ in $L$ such that $e \in E(C_L)$. 
Similarly, $K$ has a cycle $C_K$ that contains a crossing edge not incident to $v_{k-1}$.

By the connectivity of $G$, $G$ has a path $P$ with endvertices in
$V(C_L)$ and $V(C_K)$ such that $P$ is internally disjoint from $C_L$ and $C_K$.
Let $H=C_L\cup C_K\cup P$. 
Then $H$ is a type 1 or type 2 partitioned graph,
and hence it is generically globally $\mL$-rigid by Lemmas~\ref{lem_type1_GR} and \ref{lem_type2_GR}.

We now augment $H$ to $[L,K]$ by adding open ears sequentially. 
Specifically, for each $i=1,\dots, k-1$, 
$E(D_i)\cap E(H)\neq \emptyset$ holds by the definition of block-cutvertex forests.
Hence, by Lemma~\ref{lem_ear}, $D_i\cap H$ can be augmented to $D_i$ by adding open ears.
Therefore, $H$ can be augmented to $[L,K]$ by adding open ears.
Since the addition of an open ear preserves generic global $\mL$-rigidity by Theorem~\ref{thm_main_henn}, the global $\mL$-rigidity of $[L,K]$ follows from that of $H$.
\end{proof}

Finally, our main result. 

\begin{theorem}
\label{thm_main}
Let $\mL$ be a set of lines in general position.
A crossing partitioned graph $G$ is generically 
globally $\mL$-rigid if and only if $G$ is 
P-connected and redundantly $\mL$-rigid.
\end{theorem}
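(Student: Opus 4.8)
The forward direction (necessity) is exactly Theorem~\ref{thm_nec}, so the substance lies in the converse: assuming $G$ is a crossing partitioned graph that is P-connected and redundantly $\mL$-rigid, I would show that $G$ is generically globally $\mL$-rigid. The plan is to assemble this from the globally rigid building blocks of Section~\ref{sec:small} together with the gluing principle of Theorem~\ref{thm_gluing}, reducing first to connected graphs and then, within a single component, using a block decomposition.

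First I would reduce to the connected case. If $G$ is disconnected, then every connected component $H$ is a proper component, so P-connectivity forces $|\mL[H]|\geq 3$; in particular each $H$ is crossing. I would then check that P-connectivity and redundant $\mL$-rigidity descend to each component: the first P-connectivity condition is vacuous for a connected graph, while the components of $H-v$ are among the components of $G-v$, and (via Theorem~\ref{thm_rigidity_matroid}) the crossing cycles witnessing redundant rigidity of $G$ restrict to $H$. Granting the connected case, each component is generically globally $\mL$-rigid; since each has $|\mL|\geq 3$ and is crossing, I would glue the components one at a time using the empty-intersection clause of Theorem~\ref{thm_gluing}, concluding that $G$ is generically globally $\mL$-rigid.

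For a connected $G$ (crossing, P-connected, redundantly $\mL$-rigid) there are two cases. If $G$ is $2$-connected, then Lemma~\ref{lem_2connGR} applies directly. If $G$ is not $2$-connected, its block-cutvertex tree has at least two leaf blocks, since every leaf of that tree is a block (a cutvertex has degree at least two there). I would fix one leaf block $L_0$ and, for each other leaf block $K$, invoke Lemma~\ref{lem_btpathGR} to obtain that $[L_0,K]$ is generically globally $\mL$-rigid; each $[L_0,K]$ contains $L_0$, which carries a crossing edge by P-connectivity, so each $[L_0,K]$ is crossing. The key combinatorial observation is that $G=\bigcup_{K}[L_0,K]$: any block $D$ lies on the block-tree path from $L_0$ to some leaf block $K$ (extend the path from $L_0$ through $D$ to a leaf), so $D\subseteq[L_0,K]$. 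Since all the $[L_0,K]$ share the vertices of $L_0$, I would glue them successively via the non-empty-intersection clause of Theorem~\ref{thm_gluing}, obtaining that $G$ is generically globally $\mL$-rigid.

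The main point requiring care is the block-decomposition step: verifying that the subgraphs $[L_0,K]$ genuinely cover $G$, and that every gluing meets the hypotheses of Theorem~\ref{thm_gluing} (each piece crossing, and, in the disconnected reduction, each with $|\mL|\geq 3$). The covering claim rests on the elementary tree fact that every block lies between two leaf blocks, and the crossing and $|\mL|$ bookkeeping follows from P-connectivity. Once these are in place, the result is a direct assembly of Theorem~\ref{thm_nec}, Lemma~\ref{lem_2connGR}, Lemma~\ref{lem_btpathGR} and Theorem~\ref{thm_gluing}; there is essentially no new geometry at this stage, since the hard analytic work is already carried by Theorem~\ref{thm_main_henn} and the base-case lemmas of Section~\ref{sec:small}.
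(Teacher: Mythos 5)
Your proposal is correct and follows essentially the same route as the paper: necessity via Theorem~\ref{thm_nec}, the $2$-connected case via Lemma~\ref{lem_2conncase} and Lemma~\ref{lem_2connGR}, the connected case by covering $G$ with subgraphs $[L,K]$ between leaf blocks (Lemma~\ref{lem_btpathGR}) glued by Theorem~\ref{thm_gluing}, and the disconnected case by gluing components using $|\mL[H]|\geq 3$. Your only deviation is cosmetic but welcome: where the paper merely asserts the existence of a suitable family of leaf-block pairs covering $G$, you give the explicit star-shaped construction (fix $L_0$ and take $[L_0,K]$ over all other leaf blocks $K$) together with the tree argument showing every block lies on some such path.
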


\begin{proof}
Following Theorem~\ref{thm_nec}, it remains to show that if $G$ is P-connected and redundantly $\mL$-rigid then it is generically globally $\mL$-rigid. 

If $G$ is 2-connected then Lemma~\ref{lem_2connGR} gives the required conclusion. 
If $G$ is not 
2-connected but is connected, then there are leaf blocks $L_i,K_i$, $i =1,\dots,s$
such that $L_i \neq K_i$ for $i = 1,\dots,s$, $ G = \cup _{i=1}^s [L_i,K_i]$ and such that for $i =2,\dots,s$, $[L_i,K_i]$ shares a block with $[L_j,K_j]$ for some $j <i$. 
Now an easy induction argument using Lemma~\ref{lem_btpathGR} and Theorem~\ref{thm_gluing} shows that $G$ is generically globally $\mL$-rigid.

If $G$ is not connected, then the $P$-connectivity of $G$ implies that $|\mL[H]|\geq 3$ for each connected component $H$ of $G$. 
Hence, the generic global $\mL$-rigidity of $G$ follows by applying the induction hypothesis to each connected component and then applying Theorem~\ref{thm_gluing}.
\end{proof}

\begin{corollary}
Suppose that $(G,p)$ is a generic globally rigid $\mL$-framework. Then $G$ is generically globally $\mL$-rigid.
\end{corollary}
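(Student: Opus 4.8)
The plan is to recognise that this corollary is precisely the statement that our combinatorial characterisation makes global $\mL$-rigidity a generic property: the conditions that are \emph{necessary} can be extracted from a single generic globally rigid framework, while those same conditions are \emph{sufficient} to force \emph{every} generic framework to be globally rigid. Throughout, ``generic'' means $\mL$-generic and ``globally rigid'' means globally $\mL$-rigid. If $G$ is not crossing then all of its vertices lie on a single line and the statement reduces to the classical fact that generic global rigidity on $\mathbb R^1$ is equivalent to $2$-connectivity, which is visibly a generic property; so I would assume from the outset that $G$ is crossing.

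First I would apply the necessary-condition results of Section~\ref{sec:nec} directly to the given framework $(G,p)$. Since $p$ is $\mL$-generic and $(G,p)$ is globally $\mL$-rigid, Lemma~\ref{lem_Pconn0} and Lemma~\ref{lem_Pconn} together show that every proper component $H$ of $G$ satisfies $|\mL[H]|\geq 3$ and that every component $H'$ of $G-v$ satisfies $|\mL[H']|\geq 2$ for each $v\in\mV$; that is, $G$ is P-connected. The essential point is that each of these lemmas is stated for, and proved from, a \emph{single} generic globally rigid framework, so no quantification over all generic configurations is needed at this stage.

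For redundancy, I would invoke Theorem~\ref{thm:hendrickson}, which asserts that $(G-uv,p)$ is infinitesimally $\mL$-rigid for every edge $uv\in\mE$. Because the maximum rank of the $\mL$-rigidity matrix is attained at every $\mL$-generic point, infinitesimal $\mL$-rigidity of $(G-uv,p)$ at our particular $\mL$-generic $p$ upgrades to $\mL$-rigidity of the graph $G-uv$, as recorded in Section~\ref{sec:prelim}. Running this over all edges shows that $G$ is redundantly $\mL$-rigid.

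Finally I would feed these two combinatorial conclusions into the main characterisation, Theorem~\ref{thm_main}: since $G$ is a crossing partitioned graph that is both P-connected and redundantly $\mL$-rigid, it is generically globally $\mL$-rigid, i.e.\ $(G,q)$ is globally $\mL$-rigid for \emph{every} $\mL$-generic $q$. I do not expect a genuine obstacle here; the only point requiring care is to use the per-framework formulations of Lemmas~\ref{lem_Pconn0}, \ref{lem_Pconn} and Theorem~\ref{thm:hendrickson} rather than the aggregate statement of Theorem~\ref{thm_nec}, so that a single globally rigid generic instance suffices to certify the combinatorial conditions, after which Theorem~\ref{thm_main} transports global rigidity to all generic configurations.
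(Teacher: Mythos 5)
Your proposal is correct and takes essentially the same approach as the paper's own proof: it extracts P-connectedness from Lemmas~\ref{lem_Pconn0} and~\ref{lem_Pconn}, redundant $\mL$-rigidity from Theorem~\ref{thm:hendrickson}, and then concludes by the sufficiency direction of Theorem~\ref{thm_main}. The only difference is that you make explicit some details the paper leaves implicit (the upgrade from infinitesimal $\mL$-rigidity of $(G-uv,p)$ at the single generic $p$ to $\mL$-rigidity of the graph $G-uv$, and the treatment of the standing crossing assumption), which is sound.
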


\begin{proof}
By Lemma~\ref{lem_Pconn} and Theorem~\ref{thm:hendrickson}, $G$ is P-connected and redundantly $\mL$-rigid. The result follows from Theorem~\ref{thm_main}.
\end{proof}

\section{Concluding Remarks}

\noindent 1. Theorems \ref{thm:rigid} and \ref{thm_main} are good characterisations in the sense that they lead quickly to efficient deterministic algorithms to check $\mL$-rigidity and global $\mL$-rigidity. 
\newline

\noindent 2. It would be natural to consider extensions of our results where lines are replaced by (two-dimensional) planes. When $\mL$ is replaced by a set of parallel planes then characterising rigidity is a straightforward extension of the plane case (see e.g.~\cite[Theorem 5.1]{NOP}) and 
global rigidity is similarly easy. 
However allowing the planes to be non-parallel, even for generic planes, opens up substantial difficulties that arise in the standard bar-joint rigidity model in dimension at least 3. One fundamental such difficulty is the existence of `flexible circuits' in the $d$-dimensional rigidity matroid when $d\geq 3$ (see \cite{GGJN,JJbounded} inter alia). Perhaps the simplest non-trivial flexible circuit that can arise in the context under discussion is the graph obtained from two copies of $K_4$ sharing a single vertex $v$ such that $v$ is the only vertex on the second plane. Here the natural sparsity counts would predict rigidity but there is an obvious motion.
Similarly, extending the 2-dimensional global rigidity characterisation of Jackson and Jord{\'a}n~\cite{J&J} to non-parallel planes is a challenging open problem.
\newline

\noindent 3. A related generalisation of our results would be to allow one additional part in the partition of a partitioned graph, with this part corresponding to vertices that are not constrained to some line of $\mL$. To avoid well known problems in dimension at least 3, let us restrict to the case when $d=2$. When both the set of points and the set of lines are generic then rigidity \cite{ST} and global rigidity \cite{GJN} are understood. When the points are generic but the lines are allowed to be non-generic then a result of \cite{Katoh-Tanigawa} applies for rigidity. When more than two vertices are allowed on any given line then, even with generic lines, this seems to be an open but potentially tractable problem for both rigidity and global rigidity.
\newline

\noindent 4. It is possible to adapt Theorem \ref{thm:rigid} to apply to `circle constrained frameworks'; here the set of lines is replaced by a set of circles and the analogue of parallel is concentric. 
The intuition for this translation is that the constraint that a point $p(v)$ moves on a circle $C$ at the infinitesimal level is simply the constraint that $p(v)$ moves on the tangent line to $C$ at $p(v)$. However this observation does not provide any information about whether the global rigidity problem is equivalent in the circle constrained rigidity model. In \cite{GJN} an equivalence for global rigidity was deduced in dimension 2 in the special case where the set of points and lines is generic. It may also be interesting to extend this question to general curves.

\subsection*{Acknowledgements}
This project grew from discussions at the Fields Institute Thematic Program on Geometric Constraint Systems, Framework Rigidity, and Distance Geometry, and we are grateful to the organizers for bringing us together. 
F.M. was partially supported by the KU Leuven grant iBOF/23/064, the UiT Aurora project MASCOT, and the FWO grants
G0F5921N and G023721N. 
A.N. was partially supported by EPSRC grant number EP/W019698/1.
S.T. was partially supported by JST PRESTO Grant Number JPMJPR2126 and JSPS KAKENHI Grant Number 20H05961.	

\bibliographystyle{abbrv}

\begin{thebibliography}{40}

\bibitem{asi-rot} L. Asimow and B. Roth, The rigidity of graphs, Transactions of the American Mathematical Society, 245 (1978) 279--289.

\bibitem{basu_algo_07} S. Basu, R. Pollack and M.F. Roy, Algorithms in Real Algebraic Geometry, Algorithms and Computation in Mathematics, Springer Berlin Heidelberg (2007).

\bibitem{BCR13} J. Bochnak, M. Coste and M. F. Roy, Real algebraic geometry, Vol. 36, Springer Science \& Business Media (2013).



\bibitem{CJWbb} R. Connelly, T. Jord\'an, and W. Whiteley, Generic global rigidity of body-bar frameworks, J. Comb. Theory, B, 103 (2013) 689--705.

\bibitem{CGJN} J. Cruickshank, H. Guler, B. Jackson and A. Nixon, Rigidity of linearly constrained frameworks, International Mathematics Research Notices, 2020:12 (2020) 3824--3840.

\bibitem{CJTman} J. Cruickshank, B. Jackson and S. Tanigawa, Global Rigidity of Triangulated Manifolds, arXiv:2204.02503.

\bibitem{GHT} S. Gortler, A. Healy and D. Thurston, Characterizing generic global rigidity, American Journal of Mathematics, 132:4 (2010) 897--939.

\bibitem{GGJN} G. Grasegger, H. Guler, B. Jackson and A. Nixon, Flexible circuits in the $d$-dimensional rigidity matroid, Journal of Graph Theory, 100:2 (2022) 315--330.

\bibitem{GJN} H. Guler, B. Jackson and A. Nixon, Global rigidity of 2D linearly constrained frameworks, International Mathematics Research Notices, 22 (2021)  16811--16858.
 
\bibitem{hendrickson} B. Hendrickson, Conditions for unique graph realizations, SIAM Journal of Computing, 21:1 (1992) 65-84.

\bibitem{J&J} B. Jackson and T. Jord\'{a}n, Connected Rigidity Matroids and Unique Realisations of Graphs, Journal of Combinatorial Theory, Series B, 94 (2005) 1--29.

\bibitem{JJbounded} B.~Jackson and T.~Jord\'{a}n, The $d$-dimensional rigidity matroid of sparse graphs, Journal of Combinatorial Theory, Series B, 95 (2005) 118-133.

\bibitem{JJsensor} B. Jackson and T. Jord\'{a}n, Graph Theoretic Techniques in the Analysis of Uniquely Localizable Sensor Networks, In: G. Mao and and B. Fidan (Editors) Localization Algorithms and Strategies for Wireless Sensor Networks, IGI Global, 2009, 146--173.


\bibitem{Bill-Tony-Shinichi}
B. Jackson, A. Nixon and S. Tanigawa, An Improved bound for the rigidity of linearly constrained frameworks. SIAM Journal on Discrete Mathematics, 35:2 (2021) 928--933.

\bibitem{JKTbh} T. Jord\'an, C. Ki\'raly and S. Tanigawa, Generic global rigidity of body-hinge frameworks, Journal of Combinatorial Theory, Series B, 117 (2016) 59--76.

\bibitem{JW} T. Jord\'an and W. Whiteley,
Global rigidity,
Handbook of Discrete and Computational Geometry (third edition), CRC Press (2018), pp.~1661--1694.

\bibitem{Katoh-Tanigawa} N. Katoh, and S. Tanigawa, Rooted-tree decompositions with matroid constraints and the infinitesimal rigidity of frameworks with boundaries. SIAM Journal on Discrete Mathematics, 27:1 (2013) 155--185.

\bibitem{laman} G. Laman, On graphs and rigidity of plane skeletal structures, Journal of Engineering Mathematics, 4 (1970) 331--340.

\bibitem{NOP} A. Nixon, J.C. Owen, and S.C. Power, Rigidity of frameworks supported on surfaces, SIAM Journal on Discrete Mathematics 26:4 (2012) 1733--1757.


\bibitem{Saxe} J. Saxe, Embeddability of weighted graphs in k-space is strongly NP-hard, In Seventeenth Annual
Allerton Conference on Communication, Control, and Computing, Proceedings of the Conference held
in Monticello, Ill., October 10-12, 1979.

\bibitem{Sei} A. Seidenberg, A new decision method for elementary algebra, Annals of Mathematics 2:60 (1954) 365--374.

\bibitem{Spivak1} M. Spivak, A comprehensive introduction to differential geometry. Vol. I (second edition). Publish or Perish, Inc., Wilmington, Del. (1979).

\bibitem{ST} I. Streinu and L. Theran,
Slider-pinning rigidity: a Maxwell-Laman-type theorem,
Discrete \& Computational Geometry,
44:4 (2010) 812--837.

\bibitem{Tarski} A. Tarski, A Decision Method for Elementary Algebra and Geometry. In: Caviness, B.F., Johnson, J.R. (eds) Quantifier Elimination and Cylindrical Algebraic Decomposition, Texts and Monographs in Symbolic Computation. Springer, Vienna (1998).



\end{thebibliography}

\bigskip 

\noindent
\footnotesize {\bf Authors' addresses:}

\medskip

\noindent{Mathematics, Statistics \& Applied Mathematics, NUI Galway, Ireland
\hfill {\tt james.cruickshank@universityofgalway.ie}
}

\noindent{Department of Mathematics and Department of Computer Science, KU Leuven, Belgium 
\\
Department of Mathematics \& Statistics, 
  University of Troms\o, Norway} \hfill {\tt fatemeh.mohammadi@kuleuven.be}

\noindent{Department of Mathematics, Ghent University, Belgium}
  \hfill {\tt harshitjitendra.motwani@ugent.be}

\noindent{Mathematics and Statistics, 
Lancaster
University, Lancaster,
UK
\hfill {\tt a.nixon@lancaster.ac.uk}
}

\noindent{Department of Mathematical Informatics, University of Tokyo, Japan}  \hfill {\tt tanigawa@mist.i.u-tokyo.ac.jp}
\end{document}